\theoremstyle{plain}
\newtheorem{prop}{Proposition}[section]
\newtheorem{theorem}[prop]{Theorem}
\newtheorem{theoremintro}{Theorem}
\newtheorem{lemma}[prop]{Lemma}
\newtheorem{cor}[prop]{Corollary}
\theoremstyle{definition}
\newtheorem{definit}[prop]{Definition}
\newtheorem{parag}[prop]{}
\theoremstyle{remark}
\newtheorem{remark}[prop]{\textsc{Remark}}
\newtheorem{notat}[prop]{\textsc{Notation}}
\newcommand{\Art}{\text{Art}}
\newcommand{\bbP}{\mathbb{P}}
\newcommand{\bbQ}{\mathbb{Q}}
\newcommand{\bbZ}{\mathbb{Z}}
\newcommand{\bark}{\overline{k}}
\newcommand{\barphi}{\overline{\phi}}
\newcommand{\bartheta}{\overline{\theta}}
\newcommand{\barx}{\overline{x}}
\newcommand{\coh}{\text{coh}}
\newcommand{\age}{\text{age}}
\newcommand{\cl}{\text{cl}}
\newcommand{\et}{\text{\'et}}
\newcommand{\Hom}{\text{Hom}}
\newcommand{\id}{\text{id}}
\newcommand{\ob}{\text{ob}}
\newcommand{\gl}{\text{gl}}
\newcommand{\insieme}[2]{\left\{\,#1\,\middle|\, #2\,\right\}}
\newcommand{\st}{\text{st}}
\newcommand{\ua}{\underline{a}}
\newcommand{\calC}{\mathcal{C}}
\newcommand{\calM}{\mathcal{M}}
\newcommand{\calN}{\mathcal{N}}
\newcommand{\calR}{\mathcal{R}}
\newcommand{\calX}{\mathcal{X}}
\newcommand{\calY}{\mathcal{Y}}
\newcommand{\frakE}{\mathfrak{E}}
\newcommand{\frakm}{\mathfrak{m}}
\newcommand{\scrC}{\mathscr{C}}
\newcommand{\rank}{\mathfrak{r}}
\newcommand{\scrL}{\mathscr{L}}
\newcommand{\scrO}{\mathscr{O}}
\newcommand{\Sch}{\text{Sch}}
\newcommand{\virt}{\text{virt}}
\newcommand{\I}{\text{I}}
\newcommand{\inertia}{\overline{\mathcal{I}}_{\mu}(\mathcal{X})}
\newcommand{\Ignb}{\text{I}_{g,n,\beta}^{\mathcal{X}}}
\newcommand{\Ignbe}{\text{I}_{g,n,\beta_\eta}^{\mathcal{X}}}
\newcommand{\Inb}{\text{I}_{0,n,\beta}^{\mathcal{X}}}
\newcommand{\KgnXd}{{\mathcal{K}}_{g,n}(\sfrac{\mathcal{X}}{S}, d)}
\newcommand{\KgnX}{{\mathcal{K}}_{g,n}(\mathcal{X}, \beta)}
\newcommand{\KgnXb}{{\mathcal{K}}_{g,n}(\sfrac{\mathcal{X}}{S}, \beta)}
\newcommand{\KgnXbk}{{\mathcal{K}}_{g,n}(\sfrac{\mathcal{X}}{k}, \beta)}
\newcommand{\KgnXbe}{{\mathcal{K}}_{g,n}(\sfrac{\calX}{S}, \beta_\eta)}
\newcommand{\MgnStw}{\mathfrak{M}_{\sfrac{g,n}{S}}^{\text{tw}}}
\newcommand{\Mgnktw}{\mathfrak{M}_{\sfrac{g,n}{k}}^{\text{tw}}}
\newcommand{\Mgntw}{\mathfrak{M}_{g,n}^{\text{tw}}}
\newcommand{\mgnS}{\overline{\mathcal{M}}_{\sfrac{g,n}{S}}}
\newcommand{\mgnk}{\overline{\mathcal{M}}_{\sfrac{g,n}{k}}}
\renewcommand{\phi}{\varphi}
\DeclareMathOperator{\aut}{Aut}
\DeclareMathOperator{\rk}{rk}
\DeclareMathOperator{\spec}{Spec}
\author{Flavia Poma}
\title{Gromov-Witten theory of tame Deligne-Mumford stacks in mixed characteristic}
\date{\today\\
MSC classes: 14B35, 14H10, 14A20}
\begin{document}
\begin{abstract}
  We define Gromov-Witten classes and invariants of smooth proper tame
  Deligne-Mumford stacks of finite presentation over a Dedekind
  domain. We prove that they are deformation invariants and verify the
  fundamental axioms. For a smooth proper tame Deligne-Mumford stack
  over a Dedekind domain, we prove that the invariants of fibers in
  different characteristics are the same. We show that genus zero
  Gromov-Witten invariants define a potential which satisfies the WDVV
  equation and we deduce from this a reconstruction theorem for genus
  zero Gromov-Witten invariants in arbitrary characteristic.
\end{abstract}
\maketitle
\setcounter{tocdepth}{1}
\tableofcontents
\section{Introduction}
Gromov-Witten theory of orbifolds was introduced in the symplectic
setting in \cite{CR} and in the algebraic setting in \cite{AGV02} and
\cite{AGV08}, where Abramovich, Graber and Vistoli developed the
Gromov-Witten theory of Deligne-Mumford stacks in characteristic zero,
using the moduli stack of twisted stable maps into $\mathcal{X}$,
denoted by $\mathcal{K}_{g,n} (\mathcal{X}, \beta)$. This stack was
constructed in \cite{AV02} and it is the necessary analogue of
Kontsevich’s moduli stack of stable maps for smooth projective
varieties when replacing the variety with a Deligne-Mumford stack.

In this paper we define Gromov-Witten classes and invariants
associated to smooth proper tame Deligne-Mumford stacks of finite presentation over a
Dedekind domain. The main motivation for us is to compare the
invariants in different characteristics for stacks defined in mixed
characteristic. We hope that this approach could give a useful insight
into the Gromov-Witten theory in characteristic zero, providing a new
technique for computing Gromov-Witten invariants.

We consider a modified version, which we denote by $\KgnXbe$, of
Abramovich, Graber and Vistoli' stack of twisted stable maps. The
stack $\KgnXbe$ parametrizes twisted stable maps to $\calX$, but we
take $\beta_\eta$ to be a cycle class over the generic fiber
$\calX_\eta$ of $\calX$ rather than over $\calX$ itself
(section~\ref{section_stable}). This stack turns out to be more convenient
when we want to compare the Gromov-Witten invariants in mixed
characteristic.

The fundamental ingredient for the construction of Gromov-Witten
invariants is the virtual fundamental class ${[\mathcal{K}_{g,n}
  (\mathcal{X}, \beta)]}^{\virt}\in A_*(\mathcal{K}_{g,n}
(\mathcal{X}, \beta))$. In the language of \cite{BF}, a virtual
fundamental class ${[\mathcal{M}]}^\virt\in A_*(\mathcal{M})$ is
defined in the Chow group with rational coefficients, for a
Deligne-Mumford stack $\mathcal{M}$ endowed with a perfect obstruction
theory. The main problem in developing Gromov-Witten theory in positive or
mixed characteristic is that in general the stack $\KgnX$ is not
Deligne-Mumford. For istance this happens for
$\mathcal{K}_{0,0}(\bbP_k^1,p)$, when $k$ is a field of characteristic
$p > 0$, because the map $f \colon \bbP_k^1 \rightarrow \bbP_k^1$ such
that $f (x_0 , x_1) = (x_0^p, x_1^p)$ is stable but has
stabilizer \[\mu_p = \spec \sfrac{k[x]}{(x^p − 1)} = \spec
\sfrac{k[x]}{{(x − 1)}^p}\text{,}\] which is not reduced. When the
base is a field of characteristic $p>0$, then $\KgnX$ is still
Deligne-Mumford for certain values of the fixed discrete parameters
$g,n,\beta$ which are big with respect to $p$ (\cite{AGV02}). However,
this is not satisfactory from the point of view of Gromov-Witten
theory, because most of the properties of Gromov-Witten invariants
(e.g. WDVV equation, Getzler relations) involve all the invariants at
the same time.

The definition of virtual fundamental class for Artin stacks was not
feasible at the beginning because of the lack for Artin stacks of two
useful technical devices: Chow groups and the cotangent complex. We
now have these devices at our disposal.  Chow groups and intersection
theory for Artin stacks over a field are defined in \cite{kresch}. A
working theory for the cotangent complex of a morphism of Artin stacks
is provided by \cite{laumon}, \cite{olsson}, \cite{LO}. Nonetheless
the presence of these tools is not enough to overcome all the
difficulties in the absolute case. However, for the purpose of this
work, it is enough to define a relative version of the virtual
fundamental class of an Artin stack. The crucial point is to observe
that both Kresch's intersection theory and the construction of the
virtual fundamental class in \cite{BF} 7 generalize to Artin stacks
over a Dedekind domain. In section~\ref{section_virtualclass} we apply
this to the natural forgetful functor $\theta \colon \KgnX \rightarrow
\Mgntw$ into the stack of twisted curves $\Mgntw$ constructed in
\cite{AV02}, after we exhibited a perfect relative obstruction theory
for $\theta$, and we construct a virtual fundamental class
${[\KgnX]}^{\virt}\in A_*(\KgnX)$.

A Dedekind domain $D$ can be thought of as a space whose points
corresponds to fields of different characteristics; a Deligne-Mumford
stack $\mathcal{Y}$ over $D$ is a family of Deligne-Mumford stacks -
the fibers - each of which is defined over a point of $D$. We prove
the following result, provinding a comparison between invariants in
different characteristics (section~\ref{section_GW}).
\begin{theoremintro}
  Let $\mathcal{Y}$ be a smooth proper tame Deligne-Mumford stack of finite
  presentation over a Dedekind domain $D$. Then the Gromov-Witten
  theories of the geometric fibers of $\mathcal{Y}$ are equivalent (i.e., the
  Gromov-Witten invariants of the fibers are the same).
\end{theoremintro}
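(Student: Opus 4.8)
The plan is to reduce the comparison of Gromov-Witten invariants of two geometric fibers to the fact that they are both specializations (or generizations) of the invariants of a fiber over the generic point, using the deformation invariance of the virtual fundamental class established earlier. First I would observe that it suffices to treat the case where $D$ is a discrete valuation ring: any two points of $\spec D$ can be connected by a chain of specializations through codimension-one points, and invariants depend only on the fiber, so we may localize $D$ at a height-one prime and pass to its completion, reducing to the case $D$ a complete DVR with generic point $\eta$ and closed point $s$. The key geometric input is the modified moduli stack $\KgnXbe$, which parametrizes twisted stable maps to $\calX$ with the curve class $\beta_\eta$ taken on the generic fiber: by its very construction this stack is flat and proper over $D$, its formation commutes with base change to the fibers, and it carries the relative perfect obstruction theory over $\Mgntw$ together with the virtual class $[\KgnXbe]^{\virt}$ constructed in section~\ref{section_virtualclass}.

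The heart of the argument is then a specialization statement for virtual classes: if $\calM \to \spec D$ is flat and proper with a relative perfect obstruction theory over a smooth base (here $\Mgntw$), then the virtual class of the special fiber is the specialization (Gysin pullback along $s \hookrightarrow \spec D$) of the virtual class of the total space, and likewise for the generic fiber. Concretely, I would invoke the compatibility of the intrinsic normal cone and of Kresch's refined Gysin maps with the regular embeddings $\eta \hookrightarrow \spec D$ and $s \hookrightarrow \spec D$ — this is exactly the content of the deformation-invariance results the paper says generalize from \cite{BF} to the Dedekind setting — to get
\begin{equation*}
  [\KgnXbek]^{\virt} = s^{!}[\KgnXbe]^{\virt}, \qquad
  [\KgnXbek]^{\virt}|_{\bar\eta} \text{ analogously over the generic fiber.}
\end{equation*}
Since the evaluation maps $\mathrm{ev}_i$ and the cotangent line classes $\psi_i$ on $\KgnXbe$ also restrict compatibly to the fibers, and Gromov-Witten classes/invariants are defined by capping products of such classes against the virtual class and pushing forward, the commutation of proper pushforward and refined Gysin pullback gives that the invariants of the special fiber equal those read off from the total space, and the same for the generic fiber. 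Hence the invariants of $s$ and of $\eta$ agree; chaining over all height-one primes yields equality for every pair of geometric fibers.

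Two technical points deserve care. First, one must ensure that the cycle classes match up across characteristics: this is precisely why $\beta_\eta$ is fixed on the generic fiber — its specialization determines the relevant class on every special fiber, so the discrete data indexing the invariants is transported consistently, and the disjoint-union decomposition of $\KgnXbe$ by components is respected by the Gysin maps. Second, passing to \emph{geometric} fibers requires checking that base change to an algebraic closure of the residue field does not disturb virtual classes, evaluation maps, or the numerical invariants; this follows from flat base change for Chow groups in Kresch's theory together with the compatibility of the obstruction theory with field extensions, but it is the step I expect to require the most bookkeeping. The main obstacle, though, is the specialization identity for the virtual class itself: everything hinges on the bivariant/refined-Gysin formalism of \cite{kresch} interacting correctly with \cite{BF}'s construction over the non-field base $D$, i.e.\ on the claim — asserted in the introduction and to be proved in section~\ref{section_virtualclass} — that the normal-cone construction and its intersection with the zero section behave well relative to $\spec D$; once that is in hand, the present theorem is a formal consequence.
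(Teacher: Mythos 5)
Your proposal follows essentially the same route as the paper: localize and complete $D$ at each closed point to reduce to a complete discrete valuation ring, use the specialization homomorphism on Chow groups (\cite{fulton} 20.3.5, \cite{kresch}) together with the functoriality of the virtual fundamental class (\cite{BF} 7.2) to identify the virtual class of each geometric special fiber with the specialization of that of the geometric generic fiber, and then compare the two closed fibers through the common generic fiber. The one ingredient you elide is how the insertion classes on the two geometric fibers --- which live over fields of different characteristics, with no morphism between them --- are identified: the paper does this via the cospecialization isomorphism $H^*(\overline{\mathcal{I}}_\mu(\overline{\calY}_0))\cong H^*(\overline{\mathcal{I}}_\mu(\overline{\calY}_1))$ in $l$-adic \'etale cohomology (smooth and proper base change, \cite{milne} VI.4.1), compatible with the evaluation maps, and your phrase ``restrict compatibly to the fibers'' silently presupposes exactly this identification, without which the asserted equality of invariants does not even typecheck.
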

When the base is an algebraically closed field $k$, we
prove that Gromov-Witten invariants define an associative and
supercommutative product on the quantum cohomology ring
\[H_\st^*(\mathcal{X})= \sum_r H^r(\inertia,
\bbQ_l(\overline{r}))\text{,}\] where the right hand side is the
$l$-adic \'etale cohomology, for a prime $l$ different from the
characteristic of $k$, of the rigidified
ciclotomic inertia stack $\inertia$ (section~\ref{section_genus0}).
\subsection*{Future work}
A natural generalization would be to develop a Gromov-Witten theory
for tame Artin stacks, using the moduli stack of twisted stable maps
constructed in \cite{AOV11}. The main problem is that the natural
forgetful functor $\theta \colon \KgnX \rightarrow \Mgntw$ is not of
Deligne-Mumford type in general, and therefore the relative cotangent
complex of $\theta$ has three terms, so that one cannot use the
construction described in \ref{section_virtualclass}.

In another direction, it would be interesting to prove a degeneration
formula in the mixed characteristic setting. This would give a useful
tool to compute Gromov-Witten invariants of Deligne-Mumford stacks in
characteristic zero out of \emph{simpler} invariants of tame
Deligne-Mumford stacks in positive characteristic. For istance, this
would apply to the fake projective plane constructed by Mumford in
\cite{mumford} using $p$-adic uniformization. We imagine this is far
from easy, but we hope to return to these points in a future paper.
\subsection*{Acknowledgements}
I am grateful to my advisor, Angelo Vistoli for his support and
valuable conversations and suggestions. I would like to thank my
internal advisor, Barbara Fantechi, for introducing me to the problem
presented here and for helpful discussions.  I learned a lot during a
three mounths stay at Stanford University. I would like to thank
Prof. Jun Li and Prof. Ravi Vakil for their hospitality. Grateful
thanks are extended for the wonderful work enviroment. I am indebted
to Scuola Normale Superiore and Dipartimento di Matematica of
University of Pisa for their hospitality during a eight months stay;
part of this work (including the generalization of results to
Deligne-Mumford stacks) was done during this period. I would like to
thank Prof. Yuri Manin and Prof. Richard Thomas for valuable comments
and suggestions of further applications of the results presented here.
A special thank is due to Stefano Maggiolo and Fabio Tonini for
providing
{\href{http://people.sissa.it/~maggiolo/software/commu.php}{{\tt
      Commu}}}, a software to draw commutative diagrams in \LaTeX,
which has been of great help during the drafting of this paper.  I was
supported by \textsc{sissa}, \textsc{in}d\textsc{am} and
\textsc{gnsaga}, \textsc{miur}.
\subsection*{Notations} We write $(\sfrac{\Sch}{S})$ for the category
of schemes over a base scheme $S$. For a scheme $X\in
(\sfrac{\Sch}{S})$, we denote by $A_*(\sfrac{X}{S})$ the group of
numerical equivalence classes of cycles. All stacks are Artin stacks
in the sense of \cite{artin}, \cite{laumon} and are of finite type
over a base scheme. Unless otherwise specified, the words "stack of
twisted stable maps" refer to $\KgnXbe$ in
Definition~\ref{def_stmaps}. We recall that a Deligne-Mumford stack
$\calX$ is tame if for every morphism $\barx \colon \spec k
\rightarrow \calX$, with $k$ algebraically closed, the stabilizer
group of $\barx$ in $\calX$ has order invertible in $k$.
\section{The stack of twisted stable maps}\label{section_stable}
Let $D$ be a Dedekind domain and set $S=\spec D$. Let $\calX$ be a
proper tame Deligne-Mumford stack of finite presentation over $S$,
admitting a projective coarse moduli scheme $X$. We fix an ample
invertible sheaf $\scrO(1)$ on $X$. We fix integers $g\geq 0$, $n\geq
0$. Let $\eta$ be the generic point of $S$ and set $X_\eta= X \times_S
\eta$. Fix $\beta_\eta\in A_1(\sfrac{X_\eta}{\eta})$.
\subsection{Twisted curves and twisted stable maps}
For any closed point $s\in S$, we denote by $X_s$ the fiber over
$s$. Let $\frakm_s\subset D$ be the maximal ideal corresponding to $s$
and consider the localization $R=D_{\frakm_s}$ of $D$ at
$\frakm_s$. Let us set $\widetilde{X}_s=X \times_S\spec R$ and let
$X_s \xrightarrow{i} X$ and $X_\eta \xrightarrow{j} X$ be the natural
inclusions. Notice that $R$ is a dicrete valuation ring and, by
\cite{fulton} 20.3, there exists a specialization
homomorphism \[\sigma_s \colon A_*(\sfrac{X_\eta}{\eta})\rightarrow
A_*(\sfrac{X_s}{s})\text{,}\] sending a cycle $\alpha$ to
$i^!\widetilde{\alpha}$, for some $\widetilde{\alpha}\in
A_*(\sfrac{\widetilde{X}_s}{R})$ such that
$j^*\widetilde{\alpha}=\alpha$. By \cite{fulton} 20.3.5, there exists
an induced specialization homomorphism \[\overline{\sigma_s} \colon
A_*(\sfrac{X_{\overline{\eta}}}{\overline{\eta}})\rightarrow
A_*(\sfrac{X_{\overline{s}}}{\overline{s}})\text{,}\] where
$\overline{\eta}$ and $\overline{s}$ are geometric points over $\eta$
and $s$. We denote by $\overline{\beta_\eta}\in
A_1(\sfrac{X_{\overline{\eta}}}{\overline{\eta}})$ the cycle class
induced by $\beta_\eta$ and we notice that
$\overline{\sigma_s}\left(\overline{\beta_\eta}\right)=
\overline{\sigma_s(\beta_\eta)}$.
\begin{definit}
  Let $T$ be a scheme over $S$. A \emph{stable $n$-pointed map of genus $g$ and class $\beta_\eta$
    into $X$} is the data $(C\xrightarrow{\pi} T, t_i, f)$, where
\begin{enumerate}
\item the morphism $\pi$ is a projective flat family of curves;
\item the geometric fibers of $\pi$ are reduced with at most nodes as
  singularities;
\item the sheaf $\pi_*\omega_{\sfrac{C}{T}}$ is locally free of rank g
  (where $\omega_{\sfrac{C}{T}}$ is the relative dualizing sheaf);
\item the morphisms $t_1, \ldots, t_n$ are sections of $\pi$ which are
  disjoint and land in the smooth locus of $\pi$;
\item $f \colon C \rightarrow X$ is a morphism of $S$-schemes;
\item the group scheme $\aut(C,f,\pi,t_i)$ of automorphisms of $C$,
  which commute with $f$, $\pi$ and $t_i$, is finite over $T$;
\item for every geometric point $\overline{t}\rightarrow T$, we
  consider the following induced morphisms \[C_{\overline{t}}=C
  \times_T \overline{t} \xrightarrow{f_{\overline{t}}}
  X_{\overline{t}}=X \times_S \overline{t}\xrightarrow{\tau}
  X_{\overline{s}}=X \times_S \overline{s}\rightarrow X_s=X \times_S
  s\xrightarrow{i}X\text{,}\] where $s=\spec k\in S$ is the image of
  $\overline{t}$ and $\overline{s}=\spec \bark$, with $\bark$ a
  separable closure of $k$, then we have
  ${f_{\overline{t}}}_*[C_{\overline{t}}]=
  \tau^*\overline{\sigma_s}\left(\overline{\beta_\eta}\right)$.
\end{enumerate}
\end{definit}
\begin{remark}
  Notice that a stable map of class $\beta_{\eta}$ is a stable map of
  class $\beta$ (in the sense of \cite{AV02} 4.3.1) for some $\beta\in
  A_1(\sfrac{X}{S})$ such that $j^*\beta=\beta_\eta$.
\end{remark}
\begin{definit}
  Let $T$ be a scheme over $S$. A \emph{twisted stable $n$-pointed map of genus $g$ and class
    $\beta_\eta$ into $\calX$} over $T$ is the data $(\calC \rightarrow T,
  {\{\Sigma_i^\calC\}}_{i=1}^n, f \colon \calC \rightarrow \calX)$
 where
\begin{enumerate}
\item the following natural diagram is commutative
  \[
  \begin{tikzpicture}[xscale=1.5,yscale=-1.2]
    \node (A0_0) at (0, 0) {$\calC$};
    \node (A0_1) at (1, 0) {$\calX$};
    \node (A1_0) at (0, 1) {$C$};
    \node (A1_1) at (1, 1) {$X$};
    \path (A0_0) edge [->]node [auto] {$\scriptstyle{f}$} (A0_1);
    \path (A1_0) edge [->]node [auto] {$\scriptstyle{f}$} (A1_1);
    \path (A0_1) edge [->]node [auto] {$\scriptstyle{}$} (A1_1);
    \path (A0_0) edge [->]node [auto] {$\scriptstyle{}$} (A1_0);
  \end{tikzpicture}
  \]
\item $(\calC \rightarrow T, {\{\Sigma_i^\calC\}}_{i=1}^n)$ is a twisted
  nodal $n$-pointed curve of genus $g$ over $T$;
\item the morphism $\calC \rightarrow \calX$ is representable;
\item let $\Sigma_i^C$ be the image of $\Sigma_i^\calC$ in $C$, then
  $(C → T, {\{\Sigma_i^C\}}_{i=1}^n, f \colon C \rightarrow X)$ is a
  stable $n$-pointed map of class $\beta_\eta$.
\end{enumerate}
\end{definit}
\begin{definit}\label{def_stmaps}
  We denote by $\KgnXbe$ the category fibered in groupoids over
  $(\sfrac{\Sch}{S})$ of twisted stable $n$-pointed maps of genus $g$ and
  class $\beta_\eta$ into $\calX$.
\end{definit}
\begin{theorem}\label{theorem_kbe}
  The category $\KgnXbe$ is a proper Artin stack over $S$, admitting a
  projective coarse moduli scheme
  ${K}_{g,n}(\sfrac{\calX}{S},\beta_\eta)\rightarrow S$.
\end{theorem}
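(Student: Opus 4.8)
The plan is to realize $\KgnXbe$ as an open and closed substack of the Abramovich--Vistoli stack of twisted stable maps of \emph{fixed coarse degree}, for which the analogue of the theorem is available, and to transport properness and projectivity of the coarse moduli space along this identification. Write $d=\deg_{\scrO(1)}\beta_\eta$ and let $\KgnXd$ be the stack of twisted stable $n$-pointed maps of genus $g$ into $\calX$ whose associated map of coarse curves $C\to X$ has $\scrO(1)$-degree $d$. By \cite{AV02} (cf.\ also \cite{AGV02}) this is a proper Artin stack over $S$ admitting a projective coarse moduli scheme: the hypotheses imposed in Section~\ref{section_stable} --- $\calX$ proper, tame, of finite presentation over $S$, with projective coarse space $X$ carrying an ample $\scrO(1)$ --- are precisely those needed for that construction, tameness being essential so that the moduli stack stays Deligne--Mumford along fibers of bad residue characteristic; if $D$ is not Noetherian one first spreads $\calX$ out over a finitely generated subring and invokes the Noetherian case by a limit argument, using finite presentation. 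Passing from a twisted stable map to a twisted stable map of class $\beta_\eta$ imposes only condition~(7) of the definition and adds no extra data, so $\KgnXbe$ is a full subcategory of $\KgnXd$ stable under pullback, hence a substack: indeed, since for $\overline{t}$ over a closed point $s$ the target class $\tau^*\overline{\sigma_s}(\overline{\beta_\eta})=\overline{\sigma_s(\beta_\eta)}$ is the specialization of $\overline{\beta_\eta}$, and since intersecting with $\scrO(1)$ --- a class pulled back from $X$ and so spread over the discrete valuation ring $R$ --- commutes with the specialization homomorphism (\cite{fulton} 20.3), a map of class $\beta_\eta$ has coarse degree exactly $d$ on every geometric fiber, so it is indeed an object of $\KgnXd$.

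To see that $\KgnXbe$ is open and closed in $\KgnXd$ it suffices to show that, for a family of twisted stable maps over a connected scheme $T\to S$, condition~(7) holds either at every geometric point of $T$ or at none. This holds because the cycle class ${f_{\overline{t}}}_*[C_{\overline{t}}]$ is locally constant in families in the appropriate sense: on a connected $T$ lying over $\eta$ it is constant, on a connected $T$ lying over a single closed point $s$ it is constant, and if $T$ dominates $S$ then for $\overline{t}$ over a closed point $s$ the class ${f_{\overline{t}}}_*[C_{\overline{t}}]$ is the image under $\overline{\sigma_s}$ of its value at a geometric point over $\eta$, because proper pushforward of cycles commutes with the specialization homomorphism (\cite{fulton} 20.3). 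Combining this with the identity $\overline{\sigma_s}(\overline{\beta_\eta})=\overline{\sigma_s(\beta_\eta)}$ recorded in Section~\ref{section_stable}, condition~(7) is constant on each connected component of $\KgnXd$. Hence $\KgnXbe$ is a union of connected components of $\KgnXd$; equivalently, by the Remark following the definition, it is the disjoint union of the substacks $\mathcal{K}_{g,n}(\calX,\beta)$ over the (necessarily finitely many, since $\KgnXd$ is of finite type) classes $\beta\in A_1(\sfrac{X}{S})$ with $j^*\beta=\beta_\eta$ for which this substack is nonempty.

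Being an open and closed substack of a proper Artin stack of finite presentation over $S$, $\KgnXbe$ is itself a proper Artin stack of finite presentation over $S$, and its coarse moduli scheme is the open and closed subscheme of $K_{g,n}(\sfrac{\calX}{S},d)$ consisting of the corresponding components, which is quasi-projective over $S$ (open in a projective $S$-scheme) and proper over $S$ (closed in it), hence projective over $S$; this is the asserted $K_{g,n}(\sfrac{\calX}{S},\beta_\eta)$. The formal assembly here is routine; the two points that genuinely require care --- and which I expect to be the real work --- are, first, confirming that the \cite{AV02} construction goes through over a Dedekind base (this is exactly where the tameness of $\calX$ is indispensable, since the stack of twisted stable maps into a non-tame target can fail to be Deligne--Mumford along fibers in bad characteristic), and, second, the compatibility of proper pushforward of algebraic cycles with the specialization homomorphisms $\sigma_s$, which is what turns condition~(7) into an open and closed condition \emph{across} fibers of different residue characteristic rather than merely within equal-characteristic families.
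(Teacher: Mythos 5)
Your proposal is correct and follows essentially the same route as the paper: both realize $\KgnXbe$ as an open and closed substack of $\KgnXd$ and then invoke \cite{AV02} 1.4.1 to transport properness and projectivity of the coarse space. The only cosmetic difference is that the paper verifies open-and-closedness purely formally, by writing both $\KgnXbe$ and its complement in $\KgnXd$ as disjoint unions of the substacks $\KgnXb$ (each open by \cite{AV02} 1.4.1), whereas you additionally spell out the underlying geometric reason --- compatibility of proper pushforward of cycles with the specialization homomorphisms --- which the paper leaves implicit in the Remark following the definition.
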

\begin{proof}
  Let $d=\deg \beta_\eta$. It is enough to show that $\KgnXbe$ is an
  open and closed substack of $\KgnXd$ and then apply \cite{AV02}
  1.4.1. Notice that $\KgnXbe = \bigsqcup \KgnXb$, where the union is
  over $\beta \in A_1(\sfrac{X}{S})$ such that $j^*\beta =
  \beta_\eta$. By \cite{AV02} 1.4.1, $\KgnXbe$ is an open substack of
  $\KgnXd$, because it is a union of open substacks. On the other hand
  $\KgnXd \setminus \KgnXbe = \bigsqcup \KgnXb$ is open, where the
  union is over $\beta \in A_1(\sfrac{X}{S})$ such that $\deg
  \beta=d$, $j^*\beta \neq \beta_\eta$. It follows that $\KgnXbe$ is a
  closed substack of $\KgnXd$.
\end{proof}
\begin{parag}
We denote by $\MgnStw$ the stack of twisted $n$-pointed curves of
genus $g$ as defined in \cite{AV02} 4.1.2. Recall that $\MgnStw$ is a
smooth Artin stack, locally of finite type over $S$. Moreover, the
stack $\mathfrak{M}_{\sfrac{g,n}{S}}^{\text{tw}, \leq N, \Gamma}$,
classifying twisted curves $(\calC, \{\Sigma_i^\calC\})$ such that the
order of the stabilizer group at every point is at most $N$ and the
coarse space $C$ of $\calC$ has dual graph $\Gamma$, is a smooth Artin
stack of finite type over $S$ (\cite{O02} 1.9--1.12).
\end{parag}
\begin{definit}\label{def_morphisms}
  Let $\calC\rightarrow \MgnStw$ be the universal twisted nodal curve. We define the algebraic
  stack $\underline{\Hom}_{\Mgntw}(\calC,\calX)$ over $\MgnStw$ as follows
\begin{enumerate}
\item for every $S$-scheme $T$, an object of $\underline{\Hom}_{\Mgntw}(\calC, \calX)(T)$ is a
  twisted pointed curve $(\calC_T \rightarrow T, {\{\Sigma_i\}}_{i=1}^n)$ over $T$
  together with a representable morphism of $S$-stacks $f \colon \calC_T \rightarrow
  \calX$;
\item a morphism from $(\calC_T \rightarrow T, {\{\Sigma_i^\calC\}},
  f)$ to $(\calC_{T'} \rightarrow T', {\{\Sigma_i'\}}, f')$ consists
  of data $(F, \alpha)$, where $F \colon \calC_T \rightarrow
  \calC_{T'}$ is a morphism of twisted curves and $\alpha
  \colon f \rightarrow f' \circ F$ is an isomorphism.
\end{enumerate}
\end{definit}
\begin{remark}
There is a canonical functor $\bartheta \colon \underline{\Hom}_{\Mgntw}(\calC, \calX)
\rightarrow \MgnStw$ which forgets the map into $\calX$. Moreover, since
stability is an open condition, the stack $\KgnXbe$ is an open
substack of $\underline{\Hom}_{\Mgntw}(\calC, \calX)$.
\end{remark}
\begin{prop}
The natural forgetful functor \[\theta \colon \KgnXbe \rightarrow
\MgnStw\] which forgets the morphism into $\calX$ is of Deligne-Mumford type. 
\end{prop}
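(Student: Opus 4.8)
The plan is to show that the relative diagonal $\Delta_{\theta}\colon\KgnXbe\to\KgnXbe\times_{\MgnStw}\KgnXbe$ is unramified; since $\theta$ is locally of finite type, this is equivalent to every geometric fibre of $\theta$ being a Deligne--Mumford stack, i.e.\ to the automorphism group scheme of each geometric point of each fibre being unramified over the residue field. As $\KgnXbe$ is an open substack of $\underline{\Hom}_{\Mgntw}(\calC,\calX)$, it is enough to check this for the structure morphism $\bartheta\colon\underline{\Hom}_{\Mgntw}(\calC,\calX)\to\MgnStw$. A geometric point $\spec\bark\to\MgnStw$ is a twisted curve $(\calC_0,\{\Sigma_i\})$ over $\bark$ with structure morphism $\pi\colon\calC_0\to\spec\bark$; a geometric point of the fibre of $\bartheta$ over it is a representable morphism $f_0\colon\calC_0\to\calX_{\bark}$, and unwinding the $2$-categorical fibre product shows that the automorphism group scheme of $[f_0]$ in that fibre is the group scheme $\underline{\aut}(f_0)$ of $2$-automorphisms of $f_0$. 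Writing $\mathcal{I}_{\calX_{\bark}}\to\calX_{\bark}$ for the inertia stack of $\calX_{\bark}$, base change of the inertia identifies $\underline{\aut}(f_0)$ with $\pi_{*}\bigl(f_0^{*}\mathcal{I}_{\calX_{\bark}}\bigr)$, the group scheme of sections along $\pi$ of $f_0^{*}\mathcal{I}_{\calX_{\bark}}\to\calC_0$.

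I would then show that $\underline{\aut}(f_0)$ is \'etale over $\bark$. Finiteness is easy: $\calX$ is proper, hence separated, over $S$, so the diagonal of $\calX_{\bark}$ is finite, hence $\mathcal{I}_{\calX_{\bark}}\to\calX_{\bark}$ is finite; pulling back along $f_0$, the scheme $f_0^{*}\mathcal{I}_{\calX_{\bark}}$ is finite over the proper stack $\calC_0$, so $\pi_{*}\bigl(f_0^{*}\mathcal{I}_{\calX_{\bark}}\bigr)$ is finite over $\bark$. For reducedness it suffices, $\bark$ being algebraically closed, to show that $\underline{\aut}(f_0)$ has trivial Lie algebra, i.e.\ that $f_0$ admits no non-trivial infinitesimal $2$-automorphism. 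This is the one place the Deligne--Mumford (in our setting, tame) hypothesis on $\calX$ is used: the stabilizers of the geometric points of $\calX_{\bark}$ are finite \'etale, so the Lie algebra sheaf of the group scheme $\mathcal{I}_{\calX_{\bark}}\to\calX_{\bark}$, i.e.\ $h^{-1}(T_{\calX_{\bark}/\bark})$, vanishes; deformation theory of sections of $\mathcal{I}_{\calX_{\bark}}\to\calX_{\bark}$ then identifies the infinitesimal $2$-automorphisms of $f_0$ with $H^{0}$ of the pullback of this sheaf along $f_0$, which is $0$. A finite group scheme over $\bark$ with trivial Lie algebra is \'etale; so $\underline{\aut}(f_0)$ is \'etale, every geometric fibre of $\theta$ is Deligne--Mumford, and $\theta$ is of Deligne--Mumford type.

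The step I expect to be the main obstacle is the vanishing of infinitesimal $2$-automorphisms. The cleanest packaging is via the relative tangent complex: since $\underline{\Hom}_{\Mgntw}(\calC,\calX)\to\MgnStw$ is a $\Hom$-stack, its relative tangent complex at $[f_0]$ is $R\pi_{*}\bigl(f_0^{*}T_{\calX_{\bark}/\bark}\bigr)$, and because $\calX_{\bark}$ is Deligne--Mumford the complex $T_{\calX_{\bark}/\bark}$ sits in non-negative degrees, so this complex has no cohomology in degree $-1$ --- and degree $-1$ cohomology is exactly what controls automorphisms in the fibre. A secondary point to check carefully is the $2$-categorical bookkeeping in the identification of the fibre automorphism groups with $2$-automorphisms of $f_0$; I would also note that representability of $f_0$ forces $\calC_0$ to be a tame twisted curve (its stabilizers embed into the \'etale stabilizers of $\calX_{\bark}$), so no extra hypothesis on the source curve is needed.
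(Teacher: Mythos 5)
Your proof is correct, but it takes a genuinely different route from the paper's. The paper reduces to the Hom-stack exactly as you do, via the open immersion $\KgnXbe \hookrightarrow \underline{\Hom}_{\Mgntw}(\calC,\calX)$, but then disposes of the question in one line: after base change along any $U \to \MgnStw$ the fiber of $\bartheta$ is $\underline{\Hom}_{U}(\calC_U,\calX)$, which is Deligne--Mumford by Olsson's theorem on Hom-stacks (\cite{Ol} 1.1), since $\calC_U$ and $\calX$ are Deligne--Mumford. You instead unwind the criterion (unramified relative diagonal, equivalently \'etale automorphism group schemes in the geometric fibers) and verify it directly, using that the inertia of a Deligne--Mumford stack is unramified over the stack, so that the sections group scheme $\pi_*\bigl(f_0^*\mathcal{I}(\calX_{\bark})\bigr)$ has trivial Lie algebra. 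This amounts to a direct proof of the relevant part of Olsson's theorem; its virtue is that it isolates exactly where the hypothesis on $\calX$ enters, and it makes transparent why the analogous statement fails for tame \emph{Artin} stacks (as noted in the paper's introduction): there the inertia is no longer unramified and the fibers acquire infinitesimal $2$-automorphisms. Three small remarks. First, tameness plays no role here, only the Deligne--Mumford property, so your parenthetical ``in our setting, tame'' is a red herring. Second, the finiteness of $\pi_*\bigl(f_0^*\mathcal{I}(\calX_{\bark})\bigr)$ is asserted rather quickly, but it is also not needed: a group scheme locally of finite type over a field with trivial Lie algebra has vanishing sheaf of differentials, hence is unramified and therefore \'etale. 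Third, your argument presupposes that the Hom-stack is already an algebraic stack locally of finite presentation over $\MgnStw$; the paper presupposes this too (Definition~\ref{def_morphisms}), and the citation of \cite{Ol} covers it, whereas your argument does not address it.
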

\begin{proof}
  Let $U\rightarrow \MgnStw$ be a morphism from a scheme $U$ over $S$
  and let us denote $\calC_U=\calC \times_{\Mgntw}U$ the corresponding
  twisted pointed curve over $U$. Form the fiber diagram
  \[
  \begin{tikzpicture}[xscale=2.2,yscale=-1.6]
    \node (A0_0) at (0, 0) {$V$};
    \node (A0_1) at (1, 0) {$\underline{\Hom}_{\Mgntw}(\calC, \calX)$};
    \node (A1_0) at (0, 1) {$U$};
    \node (A1_1) at (1, 1) {$\MgnStw$};
    \path (A0_0) edge [->]node [auto] {$\scriptstyle{}$} (A0_1);
    \path (A1_0) edge [->]node [auto] {$\scriptstyle{}$} (A1_1);
    \path (A0_1) edge [->]node [auto] {$\scriptstyle{\bartheta}$} (A1_1);
    \path (A0_0) edge [->]node [auto] {$\scriptstyle{}$} (A1_0);
  \end{tikzpicture}
  \]
  and notice that $V=\underline{\Hom}_{U}(\calC_U, \calX)$. Since
  $\calC_U$ and $\calX$ are Deligne-Mumford stacks it follows, by
  \cite{Ol} 1.1, that $V$ is a Deligne-Mumford stack and hence
  $\bartheta$ is of Deligne-Mumford type. The statement follows from
  the fact that $\KgnXbe$ is an open substack of
  $\underline{\Hom}_{\Mgntw}(\calC, \calX)$.
\end{proof}
\begin{remark}\label{remark_univ}
For every $S$-scheme $T$, a morphism $T \rightarrow
\KgnXbe$ corresponds to a stable map $(C_T\xrightarrow{\pi_T} T, t_i,
f_T)$ over $T$, then, by descent theory, the identity of $\KgnXbe$
corresponds to a universal stable map $(\scrC \xrightarrow{\pi}
\KgnXbe,\sigma_i, \psi)$.
\end{remark}
\subsection{Evaluation maps}
Let $\overline{\mathcal{I}}_{\mu}(\calX)$ be stack of cyclotomic
gerbes of $\calX$ as described in \cite{AGV08} 3.3. Recall that
$\overline{\mathcal{I}}_{\mu}(\calX)$ is proper, since $\calX$ is
proper; moreover, if $\calX$ is smooth then
$\overline{\mathcal{I}}_{\mu}(\calX)$ is smooth (\cite{AGV08} 3.4).
\begin{remark}[\cite{AGV08} 3.5]\label{remark_inv}
There is an involution $\iota \colon
\overline{\mathcal{I}}_{\mu}(\calX) \rightarrow
\overline{\mathcal{I}}_{\mu}(\calX)$ defined over each
$\overline{\mathcal{I}}_{\mu_r}(\calX)$ as follows. Consider the
inversion automorphism $\tau \colon \mu_r \rightarrow \mu_r$ sending
$\xi$ to $\xi^{-1}$. For every object $(\mathcal{G}, \psi)$ of
$\overline{\mathcal{I}}_{\mu_r}(\calX)$, we can change the banding of
the gerbe $\mathcal{G}\rightarrow T$ through $\tau$ and get another
object $ ^\tau\mathcal{G}\rightarrow \calX$ of
$\overline{\mathcal{I}}_{\mu_r}(\calX)$.
\end{remark}
\begin{notat}\label{delta}
  We denote $\Delta \colon \inertia \rightarrow \inertia^2$ the
  morphism, which we will call \emph{diagonal}, induced by the
  identity and the involution $\iota$.
\end{notat}
\begin{definit}[\cite{AGV08} 4.4.1]
  The \emph{$i$-th evaluation map} $e_i\colon \KgnXbe \rightarrow
  \overline{\mathcal{I}}_{\mu}(\calX)$ is the morphism that associates
  to every twisted stable map $(\calC \rightarrow T,
  {\{\Sigma_i^\calC\}}_i, f \colon \calC \rightarrow \calX)$ the
  diagram
  \[
  \begin{tikzpicture}[xscale=1.5,yscale=-1.2]
    \node (A0_0) at (0, 0) {$\Sigma_i^\calC$};
    \node (A0_1) at (1, 0) {$\calX$};
    \node (A1_0) at (0, 1) {$T$};
    \path (A0_0) edge [->]node [auto] {$\scriptstyle{f}$} (A0_1);
    \path (A0_0) edge [->]node [auto] {$\scriptstyle{}$} (A1_0);
  \end{tikzpicture}
  \]
  The \emph{$i$-th twisted evaluation map} $\check{e}_i\colon
  \KgnXbe \rightarrow \overline{\mathcal{I}}_{\mu}(\calX)$ is the
  composition $\iota \circ e_i$, where $\iota$ is the involution
  described in Remark~\ref{remark_inv}.
\end{definit}
\begin{remark}
  Let us notice that the evaluation map $e_i$ is the
  composition \[\KgnXbe \xrightarrow{\Gamma_{e_i}} \KgnXbe
  \times_S\overline{\mathcal{I}}_{\mu}(\calX)\xrightarrow{\pi}
  \overline{\mathcal{I}}_{\mu}(\calX)\text{,}\] where $\Gamma_{e_i}$
  is the graph of $e_i$ and $\pi$ is the projection. By the following
  cartesian diagram
  \[
  \begin{tikzpicture}[xscale=4.4,yscale=-2.4]
    \node (A0_0) at (0, 0) {$\KgnXbe$};
    \node (A0_1) at (1, 0) {$\overline{\mathcal{I}}_{\mu}(\calX)$};
    \node (A1_0) at (0, 1) {$\KgnXbe \times_S \overline{\mathcal{I}}_{\mu}(\calX)$};
    \node (A1_1) at (1, 1) {$\overline{\mathcal{I}}_{\mu}(\calX) \times_S \overline{\mathcal{I}}_{\mu}(\calX)$};
    \path (A0_0) edge [->]node [auto] {$\scriptstyle{e_i}$} (A0_1);
    \path (A0_0) edge [->]node [left] {$\scriptstyle{\Gamma_{e_i}}$} (A1_0);
    \path (A0_1) edge [->]node [auto] {$\scriptstyle{\Delta}$} (A1_1);
    \path (A1_0) edge [->]node [auto] {$\scriptstyle{e_i \times \id}$} (A1_1);
  \end{tikzpicture}
  \]
  it follows that $\Gamma_{e_i}$ is a regular local immersion, hence,
  by \cite{kresch} 6.1, there exists a Gysin map
  $\Gamma_{e_i}^!$. Moreover $\underline{\Hom}_{\Mgntw}(\calC,\calX)$
  is flat over $\Mgntw$ therefore, since $\Mgntw$ is smooth over $S$
  and $\KgnXbe$ is an open substack of
  $\underline{\Hom}_{\Mgntw}(\calC,\calX)$, we get that $\pi$ is
  flat. Then we can define the pull-back $e_i^*=\Gamma_{e_i}^!\circ
  \pi^*$.
\end{remark}
\begin{notat}
  We write $e^*(\underline{\gamma})=\bigsqcap_{i=1}^ne_i^*(\gamma_i)$
  for every $\underline{\gamma}=\gamma_1\otimes\cdots \otimes
  \gamma_n$.
\end{notat}
\section{A virtual fundamental class}\label{section_virtualclass}
Let $D$ be a Dedekind domain and set $S = \spec D$. Let $\calX$ be a
smooth proper tame Deligne-Mumford stack of finite presentation over $S$,
admitting a projective coarse moduli scheme $X$. We want to define a
virtual fundamental class for $\KgnXbe$ relative to the forgetful
morphism \[\theta \colon \KgnXbe \rightarrow \MgnStw\text{,}\] following
the construction of \cite{BF} 7. For this, we need a
perfect relative obstruction theory for $\theta$.

\subsection{The stack of morphisms}\label{subsection_ot}
With notations as in Definition~\ref{def_morphisms}, notice that $\overline{\scrC}=\calC
\times_{\Mgntw}\underline{\Hom}_{\Mgntw}(\calC,\calX)$ is a universal
family for $\underline{\Hom}_{\Mgntw}(\calC,\calX)$ and we have the
following commutative diagram
  \[
  \begin{tikzpicture}
    \def\x{3.0}
    \def\y{-1.6}
    \node (A0_0) at (0*\x, 0*\y) {$\scrC$};
    \node (A0_1) at (1*\x, 0*\y) {$\overline{\scrC}$};
    \node (A0_2) at (2*\x, 0*\y) {$\calX$};
    \node (A1_0) at (0*\x, 1*\y) {$\KgnXbe$};
    \node (A1_1) at (1*\x, 1*\y) {$\underline{\Hom}_{\Mgntw}(\calC,\calX)$};
    \path (A0_0) edge [->] node [auto] {$\scriptstyle{}$} (A0_1);
    \path (A0_1) edge [->] node [below] {$\scriptstyle{\overline{\psi}}$} (A0_2);
    \path (A1_0) edge [->] node [auto] {$\scriptstyle{}$} (A1_1);
    \path (A0_0) edge [->, bend left=20] node [auto] {$\scriptstyle{\psi}$} (A0_2);
    \path (A0_0) edge [->] node [auto] {$\scriptstyle{\pi}$} (A1_0);
    \path (A0_1) edge [->] node [auto] {$\scriptstyle{\overline{\pi}}$} (A1_1);
  \end{tikzpicture}
  \]
\begin{lemma}\label{lemma_obstr_theory}
  We have $F^\bullet=
  R\overline{\pi}_*(\overline{\psi}^*\Omega_{\sfrac{\calX}{S}} \otimes
  \omega_{\overline{\pi}})[-1]\in D_\coh^{(-1,0)}\left(\underline{\Hom}_{\Mgntw}(\calC,\calX)\right)$ and
  $\sfrac{h^1}{h^0}(F^\bullet)$ is a vector bundle stack.
\end{lemma}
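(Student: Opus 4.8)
The plan is to construct $F^\bullet$ as the relative cotangent complex of $\overline{\psi}\colon\overline{\scrC}\to\calX$ pushed forward along $\overline{\pi}$, and then to identify the two cohomology sheaves of the resulting complex concretely. First I would recall that, since $\overline{\psi}$ is a representable morphism of Deligne–Mumford stacks which is smooth relative to the universal curve (or at least such that $\overline{\psi}^*\Omega_{\calX/S}$ makes sense as a locally free sheaf, using smoothness of $\calX/S$), the object $\overline{\psi}^*\Omega_{\calX/S}\otimes\omega_{\overline{\pi}}$ is a vector bundle on $\overline{\scrC}$. Because $\overline{\pi}$ is a proper, flat family of twisted nodal curves of relative dimension $1$, the higher direct image $R^i\overline{\pi}_*$ of any coherent sheaf vanishes for $i\geq 2$; hence $R\overline{\pi}_*\bigl(\overline{\psi}^*\Omega_{\calX/S}\otimes\omega_{\overline{\pi}}\bigr)$ has cohomology concentrated in degrees $0$ and $1$, and after the shift $[-1]$ the complex $F^\bullet$ lies in $D^{(-1,0)}_{\coh}$. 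This requires knowing that $R\overline{\pi}_*$ of a vector bundle on a twisted curve is perfect of amplitude $[0,1]$; this follows from cohomology and base change together with the fact that twisted curves are tame (so pushforward commutes with base change and the formation of $R\overline{\pi}_*$ is compatible with arbitrary base change), exactly as in the scheme case treated in \cite{BF} 7.

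Next I would address the vector bundle stack claim. By definition, $\sfrac{h^1}{h^0}(F^\bullet)$ is the Picard stack associated to the two-term complex $F^\bullet$ placed in degrees $[-1,0]$; the assertion that it is a \emph{vector bundle} stack amounts to saying that $F^\bullet$ is, locally on $\underline{\Hom}_{\Mgntw}(\calC,\calX)$, quasi-isomorphic to a two-term complex of vector bundles $[E^{-1}\to E^0]$. To produce such a local resolution I would use the standard device for pushforwards along curves: choose, locally on the base, a $\overline{\pi}$-relatively ample divisor $A$ on $\overline{\scrC}$ supported away from the nodes and markings, and write the vector bundle $\calG:=\overline{\psi}^*\Omega_{\calX/S}\otimes\omega_{\overline{\pi}}$ in a short exact sequence $0\to\calG\to\calG(mA)\to\calG(mA)|_{mA}\to 0$ with $m\gg 0$ chosen so that $R^1\overline{\pi}_*\calG(mA)=0$. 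Then $R\overline{\pi}_*\calG$ is represented by the two-term complex $[\overline{\pi}_*\calG(mA)\to\overline{\pi}_*\bigl(\calG(mA)|_{mA}\bigr)]$ of vector bundles, both terms being locally free by cohomology and base change (here again tameness of the twisted curve is what guarantees that restriction to $mA$ and pushforward behave well and that $\overline{\pi}_*$ of these sheaves is locally free of the expected rank). Shifting by $[-1]$ gives the required local presentation of $F^\bullet$ by vector bundles in degrees $[-1,0]$, so $\sfrac{h^1}{h^0}(F^\bullet)$ is a vector bundle stack.

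I expect the main obstacle to be verifying that all of this \emph{twisted-curve and mixed-characteristic} bookkeeping genuinely works: one must check that twisted curves admit $\overline{\pi}$-relatively ample divisors avoiding the stacky locus (so that the truncation-by-a-divisor trick applies), that tameness of $\calX$ — hence of the fibers of $\overline{\pi}$ — is exactly the hypothesis ensuring cohomology and base change for $R\overline{\pi}_*$ holds over a general Dedekind base (not just over a field of characteristic zero), and that $\omega_{\overline{\pi}}$ is the correct relative dualizing sheaf for the family of twisted nodal curves $\overline{\pi}$. Once these points are in place, the proof is a direct transcription of \cite{BF} 7 and \cite{behrend} to the tame Deligne–Mumford setting; I would cite \cite{AV02} for the structure of twisted curves and \cite{AGV08} for the tameness properties, and otherwise follow the classical argument verbatim.
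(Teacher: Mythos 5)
Your argument is correct and follows the same two-step strategy as the paper: first, amplitude $[0,1]$ of $R\overline{\pi}_*(\overline{\psi}^*\Omega_{\sfrac{\calX}{S}}\otimes\omega_{\overline{\pi}})$ from local freeness of the sheaf plus vanishing of $R^i\overline{\pi}_*$ for $i\geq 2$ by cohomology and base change; second, a two-term resolution by vector bundles to get the vector bundle stack. The only divergence is in the resolution device. The paper twists \emph{down}: it chooses a $\overline{\pi}$-ample line bundle $\scrL$, surjects $\mathscr{G}=\overline{\pi}^*\overline{\pi}_*(\mathscr{F}\otimes\scrL^n)\otimes\scrL^{-n}\twoheadrightarrow\mathscr{F}$ with kernel $\mathscr{K}$, arranges $R^0\overline{\pi}_*$ of both to vanish, and represents $F^\bullet$ globally by $[R^1\overline{\pi}_*\mathscr{K}\to R^1\overline{\pi}_*\mathscr{G}]$. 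You twist \emph{up} by a relative effective ample divisor $A$ to kill $R^1$ and represent the pushforward by $[\overline{\pi}_*\calG(mA)\to\overline{\pi}_*(\calG(mA)|_{mA})]$. Both are standard and both suffice (being a vector bundle stack is local on the base), but your route carries one extra verification that the paper's avoids: the existence, locally on $\underline{\Hom}_{\Mgntw}(\calC,\calX)$, of a relative effective Cartier divisor on the twisted curve that is $\overline{\pi}$-ample and avoids nodes, markings and the stacky locus — you flag this yourself, and it does hold (produce it on the coarse curve and pull back), whereas choosing a line bundle $\scrL$ requires no sections and works globally. Two small inaccuracies in your framing, neither fatal: $F^\bullet$ is not the pushforward of the relative cotangent complex of $\overline{\psi}$ but of the twist $\overline{\psi}^*\Omega_{\sfrac{\calX}{S}}\otimes\omega_{\overline{\pi}}$ (Serre-dual to $R\overline{\pi}_*\overline{\psi}^*T_{\sfrac{\calX}{S}}$, which is how it enters the obstruction theory later); and the reference \cite{behrend} does not exist in this paper's bibliography.
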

\begin{proof}
Since $\calX$ is smooth over $S$, the sheaf $\Omega_{\sfrac{\calX}{S}}$ is a
vector bundle over $\calX$. The dualizing sheaf $\omega_{\overline{\pi}}$
is a line bundle over $\overline{\scrC}$, because $\overline{\scrC}$ is
a family of curves with at most nodal singularities (which are
Gorenstein). Hence $\overline{\psi}^*\Omega_{\sfrac{\calX}{S}} \otimes
\omega_{\overline{\pi}}$ is a vector bundle on
$\overline{\scrC}$. Recall that the cohomology of the total
pushforward is the higher pushforward sheaf. Moreover,
$\overline{\pi}$ is a flat projective morphism of relative dimension
$1$, so the $i$-pushforward vanishes for $i>1$ by the cohomology and
base-change theorem (\cite{FGAexplained} Corollary~8.3.4), therefore
\[R\overline{\pi}_*(\overline{\psi}^*\Omega_{\sfrac{\calX}{S}} \otimes
\omega_{\overline{\pi}}) \in D_\coh^{(0,1)} \left(\underline{\Hom}_{\Mgntw}(\calC,\calX)\right)\text{.}\] Set
$\mathscr{F}=\overline{\psi}^*\Omega_{\sfrac{\calX}{S}} \otimes
\omega_{\overline{\pi}}$. Let $\scrL$ be a $\overline{\pi}$-ample line
bundle
 then, for $n$ big enough,
$\mathscr{F}\otimes \scrL^n$ is generated by global sections and
$R^0\overline{\pi}_*(\mathscr{F}\otimes \scrL^{-n})=0$. Thus we have a
surjection \[\mathscr{G}=\overline{\pi}^*\overline{\pi}_*(\mathscr{F}\otimes \scrL^n)\otimes
\scrL^{- n}\rightarrow \mathscr{F}\text{,}\] and we denote by
$\mathscr{K}$ the kernel. Notice that $\mathscr{K}$ is a vector bundle
because it is the kernel of a surjection of vector bundles. Hence we
get the following exact sequence \[0 \rightarrow
R^0\overline{\pi}_*\mathscr{K} \rightarrow
R^0\overline{\pi}_*\mathscr{G}\rightarrow
R^0\overline{\pi}_*\mathscr{F}\rightarrow
R^1\overline{\pi}_*\mathscr{K}\rightarrow
R^1\overline{\pi}_*\mathscr{G}\rightarrow
R^1\overline{\pi}_*\mathscr{F}\rightarrow 0\text{.}\] Since
$R^0\overline{\pi}_*(\mathscr{F}\otimes \scrL^{-n})=0$, we have that
$R^0\overline{\pi}_*\mathscr{G}=0$ and thus
$R^0\overline{\pi}_*\mathscr{K}=0$. As a consequence,
$R^1\overline{\pi}_*\mathscr{K}$ and $R^1\overline{\pi}_*\mathscr{G}$
are vector bundles and $F^\bullet$ is quasi-isomorphic to
$[R^1\overline{\pi}_*\mathscr{K}\rightarrow
R^1\overline{\pi}_*\mathscr{G}]$.
\end{proof}
\begin{parag}
We define a morphism $\barphi \colon F^\bullet \rightarrow \tau_{\geq
  -1} L_{\bartheta}^\bullet$ in $D_\coh^{(-1,0)}\left(\underline{\Hom}_{\Mgntw}(\calC,\calX)\right)$ as
follows. By adjuction, this is equivalent to define a
morphism \[(\overline{\psi}^*\Omega_{\sfrac{\calX}{S}} \otimes
\omega_{\overline{\pi}})[-1] \rightarrow
L\overline{\pi}^!(L_{\bartheta}^\bullet)\text{.}\] Recall that if
$\overline{\pi}$ is a flat proper Gorenstein morphism of relative
dimension $N$, then $L \overline{\pi}^!= \overline{\pi}^* \otimes
\omega_{\overline{\pi}}[-N]$. This applies in our case with $N=1$ and
we get $L \overline{\pi}^! = \overline{\pi}^* \otimes
\omega_{\overline{\pi}}[-1]$. Hence to give the morphism $\barphi$ is
equivalent to giving a morphism
$\overline{\psi}^*\Omega_{\sfrac{\calX}{S}} \rightarrow
\overline{\pi}^*L_{\bartheta}^\bullet$. Notice that
$\Omega_{\sfrac{\calX}{S}} = L_{\sfrac{\calX}{S}}^\bullet$, since $\calX$ is
smooth over $S$. We define the morphism
$\overline{\psi}^*L_{\sfrac{\calX}{S}}^\bullet \rightarrow
\overline{\pi}^*L_{\bartheta}^\bullet$ as the
composition \[\overline{\psi}^*L_{\sfrac{\calX}{S}}^\bullet \rightarrow
L_{\sfrac{\overline{\scrC}}{S}}^\bullet \rightarrow
L_{\sfrac{\overline{\scrC}}{\calC}}^\bullet\cong\overline{\pi}^*
L_{\bartheta}^\bullet\text{,}\] where $\calC$ is the universal curve of
$\MgnStw$, the isomorphism
$L_{\sfrac{\overline{\scrC}}{\calC}}^\bullet\cong\overline{\pi}^*
L_{\bartheta}^\bullet$ follows from the fact that
$\overline{\scrC}=\calC\times_{\Mgntw}\underline{\Hom}_{\Mgntw}(\calC,\calX)$ and the morphism $\calC
\rightarrow \MgnStw$ is flat (\cite{olsson} 8.1).
\end{parag}
\begin{prop}\label{prop_obstheory_F}
The pair $(F^\bullet, \barphi)$ defined above is a perfect relative
obstruction theory for $\bartheta$.
\end{prop}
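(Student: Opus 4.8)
The plan is to verify the two conditions that define a perfect relative obstruction theory in the sense of \cite{BF}: that $F^\bullet$ is a perfect complex of amplitude $[-1,0]$, which is precisely Lemma~\ref{lemma_obstr_theory}, and that $\barphi$ induces an isomorphism on $h^0$ and an epimorphism on $h^{-1}$. So the entire content is the statement about the cohomology sheaves $h^0$ and $h^{-1}$.

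Since $F^\bullet$, the cotangent complex $L_{\bartheta}^\bullet$, and the morphism $\barphi$ are compatible with smooth base change on $\MgnStw$ and with \'etale localization on $\underline{\Hom}_{\Mgntw}(\calC,\calX)$, and the conditions on $h^0$, $h^{-1}$ may be checked after such a base change, I would first pass to a smooth chart $U\rightarrow\MgnStw$ by a scheme and so reduce to the morphism $\underline{\Hom}_U(\calC_U,\calX)\rightarrow U$, with $\calC_U\rightarrow U$ a fixed flat proper family of twisted nodal curves. Then I would compute the cohomology sheaves on each side. On the source side, $h^{-1}(F^\bullet)=\overline{\pi}_*(\overline{\psi}^*\Omega_{\sfrac{\calX}{S}}\otimes\omega_{\overline{\pi}})$ and $h^0(F^\bullet)=R^1\overline{\pi}_*(\overline{\psi}^*\Omega_{\sfrac{\calX}{S}}\otimes\omega_{\overline{\pi}})$; by relative Serre duality for the flat projective Gorenstein morphism $\overline{\pi}$ of relative dimension one these are identified with the duals of $R^1\overline{\pi}_*\overline{\psi}^*T_{\sfrac{\calX}{S}}$ and of $\overline{\pi}_*\overline{\psi}^*T_{\sfrac{\calX}{S}}$. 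Here the point is that, $\calX$ being tame and $\overline{\psi}$ representable, the fibres of $\overline{\scrC}$ are tame twisted nodal curves, so the classical cohomology-and-base-change and relative-duality statements hold for $\overline{\pi}$ just as for families of ordinary nodal curves, with $R^i\overline{\pi}_*$ vanishing for $i\geq2$. On the target side, $h^0(L_{\bartheta}^\bullet)=\Omega_{\bartheta}$, and the deformation theory of representable morphisms of algebraic stacks (\cite{olsson}, \cite{LO}) identifies $h^0(L_{\bartheta}^\bullet)$ and $h^{-1}(L_{\bartheta}^\bullet)$ with $(\overline{\pi}_*\overline{\psi}^*T_{\sfrac{\calX}{S}})^\vee$ and $(R^1\overline{\pi}_*\overline{\psi}^*T_{\sfrac{\calX}{S}})^\vee$ respectively, using that $\calX$ is smooth, so that $\overline{\psi}^*L_{\sfrac{\calX}{S}}^\bullet=\overline{\psi}^*\Omega_{\sfrac{\calX}{S}}$ is a vector bundle, and that the fibres of $\overline{\pi}$ are curves, so that there are no higher obstructions.

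It then remains to verify that under these identifications $h^0(\barphi)$ and $h^{-1}(\barphi)$ are the canonical comparison maps, so that the former is an isomorphism and the latter an epimorphism; in fact one expects $\barphi$ to be a quasi-isomorphism, as $\calX$ is smooth, but only the weaker property is needed. This is dictated by the construction of $\barphi$: through the flat-base-change isomorphism $L_{\sfrac{\overline{\scrC}}{\calC}}^\bullet\cong\overline{\pi}^*L_{\bartheta}^\bullet$ of \cite{olsson} 8.1 and relative duality for $\overline{\pi}$, the morphism $\barphi$ is adjoint to the composite
\[
\overline{\psi}^*L_{\sfrac{\calX}{S}}^\bullet \longrightarrow L_{\sfrac{\overline{\scrC}}{S}}^\bullet \longrightarrow L_{\sfrac{\overline{\scrC}}{\calC}}^\bullet,
\]
and taking cohomology of this composite together with the transitivity triangles of $\overline{\scrC}\to\calX\to S$ and of $\overline{\scrC}\to\calC\to S$ recovers precisely the comparison maps described above. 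This is the computation carried out in \cite{BF} 7 for maps of a nodal curve into a smooth variety, and it uses nothing about the characteristic of $S$ nor about $\calX$ being a scheme.

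The hard part will be this last transplantation: checking that the deformation-theoretic verification of \cite{BF} survives the passage from smooth varieties in characteristic zero to smooth proper tame Deligne-Mumford stacks over a Dedekind domain. The crucial input is that tameness of $\calX$ (and hence of the twisted curves mapping representably to it) makes the relative-duality and cohomology-and-base-change facts for $\overline{\pi}$ hold unchanged, so no positive-characteristic pathology intervenes, and that, $\bartheta$ being of Deligne-Mumford type, no additional contribution to $h^{-1}(L_{\bartheta}^\bullet)$ arises; what is left is the bookkeeping of the cotangent-complex transitivity triangles in the Artin-stack setting, handled by \cite{olsson} and \cite{LO}.
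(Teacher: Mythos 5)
Your strategy is genuinely different from the paper's. You propose to compute the cohomology sheaves of both $F^\bullet$ and $\tau_{\geq -1}L_{\bartheta}^\bullet$ over all of $\underline{\Hom}_{\Mgntw}(\calC,\calX)$, identify them via relative duality and the deformation theory of Hom-stacks, and then check that $\barphi$ realizes the canonical comparison (so that it is in fact a quasi-isomorphism). The paper instead invokes the deformation-theoretic criterion of \cite{BF} 4.5: it fixes a geometric point $\barx$, uses Serre duality and cohomology-and-base-change only fiberwise to identify $h^i((L\barx^*F^\bullet)\spcheck)$ with $H^i(\calC_{\barx},\overline{\psi}_{\barx}^*T_{\sfrac{\calX}{S}})$, and then checks that for every small extension $A'\to A$ the existence and classification of extensions of $g\colon\spec A\to \underline{\Hom}_{\Mgntw}(\calC,\calX)$ over a given $h'\colon\spec A'\to\MgnStw$ is governed by the obstruction class in $H^1\otimes I$ and the torsor structure under $H^0\otimes I$. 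That criterion is exactly the device that converts a pointwise deformation-theory statement into the sheaf-level conditions ($h^0$ isomorphism, $h^{-1}$ epimorphism), so the paper never has to identify $h^0(L_{\bartheta}^\bullet)$ and $h^{-1}(L_{\bartheta}^\bullet)$ as sheaves, nor to match $\barphi$ against a "canonical" map.

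This is also where your plan has a real soft spot. First, deformation theory of representable morphisms gives you, at a geometric point, the tangent space $H^0(\calC_{\barx},\overline{\psi}_{\barx}^*T_{\sfrac{\calX}{S}})$ and an obstruction space $H^1$; it does not directly give the cohomology \emph{sheaves} of $L_{\bartheta}^\bullet$, and your formulas $h^0(L_{\bartheta}^\bullet)\cong(\overline{\pi}_*\overline{\psi}^*T_{\sfrac{\calX}{S}})^\vee$ and $h^{-1}(L_{\bartheta}^\bullet)\cong(R^1\overline{\pi}_*\overline{\psi}^*T_{\sfrac{\calX}{S}})^\vee$ are not correct as stated (naive duals of the individual $R^i\overline{\pi}_*$ are not the cohomology sheaves of the derived dual of $R\overline{\pi}_*\overline{\psi}^*T_{\sfrac{\calX}{S}}$; there are correction terms where these sheaves fail to be locally free). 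Asserting such an identification globally is essentially assuming a stronger form of the conclusion. Second, you explicitly defer the verification that $\barphi$, as constructed from the composite $\overline{\psi}^*L_{\sfrac{\calX}{S}}^\bullet\to L_{\sfrac{\overline{\scrC}}{S}}^\bullet\to L_{\sfrac{\overline{\scrC}}{\calC}}^\bullet\cong\overline{\pi}^*L_{\bartheta}^\bullet$ by duality, agrees with that identification --- and that is the entire content of the proposition. If you want to complete the argument along your lines, the efficient fix is the one the paper uses: replace the global identification by the pointwise criterion of \cite{BF} 4.5, for which the fiberwise Serre duality computation plus the standard description of deformations and obstructions of the representable map $\overline{\psi}_A\colon\calC_A\to\calX$ (with $\calX$ smooth, so no higher terms in $\overline{\psi}^*L_{\sfrac{\calX}{S}}^\bullet$) suffices. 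Your remarks about tameness ensuring that duality and base change behave as in characteristic zero are correct and are indeed the reason the argument survives over a Dedekind domain.
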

\begin{proof}
  Let $\spec \bark\xrightarrow{\barx} \underline{\Hom}_{\Mgntw}(\calC,\calX)$ be a geometric
  point. Then $\barx$ corresponds to a twisted pointed curve $\calC_{\barx}$
  over $\bark$ together with a representable morphism $\overline{\psi}_{\barx}\colon
  \calC_{\barx}\rightarrow \calX$, obtained by pulling back
  $(\overline{\scrC},\overline{\psi})$ along $\barx$.
  Using Serre duality and cohomology and base chage theorem
  (\cite{FGAexplained} Corollary~8.3.4), we have
  \[ H^i(\calC_{\barx}, \overline{\psi}_{\barx}^*T_{\sfrac{\calX}{S}})
  =H^{1-i}(\calC_{\barx}, \barx^* (\overline{\psi}^*\Omega_{\sfrac{\calX}{S}}
  \otimes
  \omega_{\overline{\pi}}))\spcheck=h^{i-1}((F^\bullet[-1])\spcheck)=
  h^{i}((L \barx^* F^\bullet)\spcheck)\text{.}\] Now, let $A'
  \rightarrow A=\sfrac{A'}{I}$ be a small extension in
  $(\sfrac{\Art}{\hat{\scrO}_{S,\overline{s}}})$ and consider a
  commutative diagram
  \[
  \begin{tikzpicture}[xscale=2.6,yscale=-1.6]
    \node (A0_0) at (0, 0) {$\spec A$};
    \node (A0_1) at (1, 0) {$\underline{\Hom}_{\Mgntw}(\calC,\calX)$};
    \node (A1_0) at (0, 1) {$\spec A'$};
    \node (A1_1) at (1, 1) {$\MgnStw$};
    \path (A0_0) edge [->]node [auto] {$\scriptstyle{g}$} (A0_1);
    \path (A1_0) edge [->]node [auto] {$\scriptstyle{h'}$} (A1_1);
    \path (A0_1) edge [->]node [auto] {$\scriptstyle{\overline{\theta}}$} (A1_1);
    \path (A0_0) edge [->]node [left] {$\scriptstyle{i}$} (A1_0);
  \end{tikzpicture}
  \]
  In particular $h'$ corresponds to a family of twisted curves
  $\calC_{A'}$ over $A'$, obtained by pulling back $\calC \rightarrow \MgnStw$
  along $h'$, and $g$ corresponds to a family of twisted curves
  $\calC_A$ over $A$ together with a representable morphism $\overline{\psi}_A\colon \calC_A
  \rightarrow \calX$, obtained by pulling back
  $(\overline{\scrC},\overline{\psi})$ along $g$. Thus $g$ extends to
  $g'\colon \spec A'\rightarrow \underline{\Hom}_{\Mgntw}(\calC,\calX)$ if and only if
  $\overline{\psi}_A$ extends to $\overline{\psi}_{A'}\colon
  \calC_{A'}\rightarrow \calX$ if and only if, by deformation theory,
  $h^1(\overline{\phi}\spcheck)(\ob_{\overline{\theta}}(g,h'))$ is
  zero in $H^1(\calC_{\barx},
  \overline{\psi}_{\barx}^*T_{\sfrac{\calX}{S}})\otimes I$. Moreover the
  extensions, if they exist, form a torsor under $H^0(\calC_{\barx},
  \overline{\psi}_{\barx}^*T_{\sfrac{\calX}{S}})\otimes I$. By
  \cite{BF} 4.5,
  $(F^\bullet,\overline{\phi})$ is a relative obstruction theory for
  $\overline{\theta}$ and, by Lemma~\ref{lemma_obstr_theory},
  $F^\bullet$ is perfect.
\end{proof}
\subsection{A perfect obstruction theory for $\KgnXbe$}\label{subsection_vc}
\begin{cor}
  Let $E^\bullet =R \pi_*(\psi^* \Omega_{\sfrac{\calX}{S}} \otimes
  \omega_\pi)[-1]$ and let $\phi \colon
  E^\bullet \rightarrow \tau_{\geq -1} L_\theta^\bullet$ be the
  morphism induced by $\barphi$. Then $(E^\bullet, \phi)$ is a perfect
  relative obstruction theory for $\theta$.
\end{cor}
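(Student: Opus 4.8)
The plan is to deduce the statement from Proposition~\ref{prop_obstheory_F} by base change along the open immersion $\KgnXbe \hookrightarrow \underline{\Hom}_{\Mgntw}(\calC,\calX)$. First I would observe that, with $\scrC = \calC \times_{\Mgntw}\KgnXbe$ and $\psi\colon \scrC \to \calX$ the restriction of $\overline{\psi}$, the square relating $(\scrC,\pi,\psi)$ to $(\overline{\scrC},\overline{\pi},\overline{\psi})$ is cartesian, since $\overline{\scrC}=\calC\times_{\Mgntw}\underline{\Hom}_{\Mgntw}(\calC,\calX)$ and $\KgnXbe$ is an open substack of $\underline{\Hom}_{\Mgntw}(\calC,\calX)$. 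Because $\overline{\pi}$ is flat, cohomology and base change gives that the formation of $F^\bullet=R\overline{\pi}_*(\overline{\psi}^*\Omega_{\sfrac{\calX}{S}}\otimes\omega_{\overline{\pi}})[-1]$ commutes with this restriction, so that $E^\bullet$ is canonically the pull-back of $F^\bullet$; in particular $E^\bullet \in D_\coh^{(-1,0)}(\KgnXbe)$ and $\sfrac{h^1}{h^0}(E^\bullet)$ is a vector bundle stack, i.e. $E^\bullet$ is perfect of perfect amplitude contained in $[-1,0]$. Here one uses that $\omega_{\overline{\pi}}$ pulls back to $\omega_\pi$ (dualizing sheaves are compatible with base change for flat Gorenstein morphisms) and that $\Omega_{\sfrac{\calX}{S}}$ is already a sheaf on $\calX$, so its pull-backs along $\psi$ and $\overline{\psi}$ agree automatically.

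Next I would treat the map $\phi$. Restriction along an open immersion is exact and commutes with truncation and with the cotangent complex: the natural map $L_\theta^\bullet \to L_{\bartheta}^\bullet|_{\KgnXbe}$ is an isomorphism because $\KgnXbe \hookrightarrow \underline{\Hom}_{\Mgntw}(\calC,\calX)$ is \'etale (indeed an open immersion), so $\tau_{\geq -1}L_\theta^\bullet$ is the restriction of $\tau_{\geq -1}L_{\bartheta}^\bullet$. Hence $\phi\colon E^\bullet \to \tau_{\geq -1}L_\theta^\bullet$ defined as the restriction of $\barphi$ makes sense, exactly as asserted in the statement.

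Finally I would verify the obstruction-theory axioms of \cite{BF} 4.5. The point is that all the conditions are \'etale-local on the target. For a geometric point $\barx\colon \spec\bark \to \KgnXbe$, composing with the open immersion gives a geometric point of $\underline{\Hom}_{\Mgntw}(\calC,\calX)$, and the identifications $H^i(\calC_{\barx},\psi_{\barx}^*T_{\sfrac{\calX}{S}}) = h^i((L\barx^*E^\bullet)\spcheck)$ together with the lifting-torsor description are literally those established in the proof of Proposition~\ref{prop_obstheory_F}, since the curve $\calC_{\barx}$, the map $\psi_{\barx}$, and the relevant small-extension diagrams all factor through the open substack. Thus $h^0(\phi\spcheck)$ is an isomorphism and $h^{-1}(\phi\spcheck)$ is surjective at every geometric point, which by \cite{BF} 4.5 shows $(E^\bullet,\phi)$ is a relative obstruction theory; perfectness was checked above. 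I do not anticipate a genuine obstacle here: the only thing to be careful about is the compatibility of dualizing sheaves and of the cotangent complex with the open restriction, but both are standard and already implicitly used in the preceding results.
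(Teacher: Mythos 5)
Your proposal is correct and takes essentially the same route as the paper: restrict $F^\bullet$, $\barphi$ and the cotangent complex along the open immersion $j\colon \KgnXbe\hookrightarrow\underline{\Hom}_{\Mgntw}(\calC,\calX)$, so that $Lj^*F^\bullet=E^\bullet$, $Lj^*L_{\bartheta}^\bullet=L_\theta^\bullet$, $\phi=j^*\barphi$, and the defining conditions of a perfect obstruction theory are preserved. The only (cosmetic) difference is that the paper concludes by exactness of $j^*$ on cohomology sheaves, i.e.\ $h^0(\barphi)$ an isomorphism and $h^{-1}(\barphi)$ surjective imply the same for $\phi$, rather than re-running the pointwise criterion of \cite{BF} 4.5 as you do.
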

\begin{proof}
  Since the natural inclusion $j \colon \KgnXbe \hookrightarrow
  \underline{\Hom}_{\Mgntw}(\calC,\calX)$ is an open immersion, it follows that $Lj^*
  L_{\overline{\theta}}^\bullet = L_\theta^\bullet$, $Lj^*F^\bullet =
  E^\bullet$, and $\phi = j^* \barphi$. Hence, by
  Lemma~\ref{lemma_obstr_theory}, we have $E^\bullet \in
  D_\coh^{(-1,0)}\left(\KgnXbe\right)$. By Proposition~\ref{prop_obstheory_F}, we
  know that $(F^\bullet, \barphi)$ is a perfect obstruction theory for
  $\overline{\theta}$, hence $h^0(\barphi)$ is an isomorphism and
  $h^{-1}(\barphi)$ is surjective. Since the pullback $j^*$ is an
  exact functor, we have that $h^0(\phi)$ is an isomorphism and
  $h^{-1}(\phi)$ is surjective, which implies the statement.
\end{proof}
\begin{definit}
  We define the \emph{virtual fundamental class} of $\KgnXbe$ to
  be \[{[\KgnXbe]}^\virt = {[\KgnXbe, E^\bullet]}^\virt \in
  A_*(\sfrac{\KgnXbe}{S})\text{.}\] 
\end{definit}
\begin{remark}
Consider the vector bundle stack $\mu \colon
\frakE_\theta=\sfrac{h^1}{h^0}(E^\bullet) \rightarrow
\KgnXbe$. Then, for a geometric point $\barx$ of a component $\mathcal{K}$ of $\KgnXbe$, by Riemann-Roch theorem (\cite{AGV08} 7.2.1),
\begin{align*}
  \rk \barx^*\frakE_\theta &= \dim h^{-1}(L\barx^*E^\bullet) - \dim
  h^{0}(L\barx^*E^\bullet)\\ &= \dim H^1(\scrC_{\barx},
  \psi_{\barx}^*T_{\sfrac{\calX}{S}}) - \dim H^0(\scrC_{\barx},
  \psi_{\barx}^*T_{\sfrac{\calX}{S}})\\ &=
  (g-1)\rk(\psi_{\barx}^*T_{\sfrac{\calX}{S}}) -
  c_1(\psi_{\barx}^*T_{\sfrac{\calX}{S}})\cdot[\scrC_{\barx}] +\sum_{i=1}^n \age(\Sigma_i)\\
  &= (g-1)\dim_S \calX -
  c_1(T_{\sfrac{\calX}{S}})\cdot {\psi_{\barx}}_*[\scrC_{\barx}]+\sum_{i=1}^n \age(\Sigma_i)\text{,}
\end{align*}
where
$\age(\Sigma_i)=\age(\psi_{\barx}^*T_{\sfrac{\calX}{S}}|_{\Sigma_i})$
denotes the age of a locally free sheaf as defined in \cite{AGV08} 7.1
(recall that the age is constant on connected components of
$\inertia$). Thus the dimension of ${[\mathcal{K}]}^\virt$ is \[\dim_S \MgnStw -\rk \barx^*\frakE_\theta =(\dim_S
\calX - 3)(1-g) + c_1(T_{\sfrac{\calX_\eta}{\eta}})\cdot
{\psi_{\barx}}_*[\scrC_{\barx}]-\sum_{i=1}^n
\age(\Sigma_i) + n\text{.}\]
\end{remark}
\subsection{Properties}
\begin{parag}[\cite{AGV08} 5.1]
Let $\mathfrak{D}^{\text{tw}}(g_1,A|g_2,B)$ be the category fibered in
groupoids over $(\sfrac{\Sch}{S})$ which parametrizes nodal twisted
curves with a distinguished node separating the curve in two
components, one of genus $g_1$ containing the markings in a subset
$A\subset \{1, \ldots, n\}$, the other of genus $g_2$ containing the
markings in the complementary set $B$.  The category
$\mathfrak{D}^{\text{tw}}(g_1,A|g_2,B)$ is a smooth algebraic stack,
locally of finite presentation over $S$. Let $g=g_1+g_2$, there is a
natural representable morphism \[\gl \colon
\mathfrak{D}^{\text{tw}}(g_1,A|g_2,B) \rightarrow \Mgntw\] induced by
gluing the two families of curves into a family of reducible curves
with a distinguished node.
\end{parag}
\begin{prop}\label{prop_gl1}
\begin{enumerate}
\item Consider the evaluation morphisms
  $\check{e}_{\check{\bullet}}\colon\!\mathcal{K}_{g_1,A\sqcup
    \check{\bullet}}(\calX, \beta_1\!) \rightarrow
  \overline{\mathcal{I}}_\mu(\calX)$ and $e_\bullet\colon
  \mathcal{K}_{g_2,B\sqcup \bullet}(\calX, \beta_2) \rightarrow
  \overline{\mathcal{I}}_\mu(\calX)$.  There exists a natural
  representable morphism \[\mathcal{K}_{g_1,A\sqcup
    \check{\bullet}}(\calX,
  \beta_1)\times_{\overline{\mathcal{I}}_\mu(\calX)}\mathcal{K}_{g_2,B\sqcup
    \bullet}(\calX, \beta_2) \rightarrow \mathcal{K}_{g_1+g_2,A\sqcup
    B}(\calX, \beta_1+\beta_2)\text{.}\]
\item Consider the evaluation morphisms
  $\check{e}_{\check{\bullet}}\times e_{\bullet}
  \colon\mathcal{K}_{g-1,A\sqcup \{\check{\bullet},\bullet\}}(\calX,
  \beta_\eta) \rightarrow\overline{\mathcal{I}}_\mu(\calX)^{2}$ and
  the diagonal $\Delta \colon
  \overline{\mathcal{I}}_\mu(\calX)\rightarrow
  \overline{\mathcal{I}}_\mu(\calX)^{2}$ (\ref{delta}).  There exists a natural
  representable morphism \[\mathcal{K}_{g-1,A\sqcup
    \{\check{\bullet},\bullet\}}(\calX,
  \beta_\eta)\times_{\overline{\mathcal{I}}_\mu(\calX)^{2}}\overline{\mathcal{I}}_\mu(\calX)
  \rightarrow \mathcal{K}_{g,A}(\calX, \beta_\eta)\text{.}\]
\item We have a cartesian diagram
  \[
  \begin{tikzpicture}[xscale=6.0,yscale=-2.4]
    \node (A0_0) at (0, 0) {$\displaystyle{\bigsqcup_{\beta_1+\beta_2=\beta_\eta}\mathcal{K}_{g_1,A\sqcup \check{\bullet}}(\calX, \beta_1)\times_{\overline{\mathcal{I}}_\mu(\calX)}\mathcal{K}_{g_2,B\sqcup \bullet}(\calX, \beta_2)}$};
    \node (A0_1) at (1, 0) {$\mathcal{K}_{g_1+g_2,A\sqcup B}(\calX, \beta_\eta)$};
    \node (A1_0) at (0, 1) {$\mathfrak{D}^{\text{tw}}(g_1,A|g_2,B)$};
    \node (A1_1) at (1, 1) {$\mathfrak{M}_{g_1+g_2,A\sqcup B}^{\text{tw}}$};
    \path (A0_0) edge [->]node [auto] {$\scriptstyle{}$} (A0_1);
    \path (A0_0) edge [->]node [auto] {$\scriptstyle{}$} (A1_0);
    \path (A0_1) edge [->]node [auto] {$\scriptstyle{}$} (A1_1);
    \path (A1_0) edge [->]node [auto] {$\scriptstyle{\gl}$} (A1_1);
  \end{tikzpicture}
  \]
\end{enumerate}
\end{prop}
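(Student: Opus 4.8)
The plan is to establish the three assertions of Proposition~\ref{prop_gl1} in order, since parts (1) and (2) are the ingredients needed to make sense of the cartesian diagram in part (3). For part (1), I would construct the gluing morphism on the level of the moduli functors: given a $T$-point of the fiber product, i.e.\ a pair of twisted stable maps $(\calC_1 \to T, \{\Sigma_i^{\calC_1}\}_{i\in A}, \Sigma_{\check{\bullet}}^{\calC_1}, f_1)$ and $(\calC_2 \to T, \{\Sigma_i^{\calC_2}\}_{i\in B}, \Sigma_{\bullet}^{\calC_2}, f_2)$ whose images under $\check e_{\check\bullet}$ and $e_\bullet$ agree as objects of $\inertia$, one glues $\calC_1$ and $\calC_2$ along the gerbes $\Sigma_{\check{\bullet}}^{\calC_1} \cong \Sigma_{\bullet}^{\calC_2}$ to form a twisted nodal curve $\calC$ of genus $g_1 + g_2$ with a distinguished node. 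The compatibility of bandings is exactly what makes the node a \emph{balanced} twisted node — here one uses that $\check e_{\check\bullet} = \iota \circ e_{\check\bullet}$, so the banding on one side is the inverse of the banding on the other, which is the balancing condition. The two maps $f_1, f_2$ then glue (using the chosen isomorphism over the node) to a representable morphism $f \colon \calC \to \calX$; representability is checked on geometric points and on stabilizers of the node. The class of the glued curve on the coarse space is $\beta_1 + \beta_2$ by additivity of pushforward of fundamental classes along the two components, and stability of $f$ follows from stability of $f_1$ and $f_2$ on each component together with the fact that the new special point (the node) only adds to the set of points that the automorphism group must fix. Functoriality in $T$ is immediate from the construction, so this defines the desired morphism of stacks; representability again follows by checking it is injective on automorphisms.

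For part (2), the argument is the same gluing construction carried out in the non-separating case: one starts with a single twisted stable map $(\calC' \to T, \{\Sigma_i^{\calC'}\}_{i\in A}, \Sigma_{\check\bullet}^{\calC'}, \Sigma_{\bullet}^{\calC'}, f')$ of genus $g-1$ together with the datum, provided by the fiber product over $\inertia^2$ along the diagonal $\Delta$, of an isomorphism in $\inertia$ between the two evaluations $\check e_{\check\bullet}$ and $e_\bullet$. One glues $\calC'$ to itself along $\Sigma_{\check\bullet}^{\calC'}$ and $\Sigma_{\bullet}^{\calC'}$ using this isomorphism, producing a twisted nodal curve $\calC$ of genus $g$ with a distinguished non-separating node; the balancing of the node is again guaranteed by the presence of the involution $\iota$ in the definition of $\Delta$ (Notation~\ref{delta}), and $f'$ descends to a representable $f \colon \calC \to \calX$ of class $\beta_\eta$. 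Stability is preserved for the same reason as above. This gives the morphism, and representability is checked on automorphisms.

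For part (3), I would verify the cartesian diagram by comparing $T$-points of the fiber product $\mathfrak{D}^{\text{tw}}(g_1,A|g_2,B) \times_{\Mgnktw[g_1+g_2,A\sqcup B]} \mathcal{K}_{g_1+g_2,A\sqcup B}(\calX, \beta_\eta)$ with $T$-points of the disjoint union $\bigsqcup_{\beta_1+\beta_2=\beta_\eta} \mathcal{K}_{g_1,A\sqcup\check\bullet}(\calX,\beta_1) \times_{\inertia} \mathcal{K}_{g_2,B\sqcup\bullet}(\calX,\beta_2)$. Unwinding the left side: such a $T$-point is a twisted stable map $(\calC \to T, \ldots, f)$ of genus $g_1+g_2$ and class $\beta_\eta$ whose underlying twisted curve is equipped with a node separating it into a genus-$g_1$ part carrying $A$ and a genus-$g_2$ part carrying $B$. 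Cutting $\calC$ at this node produces two twisted pointed curves $\calC_1, \calC_2$ with extra markings $\check\bullet, \bullet$ at the two branches; restricting $f$ gives representable maps $f_1 \colon \calC_1 \to \calX$, $f_2 \colon \calC_2 \to \calX$ of some classes $\beta_1, \beta_2$ with $\beta_1 + \beta_2 = \beta_\eta$ (using the specialization-compatible class condition and additivity), and the gluing data at the node furnishes canonically an isomorphism $\check e_{\check\bullet}(f_1) \cong e_\bullet(f_2)$ in $\inertia$ — this is where the twisted-evaluation/evaluation pairing and the balancing of the node match up, exactly matching the fiber product over $\inertia$. Stability of $f_i$ follows because $\calC$ is stable and each $\calC_i$ has the node as an additional special point. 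Conversely, the morphism of part (1) together with the obvious map to $\mathfrak{D}^{\text{tw}}(g_1,A|g_2,B)$ produces the inverse. One checks the two constructions are mutually inverse as morphisms of fibered categories (compatible with pullback in $T$), which gives the equivalence. The main obstacle is the careful bookkeeping at the twisted node: one must check that the balanced structure and the band of the node translate precisely into the fiber-product-over-$\inertia$ condition on the two evaluation maps (including the inversion built into $\iota$ and $\Delta$), and that the representability of $f$ is equivalent to representability of $f_1$ and $f_2$ together with compatibility over the node — this is the delicate point where the stack structure, rather than just the coarse spaces, must be tracked, and it is essentially the verification carried out in \cite{AGV08} 5.2 adapted to the mixed-characteristic, $\beta_\eta$-graded setting.
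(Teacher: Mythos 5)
Your proposal is correct and follows essentially the same route as the paper, which simply states that the result ``follows in the same way as in \cite{AGV08} 5.2''; you are writing out in detail the cut-and-glue argument on $T$-points (balanced nodes, the involution $\iota$ matching the bandings, representability and stability checks) that the paper delegates to that reference. The one genuinely new wrinkle in this setting --- that the classes $\beta_1,\beta_2$ of the two pieces live in $A_1(\sfrac{X_\eta}{\eta})$ and must be tracked through the specialization maps --- is the point you flag with ``the specialization-compatible class condition and additivity,'' which is exactly where the adaptation is needed.
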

\begin{proof}
Follows in the same way as in \cite{AGV08} 5.2.
\end{proof}
\begin{parag}
By \cite{AGV08} 6.2.4, the morphism $\gl$ is finite and unramified,
therefore, by \cite{kresch} 4.1, it induces a pull-back homomorphism
on Chow groups \[\gl^!\colon A_*(\KgnXbe)\rightarrow
\bigoplus_{\beta_1+\beta_2=\beta_\eta}A_*(\mathcal{K}_{g_1,A\sqcup
  \check{\bullet}}(\calX,
\beta_1)\times_{\overline{\mathcal{I}}_\mu(\calX)}\mathcal{K}_{g_2,B\sqcup
  \bullet}(\calX, \beta_2))\text{.}\]
\end{parag}
\begin{prop}\label{prop_gl}
Consider the diagonal $\Delta \colon
  \overline{\mathcal{I}}_\mu(\calX)\rightarrow
  \overline{\mathcal{I}}_\mu(\calX)^{2}$ (\ref{delta}). We have
\begin{enumerate}
\item $\gl^!{[\mathcal{K}_{g,A\sqcup B}(\calX,
\beta_\eta)]}^{\virt}\!=\!\sum_{\beta_1+\beta_2=\beta_\eta}\Delta^!({[\mathcal{K}_{g_1,A\sqcup
  \check{\bullet}}(\calX,
\beta_1)]}^\virt\times {[\mathcal{K}_{g_2,B\sqcup
  \bullet}(\calX, \beta_2)]}^\virt)$;
\item $\gl^!{[\mathcal{K}_{g,A}(\calX, \beta_\eta)]}^{\virt}=\Delta^!{[\mathcal{K}_{g-1,A\sqcup \{\check{\bullet},\bullet\}}(\calX, \beta_\eta)]}^\virt$.
\end{enumerate}
\end{prop}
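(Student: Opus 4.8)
The plan is to deduce both identities from the cartesian diagram of Proposition~\ref{prop_gl1}, exactly as in \cite{AGV08} 5; the only new ingredient is that Kresch's intersection theory and the Behrend--Fantechi construction are used in the relative form over the Dedekind base $S$ developed in section~\ref{section_virtualclass}. Both statements are proved in the same way, so I carry out (1) in detail and indicate the changes for (2) at the end. First I would pull the universal twisted stable map back along $\gl$: over $\coprod_{\beta_1+\beta_2=\beta_\eta}\mathcal{K}_{g_1,A\sqcup\check{\bullet}}(\calX,\beta_1)\times_{\inertia}\mathcal{K}_{g_2,B\sqcup\bullet}(\calX,\beta_2)$ the universal curve becomes the reducible curve obtained by gluing two universal curves $\scrC_1,\scrC_2$ along the cyclotomic gerbe $\mathcal{N}$ over the distinguished node. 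Since $E^\bullet=R\pi_*(\psi^*\Omega_{\sfrac{\calX}{S}}\otimes\omega_\pi)[-1]$ is built functorially from the universal curve, its pull-back along $\gl$ is the relative obstruction theory of this fibered product over $\mathfrak{D}^{\text{tw}}(g_1,A|g_2,B)$.

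The next step is to identify that restricted obstruction theory. Applying $R\pi_*\bigl((-)\otimes\psi^*T_{\sfrac{\calX}{S}}\bigr)$ to the normalization sequence
\[
0\to\scrO_{\scrC}\to\scrO_{\scrC_1}\oplus\scrO_{\scrC_2}\to\scrO_{\mathcal{N}}\to 0
\]
produces a distinguished triangle relating $R\pi_*\psi^*T_{\sfrac{\calX}{S}}$ on the fibered product, its analogue $(R\pi_{1*}\psi_1^*T_{\sfrac{\calX}{S}})\boxplus(R\pi_{2*}\psi_2^*T_{\sfrac{\calX}{S}})$ on the product $\mathcal{K}_{g_1,A\sqcup\check{\bullet}}(\calX,\beta_1)\times_S\mathcal{K}_{g_2,B\sqcup\bullet}(\calX,\beta_2)$, and the contribution of the node $\mathcal{N}$; by relative Serre duality these are the duals of $E^\bullet$ and of the obstruction theories $E_1^\bullet,E_2^\bullet$ of the two factors. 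Because both evaluation maps $\check{e}_{\check{\bullet}}$ and $e_\bullet$ factor through the rigidified cyclotomic inertia stack, the gerbe $\mathcal{N}$ carries the induced banding and its contribution is the pull-back of $T_{\inertia}$ along the evaluation at the node; since $\calX$ is smooth, $\inertia$ is smooth over $S$ (\cite{AGV08} 3.4), so $\Delta\colon\inertia\to\inertia^2$ is a regular immersion with normal bundle $T_{\inertia}$. Writing the fibered product as $\bigl(\mathcal{K}_{g_1,A\sqcup\check{\bullet}}(\calX,\beta_1)\times_S\mathcal{K}_{g_2,B\sqcup\bullet}(\calX,\beta_2)\bigr)\times_{\inertia^2}\inertia$ along $\Delta$, this triangle is exactly the one governing the refined Gysin map $\Delta^!$; hence the compatibility of the virtual class with cartesian squares carrying such obstruction data (\cite{BF} 7, in the relative form over $S$; \cite{kresch} 6.1 for the Gysin maps) yields
\[
\gl^![\mathcal{K}_{g,A\sqcup B}(\calX,\beta_\eta)]^{\virt}=\sum_{\beta_1+\beta_2=\beta_\eta}\Delta^!\bigl([\mathcal{K}_{g_1,A\sqcup\check{\bullet}}(\calX,\beta_1)]^{\virt}\times[\mathcal{K}_{g_2,B\sqcup\bullet}(\calX,\beta_2)]^{\virt}\bigr)\text{,}
\]
the exterior product of virtual classes being produced by $E_1^\bullet\boxplus E_2^\bullet$.

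Statement (2) is obtained in the same way, replacing $\mathfrak{D}^{\text{tw}}(g_1,A|g_2,B)$ by the stack of twisted curves carrying a distinguished non-separating node and using the cartesian diagram of Proposition~\ref{prop_gl1}(2): the normalization sequence now contributes a single factor, the pull-back of $T_{\inertia}$ over $\mathcal{K}_{g-1,A\sqcup\{\check{\bullet},\bullet\}}(\calX,\beta_\eta)$, which again matches the normal bundle of $\Delta\colon\inertia\to\inertia^2$, and one concludes as before. I expect the main obstacle to be the bookkeeping at the twisted node: one must verify that the cyclotomic gerbe structure, the two bandings coming from $e_\bullet$ and $\check{e}_{\check{\bullet}}$, and the age contribution combine so that the node term in the normalization triangle is precisely the pull-back of the tangent complex of $\inertia$, since this is what makes it coincide with the normal bundle of the diagonal and allows the excess-intersection formula to close the argument. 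Tameness is used here both to ensure smoothness of $\inertia$ over $S$ and to control the pushforward along the gerbe at the node.
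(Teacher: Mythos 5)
Your proposal is correct and follows essentially the same route as the paper: pull back along $\gl$ using the cartesian diagram of Proposition~\ref{prop_gl1} and the functoriality of the virtual class (\cite{BF} 7.2), then use the normalization sequence to produce the distinguished triangle $E_{1,2}^\bullet \to E_1^\bullet\oplus E_2^\bullet \to E_\Sigma^\bullet$ with $E_\Sigma^\bullet$ identified with the cotangent complex of $\Delta$ (as in \cite{AGV08} 3.6.1 and 5.3.1), and conclude by compatibility with the Gysin map of the regular embedding $\Delta$ (\cite{BF} 7.5). The only cosmetic difference is in part (2), where the paper avoids re-running the normalization argument and instead directly combines regularity of $\Delta$ with \cite{BF} 7.2.
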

\begin{proof}
  For the first part, by Proposition~\ref{prop_gl1} and
  \cite{BF} 7.2, \[\gl^!{[\mathcal{K}_{g,A\sqcup
      B}(\calX,
    \beta_\eta)]}^{\virt}=\sum_{\beta_1+\beta_2=\beta_\eta}{[\mathcal{K}_{g_1,A\sqcup
      \check{\bullet}}(\calX,
    \beta_1)\times_{\inertia}\mathcal{K}_{g_2,B\sqcup \bullet}(\calX,
    \beta_2)]}^\virt\text{.}\] Let us denote for simplicity
  $\mathcal{K}^{(1)}=\mathcal{K}_{g_1,A\sqcup \check{\bullet}}(\calX,
  \beta_1)$ and $\mathcal{K}^{(2)}=\mathcal{K}_{g_2,B\sqcup
    \bullet}(\calX, \beta_2)$. Let $E_j^\bullet$ be the perfect
  obstruction theory of $\mathcal{K}^{(j)}$ as constructed in
  section~\ref{subsection_vc}, then $E_1^\bullet\oplus E_2^\bullet$ is
  the perfect obstruction theory of
  $\mathcal{K}^{(1)}\times_k\mathcal{K}^{(2)}$. Let $E_{1,2}^\bullet$
  be the perfect obstruction theory of
  $\mathcal{K}^{(1)}\times_{\inertia}\mathcal{K}^{(2)}$. By
  considering the normalization sequence for a family of nodal curves
  with a distinguished node $\Sigma$ over
  $\mathcal{K}^{(1)}\times_{\inertia}\mathcal{K}^{(2)}$, we get the
  following distinguished triangle, as in \cite{AGV08}
  5.3.1, \[E_{1,2}^\bullet \rightarrow E_1^\bullet\oplus E_2^\bullet
  \rightarrow E_\Sigma^\bullet\text{,}\] where $E_{\Sigma}^\bullet$
  can be identified with the cotangent complex of the map $\Delta$ in
  the same way as in \cite{AGV08} 3.6.1. Then, by
  \cite{BF} 7.5, we
  get \[\Delta^!({[\mathcal{K}_{g_1,A\sqcup \check{\bullet}}(\calX,
    \beta_1)]}^\virt\times {[\mathcal{K}_{g_2,B\sqcup \bullet}(\calX,
    \beta_2)]}^\virt)\!=\!{[\mathcal{K}_{g_1,A\sqcup
      \check{\bullet}}(\calX,
    \beta_1)\!\times_{\inertia}\!\mathcal{K}_{g_2,B\sqcup \bullet}(\calX,
    \beta_2)]}^\virt\text{.}\] For the second part of the statement,
  we observe that, since $\Delta$ is a regular
  embedding, \[\Delta^!{[\mathcal{K}_{g-1,A\sqcup
      \{\check{\bullet},\bullet\}}(\calX,
    \beta_\eta)]}^\virt={[\mathcal{K}_{g-1,A\sqcup
      \{\check{\bullet},\bullet\}}(\calX,
    \beta_\eta)\times_{\inertia^2}\inertia]}^\virt\text{,}\] and, by
  \cite{BF} 7.2, the right-hand side is equal
  to $\gl^!{[\mathcal{K}_{g,A}(\calX, \beta_\eta)]}^{\virt}$.
\end{proof}
\section{Gromov-Witten classes and invariants}\label{section_GW}
\subsection{Gromov-Witten classes}
Let $D$ be a Dedekind domain, set $S=\spec D$ and denote by $\eta$ the
generic point of $S$. Let $\calX$ be a smooth proper tame
Deligne-Mumford stack of finite presentation over $S$, admitting a
projective coarse moduli scheme $X$. Set $X_\eta= X \times_S
\eta$. Fix $\beta_\eta\in A_1(\sfrac{X_\eta}{\eta})$ and $g, n \geq
0$ with $2g+n\geq 3$.
\begin{remark}
If $S= \spec k$ with $k$ an algebraically closed field and if $l$ is a
prime different from the characteristic of $k$, we can define the
$l$-adic \'etale cohomology as \[H^r(\inertia, \bbZ_l)= \varprojlim_m H_{\et}^r(\inertia,
\sfrac{\bbZ}{l^m \bbZ})\text{.}\] Moreover $H^r(\inertia, \bbQ_l)= H^r(\inertia,
\bbZ_l)\otimes_{\bbZ_l}\bbQ_l$ and we have the cycle map \[\cl \colon
{A^r(\sfrac{\inertia}{k})}_{\bbQ} \rightarrow H^{2r}(\inertia, \bbQ_l(r))\] as
described in \cite{milne} VI.9. We set $H^*(\inertia)= \sum_r H^r(\inertia,
\bbQ_l(\overline{r}))$, where $\overline{r}$ is the integral part of
$\sfrac{r}{2}$.
\end{remark}
\begin{definit}[Gromov-Witten classes]
  We define the linear operator \[\Ignbe \colon
  {A^*(\sfrac{\inertia}{S})}_{\bbQ}^{\otimes n} \rightarrow
  {A^*(\sfrac{\mgnS}{S})}_{\bbQ}\] such that, given $\underline{\gamma}\in
  {A^*(\sfrac{\inertia}{S})}_{\bbQ}^{\otimes n}$, \[\Ignbe(\gamma_1 \otimes \cdots
  \otimes \gamma_n) = q_*\left(e^*(\underline{\gamma}) \cap
  {[\mathcal{K}_{g,n}(\sfrac{\calX}{S},
    \beta_\eta)]}^\virt\right)\text{,}\] where $q\colon
\KgnXbe\rightarrow \mgnS$ forgets the map to $\calX$, passes to the
coarse curve and stabilizes. If moreover $S= \spec k$ with $k$
an algebraically closed field, we can define
\[\Ignbe \colon {H^*(\inertia)}^{\otimes n} \rightarrow H^*(\mgnS)\] as above, where, abusing the notation, we
write ${[\mathcal{K}_{g,n}(\sfrac{\calX}{S}, \beta_\eta)]}^\virt$
instead of the corresponding homology class $\cl
\left({[\mathcal{K}_{g,n}(\sfrac{\calX}{S},
    \beta_\eta)]}^\virt\right)$.
\end{definit}
\begin{definit}[Gromov-Witten invariants]
  We define \[\langle \Ignbe\rangle (\underline{\gamma})=
  \int_{\KgnXbe}\left(e^*(\underline{\gamma}) \cap
    {[\mathcal{K}_{g,n}(\sfrac{\calX}{S},
      \beta_\eta)]}^\virt\right)\text{,}\] for
  $\underline{\gamma}=\gamma_1 \otimes \cdots \otimes \gamma_n \in
  {A^*(\sfrac{\inertia}{S})}_{\bbQ}^{\otimes n}$.  If $S= \spec k$
  with $k$ an algebraically closed field then $\langle \Ignbe\rangle
  (\underline{\gamma})$ is defined for every $\underline{\gamma} \in
  {H^*(\inertia)}^{\otimes n}$.
\end{definit}
\begin{notat}
  When $S=\spec k$, we have $X_\eta=X$ and hence we will simply write
  $\beta$ instead of $\beta_\eta$.
\end{notat}
\begin{remark}
We have that
\begin{align*}
  \int_{\mgnS}\Ignbe (\underline{\gamma})&= \int_{\mgnS}q_*\left(e^*(\underline{\gamma}) \cap
    {[\mathcal{K}_{g,n}(\sfrac{\calX}{S}, \beta_\eta)]}^\virt\right)\\
  &= \int_{\mathcal{K}_{g,n}(\sfrac{\calX}{S},
    \beta_\eta)}\left(e^*(\underline{\gamma}) \cap
    {[\mathcal{K}_{g,n}(\sfrac{\calX}{S}, \beta_\eta)]}^\virt\right)\\
  &= \langle \Ignbe\rangle (\underline{\gamma}) \text{.}
\end{align*}
\end{remark}
\begin{definit}[\cite{AGV08} 6.1.1]
  We define a locally constant function $\rank\colon
  \overline{\mathcal{I}}_{\mu}(\calX) \rightarrow \bbZ$ by evaluating
  on geometric points, $\rank(\barx, \mathcal{G})=r$, where $\mathcal{G}$
  is a gerbe banded by $\mu_r$. We can view $\rank$ as an element of
  $A^0(\overline{\mathcal{I}}_{\mu}(\calX))$.
\end{definit}
\begin{parag}[Alternative definition]
Following the
  formalism of \cite{AGV08}, we could define $\Ignbe$ as a linear
  operator ${A^*(\sfrac{\inertia}{S})}_{\bbQ}^{\otimes n} \rightarrow
  {A^*(\sfrac{\inertia}{S})}_{\bbQ}$ such that \[\Ignbe(\gamma_1
  \otimes \cdots \otimes \gamma_n) = \rank \cdot \check{e}_{n+1
    *}\left(\left(\bigsqcap_{i=1}^ne_i^*(\gamma_i)\right) \cap
    {[\mathcal{K}_{g,n+1}(\sfrac{\calX}{S},
      \beta_\eta)]}^\virt\right)\text{.}\] With this definition,
\begin{align*}
  \int_{\inertia}&\frac{1}{\rank}\text{I}_{g,n-1,\beta_\eta}^\calX
  (\gamma_1\otimes \cdots \otimes \gamma_{n-1})\cap \iota^*(\gamma_n)\\
  &= \int_{\inertia}\check{e}_{n *}\left(\left(\bigsqcap_{i=1}^{n-1}e_i^*(\gamma_i)\right) \cap
    {[\mathcal{K}_{g,n}(\sfrac{\calX}{S},
      \beta_\eta)]}^\virt\right)\cap \iota^*(\gamma_n)\\
  &= \int_{\inertia}\check{e}_{n *}\left(\left(\bigsqcap_{i=1}^{n-1}e_i^*(\gamma_i)\right) \cap
    {[\mathcal{K}_{g,n}(\sfrac{\calX}{S},
      \beta_\eta)]}^\virt\cap \check{e}_{n+1}^*\iota^*(\gamma_n)\right)\\
  &= \int_{\mathcal{K}_{g,n}(\sfrac{\calX}{S},
        \beta_\eta)}\left(\left(\bigsqcap_{i=1}^{n-1}e_i^*(\gamma_i)\right) \cap
      {[\mathcal{K}_{g,n}(\sfrac{\calX}{S},
        \beta_\eta)]}^\virt\cap e_{n}^*(\gamma_n)\right)\\
  &= \langle \Ignbe\rangle (\underline{\gamma}) \text{.}
\end{align*}
\end{parag}
\begin{remark}
  Let $\calM$ be a proper Artin stack over a field $k$. Let $L$ be a
  finite algebraic extension of $k$, then
  $\calM_L=\calM\times_kL\xrightarrow{\rho_L} \calM$ is smooth and
  finite of degree $[L : k]$. By \cite{fulton} 1.7.4,
  ${\rho_L}_*\rho_L^*= [L:k]$, therefore $\rho_L^*$ gives an
  isomorphism ${A_*(\sfrac{\calM}{k})}_{\bbQ}\cong
  {A_*(\sfrac{\calM_L}{L})}_{\bbQ}$. Let $\bark$ be an algebraic
  closure of $k$ and set $\overline{\calM}=\calM\times_k\bark$, then
  $A_*(\sfrac{\overline{\calM}}{\bark})=
  \varinjlim_LA_*(\sfrac{\calM_L}{L})$, where the limit is over all
  finite algebraic extensions $L$ of $k$ such that $L \subset
  \bark$. There is an induced homomorphism $\rho \colon
  A_*(\sfrac{\calM}{k}) \rightarrow
  A_*(\sfrac{\overline{\calM}}{\bark})$ which gives an isomorphism
  ${A_*(\sfrac{\calM}{k})}_{\bbQ}\cong
  {A_*(\sfrac{\overline{\calM}}{\bark})}_{\bbQ}$; for all $\beta \in
  A_*(\sfrac{\calM}{k})$ we set $\overline{\beta}=\rho(\beta)$. The
  same holds for bivariant Chow groups ${A^*(\bullet)}_{\bbQ}$.
\end{remark}
\begin{prop}
  Let $\calX$ be a smooth proper tame Deligne-Mumford stack of finite
  presentation over a field $k$, admitting a projective coarse moduli
  scheme $X$, and set $\overline{\calX}=\calX\times_k\bark$. Then, for all
  $\underline{\gamma}\in {A^*(\sfrac{\inertia}{k})}_{\bbQ}^{\otimes n}$, \[\Ignb (\underline{\gamma})=\I_{g,n,\overline{\beta}}^{\overline{\calX}}(\underline{\gamma})\text{.}\]
\end{prop}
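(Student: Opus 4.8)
The plan is to show that every ingredient entering the definition of $\Ignb$ — the moduli stack $\KgnX$ of twisted stable maps, its perfect relative obstruction theory and virtual fundamental class, the evaluation maps $e_i$ together with their Gysin pullbacks $e_i^*$, and the forgetful pushforward $q_*$ — is compatible with base change along the flat morphism $\spec\bark\rightarrow\spec k$, and then to conclude via the Remark above, which identifies the $\bbQ$-coefficient (bivariant) Chow groups over $k$ with those over $\bark$. First I would record the base-change compatibilities at the level of stacks: coarse moduli spaces of tame stacks, the formation of the stack of twisted stable maps, of the rigidified cyclotomic inertia stack $\inertia$, and of the stack of twisted curves all commute with base change on the base scheme, and $\beta$ pulls back to $\overline{\beta}$ along $A_1(\sfrac{X}{k})\rightarrow A_1(\sfrac{\overline{X}}{\bark})$ (which is exactly the map $\rho$ of the Remark on $A_1$). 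Hence $\mathcal{K}_{g,n}(\overline{\calX},\overline{\beta})=\KgnX\times_k\bark$, $\overline{\mathcal{I}}_\mu(\overline{\calX})=\inertia\times_k\bark$, and the morphisms $e_i$, $\Delta$, $\Gamma_{e_i}$, $q$ for $\overline{\calX}$ are obtained from those for $\calX$ by base change.

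Next I would verify that the perfect obstruction theory base changes. Since $\calX$ is smooth over $k$, $\Omega_{\sfrac{\overline{\calX}}{\bark}}$ is the pullback of $\Omega_{\sfrac{\calX}{k}}$, the relative dualizing sheaf of the universal twisted curve is compatible with base change, and $R\pi_*$ commutes with flat base change (cohomology and base change, \cite{FGAexplained} Corollary~8.3.4); so $E^\bullet=R\pi_*(\psi^*\Omega_{\sfrac{\calX}{S}}\otimes\omega_\pi)[-1]$ for $\overline{\calX}$ is the pullback along $\rho_{\bark}$ of $E^\bullet$ for $\calX$. I would then invoke the fact that the intrinsic normal cone and the virtual-class construction of \cite{BF} 7 (as well as Kresch's Gysin maps and flat pullbacks, \cite{kresch}) commute with flat base change to get ${[\mathcal{K}_{g,n}(\overline{\calX},\overline{\beta})]}^\virt=\rho_{\bark}^*{[\KgnX]}^\virt$. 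Finally I would note that the Gysin morphism $\Gamma_{e_i}^!$ and the flat pullback $\pi^*$ commute with flat base change, hence so do $e_i^*$ and the product defining $e^*(\underline{\gamma})$; the cap product with the virtual class commutes with flat pullback; and flat base change commutes with the proper pushforward $q_*$. Assembling these, $\I_{g,n,\overline{\beta}}^{\overline{\calX}}(\overline{\underline{\gamma}})=\rho_{\bark}^*\,\Ignb(\underline{\gamma})=\overline{\Ignb(\underline{\gamma})}$, which is the asserted equality once one identifies ${A^*(\sfrac{\mgnk}{k})}_{\bbQ}$ with the corresponding group over $\bark$ via $\rho$.

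The argument is largely a matter of bookkeeping; the only point requiring some care is making the passage to $\bark$ precise. I would either first work over an arbitrary finite subextension $L\subset\bark$, where $\rho_L$ is finite flat and each compatibility above is literal, and then pass to the colimit $A_*(\sfrac{\overline{\calM}}{\bark})=\varinjlim_L A_*(\sfrac{\calM_L}{L})$ of the Remark, using that all the operations involved respect its transition maps; or observe directly that $\spec\bark\rightarrow\spec k$ is flat and that the resulting pullback $\rho_{\bark}^*$ coincides with the map $\rho$ of the Remark. The main (and rather mild) obstacle is organizational: tracking which Chow group each class lives in and checking that the flat-base-change axioms of \cite{kresch} and \cite{BF} apply in this stacky setting relative over $\spec k$ — none of which is difficult, but it is where all of the actual work sits.
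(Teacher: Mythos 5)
Your proposal is correct and follows essentially the same route as the paper: the paper's proof also base-changes the stack of twisted stable maps to a finite extension $L/k$, identifies the virtual classes via the functoriality of \cite{BF} 7.2 under the isomorphism of $\bbQ$-coefficient Chow groups, and then passes to the colimit over finite subextensions of $\bark$. Your write-up simply makes explicit the compatibilities (obstruction theory, evaluation maps, pushforward) that the paper leaves implicit.
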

\begin{proof}
  Let $L$ be a finite algebraic extension of $k$ and set
  $\calX_L=\calX\times_kL$. Let $\beta_L=\rho_L^*\beta$. Notice that
  $\mathcal{K}_{g,n}(\sfrac{\calX_L}{L}, \beta_L)\cong \KgnXbk
  \times_k L$ and thus, by
  \cite{BF} 7.2, \[{[\KgnXbk]}^{\virt}=
  {[\mathcal{K}_{g,n}(\sfrac{\calX_L}{L},\beta_L)]}^{\virt}\in
  {A_*(\sfrac{\KgnXbk}{k})}_{\bbQ}\cong
  {A_*(\sfrac{\mathcal{K}_{g,n}(\sfrac{\calX_L}{L},\beta_L)}{L})}_{\bbQ}\text{.}\]
  Then for all $\underline{\gamma}\in {A^*(\sfrac{\inertia}{k})}_{\bbQ}^{\otimes n}$, we
  have $\I_{g,n,\beta_L}^{\calX_L} (\underline{\gamma}) =
  \Ignb(\underline{\gamma})$ and therefore, passing to the limit, we get $\I_{g,n,\overline{\beta}}^{\overline{\calX}}
  (\underline{\gamma}) = \Ignb(\underline{\gamma})$.
\end{proof}
\subsection{Comparison of invariants in mixed characteristic}
Let $D$ be a Dedekind domain, set $B = \spec D$. We denote by
$\eta=\spec K$ the generic point of $B$ and let $b_0,b_1\in B$ be
closed points of $B$. Let $\pi \colon \calY \rightarrow B$ be a smooth
proper tame Deligne-Mumford stack of finite presentation over $B$,
admitting a projective coarse moduli scheme $Y$ and set $\calY_\eta=\calY
\times_B\eta$, $\calY_h=\calY\times_B b_h$ for $h=0,1$. By \cite{fulton} 20.3,
there are specialization morphisms $\sigma_h \colon
A_*(\sfrac{\calY_\eta}{\eta})\rightarrow A_*(\sfrac{\calY_h}{b_h})$ for
$h=0,1$. Let $b_h =\spec k_h$ and let $\bark_h$ be an algebraic
closure of $k_h$ for $h=0,1$. We set $\overline{b}_h=\spec
\bark_h$. Recall that the cospecialization map gives an isomorphism
$H^*(\overline{\mathcal{I}}_\mu(\overline{\calY}_0))\cong H^*(\overline{\mathcal{I}}_\mu(\overline{\calY}_1))$, where
$\overline{\calY}_h=\calY_h\times_{k_h}\bark_h$ for $h=0,1$ (\cite{milne}
VI.4.1).
\begin{theorem}\label{theorem_definv}
  Let $\beta \in A_1(\sfrac{Y_\eta}{\eta})$ and set
  $\beta_h=\sigma_h(\beta)$ for
  $h=0,1$. Then \[\text{I}_{g,n,\overline{\beta}_0}^{\overline{\calY}_0}(\underline{\gamma})
  =
  \text{I}_{g,n,\overline{\beta}_1}^{\overline{\calY}_1}(\underline{\gamma})\text{,}\]
  for every $\underline{\gamma}\in {H^*(\overline{\mathcal{I}}_\mu(\overline{\calY}_0))}^{\otimes
    n}\cong {H^*(\overline{\mathcal{I}}_\mu(\overline{\calY}_1))}^{\otimes n}$.
 \end{theorem}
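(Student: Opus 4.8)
The plan is to reduce the comparison of Gromov--Witten invariants at two closed points $b_0,b_1$ to a statement that the relevant cycle-theoretic data are compatible with the specialization map of \cite{fulton} 20.3. The key geometric input is that the moduli stack of twisted stable maps is proper and flat over the base in a way that is compatible with the construction of the virtual fundamental class, so that the virtual class of a fiber is the specialization of the virtual class of the generic fiber. Concretely, I would first recall that, since $\calY$ is smooth proper tame of finite presentation over $B$, the stack $\mathcal{K}_{g,n}(\sfrac{\calY}{B},\beta)$ is proper over $B$ (Theorem~\ref{theorem_kbe}) and carries a relative perfect obstruction theory $E^\bullet$ over $\MgnS^{\text{tw}}$ (built in Section~\ref{subsection_vc}), yielding a class ${[\mathcal{K}_{g,n}(\sfrac{\calY}{B},\beta)]}^\virt \in A_*(\sfrac{\mathcal{K}_{g,n}(\sfrac{\calY}{B},\beta)}{B})$. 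Restricting this obstruction theory to the fiber over a closed point $b_h$ recovers the obstruction theory of $\mathcal{K}_{g,n}(\sfrac{\calY_h}{b_h},\beta_h)$, because $\Omega_{\sfrac{\calY}{B}}$ restricts to $\Omega_{\sfrac{\calY_h}{b_h}}$ and pushforward along the universal curve commutes with base change along $b_h \hookrightarrow B$ by cohomology and base change, exactly as in the proof of Proposition~\ref{prop_obstheory_F}.

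Second, I would invoke the behaviour of virtual classes under base change by a regular embedding, i.e.\ \cite{BF} 7.2 together with Kresch's Gysin formalism (\cite{kresch} 6.1), in the localized setting $R=D_{\frakm_h}$, a discrete valuation ring. The point is that if $i_h \colon \mathcal{K}_{g,n}(\sfrac{\calY_h}{b_h},\beta_h) \hookrightarrow \mathcal{K}_{g,n}(\sfrac{\widetilde{\calY}_h}{R},\beta)$ denotes the inclusion of the closed fiber and $j_\eta$ the inclusion of the generic fiber, then the specialization homomorphism $\sigma_h$ on the moduli stack sends ${[\mathcal{K}_{g,n}(\sfrac{\calY_\eta}{\eta},\beta)]}^\virt$ to ${[\mathcal{K}_{g,n}(\sfrac{\calY_h}{b_h},\beta_h)]}^\virt$; this follows from $i_h^!$ applied to the relative virtual class, using that the virtual class of the total family pulls back correctly to each fiber (again \cite{BF} 7.2, now for the closed immersion of a Cartier divisor). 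In parallel, the evaluation maps $e_i \colon \mathcal{K}_{g,n}(\sfrac{\calY}{B},\beta) \to \overline{\mathcal{I}}_\mu(\calY)$ are compatible with base change to the fibers, and $\overline{\mathcal{I}}_\mu(\calY)$ is smooth proper over $B$, so one has a well-defined specialization on its Chow groups too; the pull-back $e_i^* = \Gamma_{e_i}^! \circ \pi^*$ commutes with $\sigma_h$ because both $\Gamma_{e_i}^!$ (a Gysin map for a regular immersion) and flat pull-back $\pi^*$ commute with specialization by \cite{fulton} 20.3.

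Third, I would assemble the formula. Writing $q_h$ for the stabilization morphism over $b_h$, one has $\text{I}_{g,n,\beta_h}^{\calY_h}(\underline{\gamma}) = q_{h*}\big(e^*(\underline{\gamma}) \cap {[\mathcal{K}_{g,n}(\sfrac{\calY_h}{b_h},\beta_h)]}^\virt\big)$, and since proper pushforward commutes with specialization (\cite{fulton} 20.3 again), and since the class $\overline{\beta}_h$ is the image of $\beta$ under the geometric specialization $\overline{\sigma_h}$ with $\overline{\sigma_h}(\overline{\beta})=\overline{\sigma_h(\beta)}$ as recorded in Section~\ref{section_stable}, we conclude that $\text{I}_{g,n,\overline{\beta}_0}^{\overline{\calY}_0}(\underline{\gamma})$ and $\text{I}_{g,n,\overline{\beta}_1}^{\overline{\calY}_1}(\underline{\gamma})$ are both equal to the specialization of the common generic-fiber class $\text{I}_{g,n,\overline{\beta}}^{\overline{\calY}_\eta}(\underline{\gamma})$ to $\overline{b}_h$; here one uses the cospecialization isomorphism $H^*(\overline{\mathcal{I}}_\mu(\overline{\calY}_0)) \cong H^*(\overline{\mathcal{I}}_\mu(\overline{\calY}_\eta)) \cong H^*(\overline{\mathcal{I}}_\mu(\overline{\calY}_1))$ (\cite{milne} VI.4.1) to make sense of comparing the outputs. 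Finally, because numerical equivalence is used on Chow groups and the $\ell$-adic cycle class map is compatible with specialization, the numerical invariants (and, via the cycle map, the cohomological invariants) agree.

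The main obstacle I expect is the second step: establishing cleanly that the construction of the virtual fundamental class commutes with passing to a fiber over a closed point of the Dedekind base, i.e.\ that $i_h^!{[\mathcal{K}_{g,n}(\sfrac{\widetilde{\calY}_h}{R},\beta)]}^\virt = {[\mathcal{K}_{g,n}(\sfrac{\calY_h}{b_h},\beta_h)]}^\virt$ in $A_*(\sfrac{\mathcal{K}_{g,n}(\sfrac{\calY_h}{b_h},\beta_h)}{b_h})$. This requires checking that the relative obstruction theory $E^\bullet$ over $\MgnS^{\text{tw}}$ restricts to the fiberwise obstruction theory and that $\MgnS^{\text{tw}}$ is smooth over $S$ so that the pure-dimension/flatness hypotheses of the $\cite{BF}$ functoriality (7.2) are met along the Cartier divisor $b_h \hookrightarrow S$; the smoothness of $\MgnS^{\text{tw}}$ over $S$ (recalled after Theorem~\ref{theorem_kbe}) and tameness of $\calY$ are exactly what make this go through, but verifying the compatibility of the intrinsic normal cones under specialization is the delicate point and is where I would spend the most care, likely by following \cite{BF} 7 line by line in the relative-to-$S$ setting as announced in Section~\ref{section_virtualclass}.
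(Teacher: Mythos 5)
Your proposal follows essentially the same route as the paper: both arguments specialize from the common generic fibre to each closed fibre via the specialization homomorphism of \cite{fulton} 20.3 (extended to stacks via \cite{kresch}), identify the virtual fundamental classes under specialization by the functoriality of \cite{BF} 7.2, and compare the resulting invariants through the cospecialization isomorphism of \cite{milne} VI.4.1. The only difference is cosmetic: the paper passes to the completion $\hat{R}_h$ of the localization and to geometric points before invoking the cohomological comparison, whereas you work with $D_{\frakm_h}$ directly, and the compatibility of the virtual class with specialization that you flag as delicate is handled in the paper exactly as you propose, by citing \cite{BF} 7.2.
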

\begin{proof}
  Let $R_h$ be the localization of $D$ at $b_h$ for $h=0,1$, then
  $R_h$ is a discrete valuation ring with generic point $\eta$ and
  closed point $b_h$. Let $\hat{R}_h$ be the completion of $R_h$, then
  $\hat{R}_h$ is a complete discrete valuation ring with closed point
  $b_h$ and generic point $\eta \times_{R_h}\hat{R}_h$. Moreover
  $R_0\otimes_DR_1=K$ and hence $\eta \times_{R_0}\hat{R}_0 = \eta
  \times_{R_1}\hat{R}_1$. We denote by $\hat{\eta}=\spec \hat{K}$ the
  generic point of $\hat{R}_h$. Set $\hat{\calY}_h=\calY \times_D\hat{R}_h$
  and $\hat{\calY}_\eta= \calY \times_D \hat{\eta}$.  Let $i_h \colon \calY_h
  \rightarrow \hat{\calY}_h$ and $j_h \colon \hat{\calY}_\eta \rightarrow
  \hat{\calY}_h$ be the natural inclusions. Let $\hat{\beta}\in
  A_1(\sfrac{\hat{Y}_\eta}{\hat{\eta}})$ be the pullback of
  $\beta$. We have the following cartesian diagram
  \[
  \begin{tikzpicture}[xscale=3.2,yscale=-1.8]
    \node (A0_0) at (0, 0) {$\mathcal{K}_{g,n}(\sfrac{\hat{\calY}_\eta}{\hat{\eta}}, \hat{\beta})$};
    \node (A0_1) at (1, 0) {$\mathcal{K}_{g,n}(\sfrac{\hat{\calY}_h}{\hat{R}_h}, \hat{\beta})$};
    \node (A0_2) at (2, 0) {$\mathcal{K}_{g,n}(\sfrac{\calY_h}{b_h}, \beta_h)$};
    \node (A1_0) at (0, 1) {$\mathfrak{M}_{\sfrac{g,n}{\hat{\eta}}}$};
    \node (A1_1) at (1, 1) {$\mathfrak{M}_{\sfrac{g,n}{\hat{R}_h}}$};
    \node (A1_2) at (2, 1) {$\mathfrak{M}_{\sfrac{g,n}{b_h}}$};
    \path (A0_1) edge [->]node [auto] {$\scriptstyle{}$} (A1_1);
    \path (A0_0) edge [->]node [auto] {$\scriptstyle{\hat{\j}}$} (A0_1);
    \path (A1_2) edge [->]node [above] {$\scriptstyle{\widetilde{\i}}$} (A1_1);
    \path (A1_0) edge [->]node [auto] {$\scriptstyle{\widetilde{\j}}$} (A1_1);
    \path (A0_2) edge [->]node [auto] {$\scriptstyle{}$} (A1_2);
    \path (A0_2) edge [->]node [above] {$\scriptstyle{\hat{\i}}$} (A0_1);
    \path (A0_0) edge [->]node [auto] {$\scriptstyle{}$} (A1_0);
  \end{tikzpicture}
  \]
  Let $\overline{K}$ be an algebraic closure of $\hat{K}$. We set
  $\overline{\beta}=\rho(\hat{\beta})\in
  A_1(\sfrac{\overline{Y}_\eta}{\overline{\eta}})$, where
  $\overline{\eta}=\spec \overline{K}$ and
  $\overline{\calY}_\eta=\calY\times_D\overline{\eta}$. By \cite{fulton}
  20.3.5 and \cite{kresch} 3.5.7, 5.3.1, there exists a specialization
  homomorphism \[\hat{\sigma}_h \colon
  A_*(\sfrac{\mathcal{K}_{g,n}(\sfrac{\overline{\calY}_\eta}{\overline{\eta}},
    \overline{\beta})}{\overline{\eta}})_{\bbQ}\rightarrow
  A_*(\sfrac{\mathcal{K}_{g,n}(\sfrac{\overline{\calY}_h}{\overline{b}_h},
    \overline{\beta}_h)}{\overline{b}_h})_{\bbQ}\text{,}\] and, by the
  functoriality of the virtual fundamental class
  (\cite{BF} 7.2),
  \[\hat{\sigma}_h({[\mathcal{K}_{g,n}(\sfrac{\overline{\calY}_\eta}{\overline{\eta}},
    \overline{\beta})]}^\virt)={[\mathcal{K}_{g,n}(\sfrac{\overline{\calY}_h}{\overline{b}_h},\overline{\beta}_h)]}^\virt\text{.}\]
  By \cite{milne} VI.4.1, there are isomorphisms
  $H^*(\overline{\mathcal{I}}_\mu(\overline{\calY}_\eta))\cong H^*(\overline{\mathcal{I}}_\mu(\overline{\calY}_h))$ for $h=0,1$,
  compatible with evaluation maps. It follows
  that, for $h=0,1$, 
  $\text{I}_{g,n,\overline{\beta}_h}^{\overline{\calY}_h}(\underline{\gamma})
  =
  \text{I}_{g,n,\overline{\beta}}^{\overline{\calY}_\eta}(\underline{\gamma})$ for
  $\underline{\gamma} \in {H^*(\overline{\mathcal{I}}_\mu(\overline{\calY}_h))}^{\otimes n}$.
\end{proof}
\begin{cor}
  Let $\calX$ be a smooth proper tame Deligne-Mumford stack of finite
  presentation over a field $k$, admitting a projective coarse moduli
  scheme $X$. Then the Gromov-Witten invariants $\langle\Ignb\rangle$
  are invariant under deformations of $\calX$.
\end{cor}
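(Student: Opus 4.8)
The plan is to deduce the statement from Theorem~\ref{theorem_definv} and from the comparison, established in Section~\ref{section_GW}, between the invariants of $\calX$ and those of its base change $\overline{\calX}$ to an algebraic closure. The substantive input — functoriality of the virtual fundamental class under specialization and compatibility of (co)specialization with the evaluation maps — is already packaged in Theorem~\ref{theorem_definv}; what remains is the routine reduction of an arbitrary deformation to a chain of families over Dedekind domains.

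First I would make the statement precise. By a \emph{deformation} of $\calX$ I mean a smooth proper tame Deligne-Mumford stack $\pi\colon\calY\to T$ of finite presentation over a connected Noetherian scheme $T$, admitting a projective coarse moduli scheme $Y\to T$, together with a point $t_0\in T$ and an isomorphism $\calY_{t_0}\cong\calX$. One must show that for every point $t_1\in T$ the Gromov-Witten invariants of $\overline{\calX}$ and of $\overline{\calY}_{t_1}$ coincide, once the state spaces $H^*(\overline{\mathcal{I}}_\mu(\overline{\calX}))$ and $H^*(\overline{\mathcal{I}}_\mu(\overline{\calY}_{t_1}))$ and the relevant curve classes have been identified by the cospecialization isomorphisms along a path in $T$ joining $t_0$ and $t_1$. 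By the proposition of Section~\ref{section_GW} stating that $\langle\Ignb\rangle=\langle\I_{g,n,\overline{\beta}}^{\overline{\calX}}\rangle$, it is enough to argue with the base changes to algebraic closures throughout.

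Next I would perform the reduction to a Dedekind base. Since $T$ is connected and (as we may assume) excellent of finite Krull dimension, $t_0$ and $t_1$ lie on a finite connected chain of integral one-dimensional closed subschemes of $T$; replacing each such curve by its normalization, which is a regular integral scheme of dimension one, and then by a suitable affine open containing the two relevant points, I obtain finitely many Dedekind domains $D_1,\dots,D_m$ together with morphisms $\spec D_i\to T$ whose closed and generic points trace out a path in $T$ from $t_0$ to $t_1$. Pulling $\calY$ back along $\spec D_i\to T$ yields again a smooth proper tame Deligne-Mumford stack of finite presentation over $\spec D_i$ admitting a projective coarse moduli scheme: properness, smoothness, finite presentation and tameness are stable under base change (for tameness one uses that the residue-field extensions occurring in the normalization and localization steps preserve the characteristic), and, since $\calY$ is tame, the formation of its coarse moduli space commutes with these base changes, so the coarse space of the pullback remains projective over $\spec D_i$. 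Applying Theorem~\ref{theorem_definv} to each $D_i$ — carrying along the curve class by the specialization homomorphism on the Chow groups of the coarse spaces, compatibly with the cospecialization of $l$-adic cohomology — then equates the Gromov-Witten invariants of the geometric fibres over the two endpoints of the $i$-th link, as linear functionals on the state spaces identified by the cospecialization isomorphisms of \cite{milne} VI.4.1, which are compatible with the evaluation maps. Composing these equalities along the chain gives $\langle\I_{g,n,\overline{\beta}_0}^{\overline{\calX}}\rangle=\langle\I_{g,n,\overline{\beta}_1}^{\overline{\calY}_{t_1}}\rangle$, which is the assertion.

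The main obstacle is not geometric but one of organisation: setting up the chain-of-curves reduction cleanly, verifying that the properties ``smooth, proper, tame, of finite presentation, with projective coarse moduli scheme'' survive the normalization and localization steps, and — most delicately — keeping track of the identifications of the state spaces $H^*(\overline{\mathcal{I}}_\mu(-))$ and of the curve classes through the successive links, so that the final comparison of invariants is stated over a single vector space. If one wishes this identification to be canonical one must in addition prove it independent of the chosen path; for the statement as phrased it suffices to exhibit one path, and hence one identification, under which the invariants agree.
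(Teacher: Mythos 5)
The paper offers no proof of this corollary at all, treating it as an immediate consequence of Theorem~\ref{theorem_definv}; your argument is exactly that deduction, merely fleshed out with the standard (and correctly handled) reduction of a general deformation over a connected base to a chain of families over Dedekind domains, together with the bookkeeping of curve classes and state-space identifications. Your proposal is correct and takes essentially the same approach as the paper intends.
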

\subsection{Axioms}
Let $\calX$ be a smooth proper tame Deligne-Mumford stack of finite
presentation over an algebraically closed field $k$, admitting a
projective coarse moduli scheme $X$.
\subsubsection{Effectivity}
Let ${A_1(\sfrac{X}{k})}_+$ be the set of $\beta \in
A_1(\sfrac{X}{k})$ such that $\beta \cdot c_1(\scrL)\geq
0$ for every ample line bundle $\scrL$. Then $\Ignb = 0$, for all
$\beta \notin {A_1(\sfrac{X}{k})}_+$.
\begin{proof}
  If $\KgnXbk \neq \emptyset$ then $\beta= f_*[C]$ for some
  stable map $(C, x_i, f)$, hence $\beta \in
  {A_1(\sfrac{X}{k})}_+$. It follows that $\KgnXbk
  = \emptyset$ for every
  $\beta \notin {A_1(\sfrac{X}{k})}_+$, and thus ${[\KgnXbk]}^\virt = 0$.
\end{proof}
\subsubsection{$S_n$-covariance}
For all $\gamma_j\in H^{m_j}(\inertia)$, we have \[\Ignb (\gamma_1 \otimes\!\cdots\!\otimes \gamma_i \otimes \gamma_{i+1}\otimes\!\cdots\!\otimes
\gamma_n)\!=\!{(-1)}^{m_im_{i+1}} \Ignb (\gamma_1 \otimes\!\cdots\!\otimes
\gamma_{i+1} \otimes \gamma_{i}\otimes\!\cdots\!\otimes
\gamma_n)\text{.}\]
\begin{proof}
The statement follows from the following (\cite{milne} VI.8) \[\gamma_1 \otimes
\cdots\otimes \gamma_i \otimes \gamma_{i+1}\otimes \cdots \otimes
\gamma_n\!=\!{(-1)}^{m_im_{i+1}} \gamma_1 \otimes \cdots\otimes
\gamma_{i+1} \otimes \gamma_{i}\otimes \cdots \otimes
\gamma_n\in H^*(\inertia^n)\text{.}\qedhere\]
\end{proof}
\subsubsection{Grading}
Let us set $H_{\st}^*(\calX)=H^*(\inertia)$. We consider
$H_\st^*(\calX)$ as a graded group with the following grading
$H_\st^m(\calX)=\bigoplus_{\Omega}H^{m-2\age(\Omega)}(\Omega)$, where
the sum is taken over all connected components $\Omega$ of $\inertia$.
We have \[\Ignb \colon \bigotimes_{i=1}^n{H_\st^{m_i}(\calX)}
\rightarrow H^{\sum m_i + 2((g-1)\dim_k \calX -
  c_1(T_{\sfrac{\calX}{k}})\cdot \beta)}(\mgnk)\text{.}\]
\begin{proof}
  Let $\barx=(\calC\rightarrow \calX,\Sigma_1,\ldots, \Sigma_n)$ be a
  geometric point of a component $\mathcal{K}$ of
  $\mathcal{K}_{g,n}(\calX, \beta)$ then, for $i=1, \ldots, n$, we
  have evaluation maps $e_i\colon \mathcal{K}\rightarrow \Omega_i$ for
  connected components $\Omega_j$ of $\inertia$. Since the age only
  depends on the connected component, we have
  $\age(\Sigma_i)=\age(\Omega_i)$. The virtual fundamental class
  ${[\mathcal{K}]}^\virt$ is a cycle class of dimension \[(\dim_S
  \calX - 3)(1-g) + c_1(T_{\sfrac{\calX}{k}})\cdot \beta-\sum_{i=1}^n
  \age(\Sigma_i) + n\text{.}\] Notice that $\gamma_i \in
  H_{\st}^{m_i}(\calX)=H^{m_i-2\age(\Omega_i)}(\Omega_i)$. It follows
  that $\text{I}_{g,n,\beta,\Omega_{n+1}}^\calX(\underline{\gamma})$ has degree
\begin{multline*}
2(3g\!-\!3\!+\!n)\!-\!2\left(\!(\dim_k \calX \!-\! 3)(1\!-\!g)\!+\!n \!+\!
  c_1(T_{\sfrac{\calX}{k}})\!\cdot\! \beta\!-\!\sum_{i=1}^n \age(\Sigma_i)\!\right)\!+\\+\!\sum_{i=1}^n (m_i\!-\!2\age(\Omega_i))
=\sum_{i=1}^n m_i +2((g-1)\dim_k \calX -
  c_1(T_{\sfrac{\calX}{k}})\cdot \beta)\text{.}\qedhere
\end{multline*}
\end{proof}
\subsubsection{Fundamental class} Let $\phi_n
\colon \overline{\mathcal{M}}_{\sfrac{g,n+1}{k}} \rightarrow \mgnk$ be
the natural functor that forgets the last marked point and
stabilizes. We have
\begin{align*}
\text{I}_{g, n+1, \beta}^\calX(\bullet \otimes \id) &= \phi_n^*\Ignb(\bullet)\text{,}\\
\text{I}_{0, 3, \beta}^\calX(\gamma_1 \otimes \gamma_2 \otimes \id) &=
\begin{cases}
\int_{\inertia}\frac{1}{\rank}\gamma_1 \cup \iota^*\gamma_2& \text{if }\beta=0\\
0 & \text{otherwise.}
\end{cases}
\end{align*}
\begin{proof}
Let us form the cartesian diagram
  \[
  \begin{tikzpicture}[xscale=2.2,yscale=-1.6]
    \node (A0_0) at (0, 0) {$\calM$};
    \node (A0_1) at (1, 0) {$\calN$};
    \node (A0_2) at (2, 0) {$\KgnXbk$};
    \node (A1_0) at (0, 1) {$\mathfrak{M}_{\sfrac{g, n+1}{k}}^{\text{tw}}$};
    \node (A1_1) at (1, 1) {$\mathfrak{N}$};
    \node (A1_2) at (2, 1) {$\Mgnktw$};
    \node (A2_1) at (1, 2) {$\overline{\calM}_{\sfrac{g, n+1}{k}}$};
    \node (A2_2) at (2, 2) {$\mgnk$};
    \path (A0_1) edge [->]node [auto] {$\scriptstyle{\widetilde{\theta}}$} (A1_1);
    \path (A0_0) edge [->]node [auto] {$\scriptstyle{j}$} (A0_1);
    \path (A2_1) edge [->]node [auto] {$\scriptstyle{\phi_n}$} (A2_2);
    \path (A1_0) edge [->]node [auto] {$\scriptstyle{}$} (A1_1);
    \path (A1_1) edge [->]node [auto] {$\scriptstyle{}$} (A1_2);
    \path (A0_2) edge [->]node [auto] {$\scriptstyle{\theta_n}$} (A1_2);
    \path (A1_1) edge [->]node [auto] {$\scriptstyle{}$} (A2_1);
    \path (A0_0) edge [->]node [auto] {$\scriptstyle{\hat{\theta}}$} (A1_0);
    \path (A0_1) edge [->]node [auto] {$\scriptstyle{\widetilde{\phi}}$} (A0_2);
    \path (A1_2) edge [->]node [auto] {$\scriptstyle{}$} (A2_2);
  \end{tikzpicture}
  \]
  and notice that $\calM$ is the algebraic stack of twisted stable maps of
  genus $g$ and class $\beta$ with $n+1$ gerbes which
  remain stable if we forget the last gerbe. In particular
  there is a regular embedding $i \colon \calM \rightarrow
  \mathcal{K}_{g, n+1}(\sfrac{\calX}{k}, \beta)$ which commute
  with $\theta_{n+1}$ and $\hat{\theta}$.  If we define a virtual
  fundamental class ${[\mathcal{M}]}^\virt$ relative to $\hat{\theta}$
  as described in section~\ref{subsection_vc} then
  \[i^!{[\mathcal{K}_{g,n+1}(\sfrac{\calX}{k}, \beta)]}^\virt =
  {[\mathcal{M}]}^\virt\text{.}\] If we define a virtual fundamental
  class ${[\mathcal{N}]}^\virt$ relative to $\widetilde{\theta}$ then,
  by
  \cite{BF} 7.2, \[j^*\widetilde{\phi}^*{[\KgnXbk]}^\virt
  = j^*{[\mathcal{N}]}^\virt= {[\calM]}^{\virt}\text{.}\] Let
  $\widetilde{q}\colon \mathcal{N}\rightarrow
  \overline{\mathcal{M}}_{\sfrac{g,n+1}{k}}$ and let $\pi \colon
  \inertia^{n+1} \rightarrow \inertia^{n}$ be the projection on the
  first $n$ components. Moreover we denote
  $\widetilde{e}=e_{(n)}\circ \widetilde{\phi}$,
  $\hat{e}=e_{(n+1)}\circ i$ and observe that $q_{n+1}\circ i=\widetilde{q}\circ j$. We have that
\begin{align*}
\text{I}_{g, n+1, \beta}^X (\underline{\gamma} \otimes \id) &= q_{n+1 *}\left(e_{(n+1)}^*(\underline{\gamma}\otimes \id)\cap {[\mathcal{K}_{g,n+1}(\sfrac{\calX}{k}, \beta)]}^\virt \right)\\
&= q_{n+1 *}i_*\left(\hat{e}^*(\underline{\gamma}\otimes \id)\cap i^!{[\mathcal{K}_{g,n+1}(\sfrac{\calX}{k}, \beta)]}^\virt \right)\\
&= \widetilde{q}_*j_*\left(\hat{e}^*(\underline{\gamma}\otimes \id)\cap j^*\widetilde{\phi}^*{[\KgnXbk]}^\virt \right)\\
&= \widetilde{q}_*\left(j_*\hat{e}^*(\underline{\gamma}\otimes \id)\cap \widetilde{\phi}^*{[\KgnXbk]}^\virt \right)\\
&= \widetilde{q}_*\widetilde{\phi}^*\left(e_{(n)}^*(\underline{\gamma})\cap {[\KgnXbk]}^\virt \right)\\
&= \phi_n^*{q}_{n *}\left(e_{(n)}^*(\underline{\gamma})\cap {[\KgnXbk]}^\virt \right)\\
&= \phi_n^* \Ignb(\underline{\gamma})\text{.}
\end{align*}
The remaining part of the proof follows from the same arguments of \cite{AGV08} 8.2.1.
\end{proof}
\subsubsection{Divisor}
We have, for all $\gamma
\in {H^2(X)}$, \[\phi_{*}\text{I}_{g, n+1,
  \beta}^\calX(\bullet \otimes \gamma)=(\beta \cdot
\gamma)\Ignb(\bullet)\text{.}\]
\begin{proof}
Consider the functor \[\overline{\phi} \colon\mathcal{K}_{g,n+1}(\sfrac{X}{k},
\beta)\rightarrow \KgnXbk
\] which forgets the last gerbe and stabilizes, and let
\[\widetilde{\phi}=\phi\times e_{n+1}\colon \mathcal{K}_{g,n+1}(\sfrac{X}{k},
\beta)\rightarrow \KgnXbk\times_k\inertia\].  By the K\"{u}nneth
formula (\cite{milne} VI.8), we can write \[\widetilde{\phi}_*
{[\mathcal{K}_{g,n+1}(\sfrac{X}{k}, \beta)]}^\virt=
{[\KgnXbk]}^\virt\otimes\beta' + \alpha\text{,}\] where $\beta' \in
{H^*(\inertia)}$ and $\alpha\in
H^{m}(\KgnXbk)\otimes H^l(\inertia)$,
with $m$ less than the degree of
${[\KgnXbk]}^\virt$. The class
$\beta'$ can be calculated by restricting to what happens over a
generic point of $\KgnXbk$. Representing such a point by $\xi =(\calC, \Sigma_1
,\ldots,\Sigma_n,f)$, we have the cartesian diagram
  \[
  \begin{tikzpicture}[xscale=3.8,yscale=-2.0]
    \node (A0_0) at (0, 0) {$\calC$};
    \node (A0_1) at (1, 0) {$\xi \times_k \inertia$};
    \node (A0_2) at (2, 0) {$\xi$};
    \node (A1_0) at (0, 1) {$\mathcal{K}_{g,n+1}(\sfrac{X}{k}, \beta)$};
    \node (A1_1) at (1, 1) {$\mathcal{K}_{g,n}(\sfrac{X}{k}, \beta) \times_k \inertia$};
    \node (A1_2) at (2, 1) {$\mathcal{K}_{g,n}(\sfrac{X}{k}, \beta)$};
    \path (A0_0) edge [->]node [auto] {$\scriptstyle{f}$} (A0_1);
    \path (A0_1) edge [->]node [auto] {$\scriptstyle{}$} (A1_1);
    \path (A1_0) edge [->]node [auto] {$\scriptstyle{\widetilde{\phi}}$} (A1_1);
    \path (A0_2) edge [->]node [auto] {$\scriptstyle{i}$} (A1_2);
    \path (A1_1) edge [->]node [auto] {$\scriptstyle{\pi}$} (A1_2);
    \path (A0_0) edge [->]node [auto] {$\scriptstyle{}$} (A1_0);
    \path (A0_1) edge [->]node [auto] {$\scriptstyle{}$} (A0_2);
  \end{tikzpicture}
  \]
where, for $\xi$ generic, the map $i$ is a regular embedding,
hence 
\begin{equation*}
i^!\widetilde{\phi}_{*} {[\mathcal{K}_{g,n+1}(\sfrac{X}{k},
    \beta)]}^\virt=f_*i^!{[\mathcal{K}_{g,n+1}(\sfrac{X}{k},
    \beta)]}^\virt = f_*[\calC]= \beta\text{,}
\end{equation*}
on the other
hand
\begin{equation*}
i^! \widetilde{\phi}_{*} {[\mathcal{K}_{g,n+1}(\sfrac{X}{k},
    \beta)]}^\virt=i^!\left({[\mathcal{K}_{g,n}(\sfrac{X}{k},
    \beta)]}^\virt \otimes \beta' +
\alpha\right)
=\beta'\text{.}
\end{equation*}
 It follows that $\beta'= \beta$. Then
\begin{align*}
  \phi_{*}\text{I}_{g, n+1, \beta}^\calX(\underline{\gamma} \otimes
  \gamma) &= \phi_{*}q_{n+1
    *}\left(e_{(n+1)}^*(\underline{\gamma}\otimes \gamma) \cap
    {[\mathcal{K}_{g,n+1}(\sfrac{\calX}{k}, \beta)]}^\virt\right)\\
  &= q_{n
    *}\pi_*\widetilde{\phi}_*\left(\widetilde{\phi}^*{(e_{(n)}\times
      \id)}^*(\underline{\gamma}\otimes \gamma) \cap
    {[\mathcal{K}_{g,n+1}(\sfrac{\calX}{k}, \beta)]}^\virt\right)\\
  &= q_{n *}\pi_*\left({(e_{(n)}\times
      \id)}^*(\underline{\gamma}\otimes \gamma) \cap
    \widetilde{\phi}_*{[\mathcal{K}_{g,n+1}(\sfrac{\calX}{k}, \beta)]}^\virt\right)\\
  &= q_{n *}\pi_*\left({(e_{(n)}\times
      \id)}^*(\underline{\gamma}\otimes \gamma) \cap
    \left({[\mathcal{K}_{g,n}(\sfrac{\calX}{k}, \beta)]}^\virt\times \beta +\alpha\right)\right)\\
  &= q_{n *}\left(e_{(n)}^*(\underline{\gamma}) \cap
    {[\mathcal{K}_{g,n}(\sfrac{\calX}{k}, \beta)]}^\virt\right)(\beta\cdot\gamma)\\
  &=(\beta \cdot \gamma)\Ignb(\underline{\gamma}) \text{.}\qedhere
\end{align*}
\end{proof}
\subsubsection{Splitting}
Let $g_1,g_2,n_1,n_2\geq 0$ be integers with $2g_j + n_j +1 \geq 3$,
and set $g=g_1+g_2$, $n=n_1 + n_2$. Let \[\gl \colon
\overline{\mathcal{M}}_{\sfrac{g_1, n_1 + 1}{k}} \times_k
\overline{\mathcal{M}}_{\sfrac{g_2, n_2 + 1}{k}} \rightarrow
\mgnk\text{,}\] be the natural functor that identifies the last marked
points. Let $\underline{\gamma} = \gamma_1 \otimes \cdots \otimes
\gamma_n$, then \[\gl^!\Ignb (\underline{\gamma}) = \sum_{\beta_1+ \beta_2=\beta} \text{I}_{g_1, n_1 +1, \beta_1}^\calX\otimes\text{I}_{g_2,
  n_2 +1, \beta_2}^\calX (\underline{\gamma} \otimes
[\Delta])\text{,}\] where
$\Delta$ is the diagonal in $\inertia^2$ (\ref{delta}).
\begin{proof}
  Let us notice that ${A_1(\sfrac{X}{k})}_+$ is a commutative
  semigroup then, by effectivity, the sum is finite. Denote for
  simplicity $\mathcal{K}^{(\beta_j)}=\mathcal{K}_{g_j,n_j+2}(\sfrac{\calX}{k},
  \beta_j)$ for $j=1,2$. Let us consider the following commutative diagram
  \[
  \begin{tikzpicture}[xscale=3.8,yscale=-2.0]
    \node (A0_0) at (0, 0) {$\mathcal{K}^{(\beta_1)}\times_k \mathcal{K}^{(\beta_2)}$};
    \node (A0_1) at (1, 0) {$\mathcal{K}^{(\beta_1)}\times_{\inertia} \mathcal{K}^{(\beta_2)}$};
    \node (A0_2) at (2, 0) {$\KgnXbk$};
    \node (A1_1) at (1, 1) {$\overline{\mathcal{M}}_{\sfrac{g_1, n_1 + 1}{k}} \times_k
\overline{\mathcal{M}}_{\sfrac{g_2, n_2 + 1}{k}} $};
    \node (A1_2) at (2, 1) {$\mgnk$};
    \path (A0_1) edge [->]node [auto] {$\scriptstyle{\widetilde{q}}$} (A1_1);
    \path (A1_1) edge [->]node [auto] {$\scriptstyle{\gl}$} (A1_2);
    \path (A0_2) edge [->]node [auto] {$\scriptstyle{q}$} (A1_2);
    \path (A0_0) edge [->]node [below] {$\scriptstyle{q_{1,2}\phantom{m}}$} (A1_1);
    \path (A0_1) edge [->]node [above] {$\scriptstyle{\widetilde{\Delta}}$} (A0_0);
    \path (A0_1) edge [->]node [auto] {$\scriptstyle{}$} (A0_2);
  \end{tikzpicture}
  \]
where the square is cartesian by Proposition~\ref{prop_gl1}. Moreover, we have the following cartesian diagram
  \[
  \begin{tikzpicture}[xscale=3.4,yscale=-1.8]
    \node (A0_0) at (0, 0) {$\mathcal{K}^{(\beta_1)}\times_{\inertia} \mathcal{K}^{(\beta_2)}$};
    \node (A0_1) at (1, 0) {$\mathcal{K}^{(\beta_1)}\times_k \mathcal{K}^{(\beta_2)}$};
    \node (A1_0) at (0, 1) {$\inertia^{n+1}$};
    \node (A1_1) at (1, 1) {$\inertia^{n+2}$};
    \path (A0_0) edge [->]node [auto] {$\scriptstyle{\widetilde{\Delta}}$} (A0_1);
    \path (A0_0) edge [->]node [left] {$\scriptstyle{\widetilde{e}}$} (A1_0);
    \path (A0_1) edge [->]node [auto] {$\scriptstyle{e_{1,2}}$} (A1_1);
    \path (A1_0) edge [->]node [auto] {$\scriptstyle{\id \times \Delta}$} (A1_1);
  \end{tikzpicture}
  \]
  By
  Proposition~\ref{prop_gl}, \[\gl^!{[\KgnXbk]}^{\virt}=\sum_{\beta_1+ \beta_2=\beta}\Delta^!({[\mathcal{K}_{g_1,n_1+1}(\calX,
    \beta_1)]}^\virt\times {[\mathcal{K}_{g_2,n_2+1}(\calX,
    \beta_2)]}^\virt)\text{.}\] Then we have
\begin{align*}
  \gl^!\Ignb (\underline{\gamma}) &=\gl^!q_*\left(e^*(\underline{\gamma})\cap {[\KgnXbk]}^\virt\right)\\
  &=\widetilde{q}_*\gl^!\left(e^*(\underline{\gamma})\cap {[\KgnXbk]}^\virt\right)\\
  &=\widetilde{q}_*\left(\widetilde{e}^*\pi^*(\underline{\gamma})\cap \gl^!{[\KgnXbk]}^\virt\right)\\
  &=\sum_{\beta_1+ \beta_2=\beta}q_{1,2 *}\widetilde{\Delta}_*\!\left(\widetilde{e}^*(\underline{\gamma}\otimes \id)\cap \Delta^!({[\mathcal{K}_{g_1,n_1+1}(\calX,
    \beta_1)]}^\virt\!\times\!{[\mathcal{K}_{g_2,n_2+1}(\calX,
    \beta_2)]}^\virt)\right)\\
  &=\sum_{\beta_1+ \beta_2=\beta}q_{1,2 *}\left(\widetilde{\Delta}_*\widetilde{e}^*(\underline{\gamma}\otimes \id)\cap ({[\mathcal{K}_{g_1,n_1+1}(\calX,
    \beta_1)]}^\virt\times {[\mathcal{K}_{g_2,n_2+1}(\calX,
    \beta_2)]}^\virt)\right)\\
  &=\sum_{\beta_1+ \beta_2=\beta}q_{1,2 *}\left(e_{1,2}^*(\underline{\gamma}\otimes [\Delta])\cap ({[\mathcal{K}_{g_1,n_1+1}(\calX,
    \beta_1)]}^\virt\times {[\mathcal{K}_{g_2,n_2+1}(\calX,
    \beta_2)]}^\virt)\right)\\
&= \sum_{\beta_1+ \beta_2=\beta} \text{I}_{g_1, n_1 +1, \beta_1}^\calX\otimes\text{I}_{g_2,
  n_2 +1, \beta_2}^\calX (\underline{\gamma} \otimes
[\Delta])\text{.}\qedhere
\end{align*}
\end{proof}
\subsubsection{Genus reduction}
Let $\gl \colon \overline{\mathcal{M}}_{\sfrac{g-1, n + 2}{k}}
\rightarrow \mgnk$ be the natural functor that identifies the last
gerbes.  We have \[\gl^!\Ignb(\bullet) =
\text{I}_{g-1, n+2, \beta}^\calX(\bullet \otimes
[\Delta])\text{,}\] where $\Delta$ is the diagonal in $\inertia^2$ (\ref{delta}).
\begin{proof}
Let us consider the following commutative diagram
  \[
  \begin{tikzpicture}[xscale=4.8,yscale=-2.4]
    \node (A0_0) at (0, 0) {$\mathcal{K}_{g-1,n+2}(\sfrac{\calX}{k}, \beta)$};
    \node (A0_1) at (1.1, 0) {$\mathcal{K}_{g-1,n+2}(\sfrac{\calX}{k}, \beta)\times_{\inertia^2} \inertia$};
    \node (A0_2) at (2, 0) {$\KgnXbk$};
    \node (A1_1) at (1.1, 1) {$\overline{\mathcal{M}}_{\sfrac{g-1, n + 2}{k}}$};
    \node (A1_2) at (2, 1) {$\mgnk$};
    \path (A0_1) edge [->]node [auto] {$\scriptstyle{\widetilde{q}}$} (A1_1);
    \path (A1_1) edge [->]node [auto] {$\scriptstyle{\gl}$} (A1_2);
    \path (A0_2) edge [->]node [auto] {$\scriptstyle{q_n}$} (A1_2);
    \path (A0_0) edge [->]node [below] {$\scriptstyle{q_{n+2}\phantom{m}}$} (A1_1);
    \path (A0_1) edge [->]node [above] {$\scriptstyle{\widetilde{\Delta}}$} (A0_0);
    \path (A0_1) edge [->]node [auto] {$\scriptstyle{}$} (A0_2);
  \end{tikzpicture}
  \]
where the square is cartesian. Moreover, we have the following cartesian diagram
  \[
  \begin{tikzpicture}[xscale=4.8,yscale=-2.4]
    \node (A0_0) at (0, 0) {$\mathcal{K}_{g-1,n+2}(\sfrac{\calX}{k}, \beta)\times_{\inertia^2} \inertia$};
    \node (A0_1) at (1, 0) {$\mathcal{K}_{g-1,n+2}(\sfrac{\calX}{k}, \beta)$};
    \node (A1_0) at (0, 1) {$\inertia^{n+1}$};
    \node (A1_1) at (1, 1) {$\inertia^{n+2}$};
    \path (A0_0) edge [->]node [auto] {$\scriptstyle{\widetilde{\Delta}}$} (A0_1);
    \path (A0_0) edge [->]node [left] {$\scriptstyle{\widetilde{e}}$} (A1_0);
    \path (A0_1) edge [->]node [auto] {$\scriptstyle{e_{(n+2)}}$} (A1_1);
    \path (A1_0) edge [->]node [auto] {$\scriptstyle{\id \times \Delta}$} (A1_1);
  \end{tikzpicture}
  \]
  By
  Proposition~\ref{prop_gl}, \[\gl^!{[\KgnXbk]}^{\virt}=\Delta^!({[\mathcal{K}_{g-1,n+2}(\calX,\beta)]}^\virt)\text{.}\] Then we have
\begin{align*}
\gl^!\Ignb(\underline{\gamma}) &= \gl^!q_{n *}\left(e_{(n)}^*(\underline{\gamma})\cap {[\KgnXbk]}^\virt\right)\\
&= \widetilde{q}_*\gl^!\left(e_{(n)}^*(\underline{\gamma})\cap {[\KgnXbk]}^\virt\right)\\
&= \widetilde{q}_*\left(\widetilde{e}^*\pi^*(\underline{\gamma})\cap \gl^!{[\KgnXbk]}^\virt\right)\\
&= q_{n+2 *}\widetilde{\Delta}_*\left(\widetilde{e}^*(\underline{\gamma}\otimes \id)\cap \Delta^!{[\mathcal{K}_{g-1,n+2}(\calX,\beta)]}^\virt\right)\\
&= q_{n+2 *}\left(\widetilde{\Delta}_*\widetilde{e}^*(\underline{\gamma}\otimes \id)\cap {[\mathcal{K}_{g-1,n+2}(\calX,\beta)]}^\virt\right)\\
&= q_{n+2 *}\left(e_{(n+2)}^*(\underline{\gamma}\otimes [\Delta])\cap {[\mathcal{K}_{g-1,n+2}(\calX,\beta)]}^\virt\right)\\
&=
\text{I}_{g-1, n+2, \beta}^\calX(\underline{\gamma} \otimes
[\Delta])\text{.}\qedhere
\end{align*}
\end{proof}
\section{Genus zero invariants in positive characteristic}\label{section_genus0}
\subsection{Gromov-Witten potential}
Let $\calX$ be a smooth proper tame Deligne-Mumford stack of finite
presentation over an algebraically closed field $k$ (of arbitrary characteristic), admitting a
projective coarse moduli scheme $X$.  Fix
$\beta \in A_1(\sfrac{X}{k})$ and $n \geq 0$.
Let $l$ be a prime different from the characteristic of $k$.
\begin{remark}
  Recall that we defined on the group $H_{\st}^*(\calX)=H^*(\inertia)$
  the following grading
  $H_\st^m(\calX)=\bigoplus_{\Omega}H^{m-2\age(\Omega)}(\Omega)$,
  where the sum is taken over all connected components $\Omega$ of
  $\inertia$.  By \cite{milne} V.1.11, $H_\st^*(\calX)= \sum_r
  H^r(\inertia, \bbQ_l(\overline{r}))$ is finitely generated over
  $\bbQ_l$. Let $T_0=1, T_1, \ldots, T_m$ be generators for
  $H_\st^*(\calX)$. For each $i=1, \ldots, m$, we introduce a variable
  $t_i$ of the same degree of $T_i$, such that the $t_i$ supercommute,
  which means \[t_it_j= {(-1)}^{\deg t_i \deg t_j}t_jt_i\text{,}\] and
  $t_i^2=0$ if $t_i$ has odd degree.
\end{remark}
\begin{remark}
  If $\gamma_i\in H_\st^{m_i}(\calX)$ then $\langle \Inb \rangle
  (\gamma_1\otimes \cdots \otimes \gamma_n) \in \bbQ_l$ is zero unless
  \[\sum_{i+1}^nm_i=2(\dim_k\calX+c_1(T_{\sfrac{\calX}{k}})\cdot \beta)\text{.}\]
\end{remark}
\begin{notat}
We denote the vector $(a_0, \ldots, a_m)$ as $\ua$; we set $|\ua|= a_0
+ \cdots + a_m$ and $\ua!=a_0! \cdots a_m!$. Moreover we set $\langle
\Inb\rangle=0$ for $n<3$.
\end{notat}
\begin{definit}
Let $\gamma = \sum_{i=0}^mt_i T_i$ (regarding $T_i$ and $t_i$ as
supercommuting variables). We define the \emph{genus $0$ Gromov-Witten
  potential} as the formal series \[\Phi (\gamma)=\sum_{n\geq
  0}\sum_{\beta \in A_1(\sfrac{X}{k})}\frac{1}{n!}\langle\Inb\rangle(\gamma^n) q^\beta\text{,}\]
where $q^\beta$ is a free variable of degree $\beta \cdot c_1(T_{\sfrac{\calX}{k}})$ and \[\frac{1}{n!}\langle\Inb\rangle(\gamma^n)=
\sum_{|\ua|=n}\epsilon(\ua)\langle\Inb\rangle(T^{\ua})\frac{t^{\ua}}{\ua!}\text{,}\]
with $\epsilon (\ua)=\pm 1$ determined by \[{(t_0T_0)}^{a_0} \cdots
{(t_mT_m)}^{a_m}= \epsilon (\ua)T_0^{a_0}\cdots
T_m^{a_m}t_0^{a_0}\cdots t_m^{a_m}\text{.}\]
\end{definit}
\begin{remark}
By effectivity axiom, the Gromov-Witten potential is a formal series
in $\calR=R\llbracket t_0, \ldots, t_m\rrbracket$, with $R = \bbQ_l \llbracket
q^\beta;\, \beta \in {A_1(\sfrac{X}{k})}_+\rrbracket$.
\end{remark}
\subsection{Quantum product}
By \cite{milne} VI.8, $H^*(\inertia \times_k \inertia)=
H^*(\inertia)\otimes H^*(\inertia)$. Let $\Delta$ be the diagonal in
$\inertia^2$ (\ref{delta}), then \[[\Delta]= \sum_{e,f}g^{ef}T_e\otimes
T_f\text{.}\]
\begin{definit}
We define \[T_i * T_j = \sum_{e,f}\frac{\partial^3\Phi}{\partial t_i
  \partial t_j \partial t_e}g^{ef}T_f\text{.}\] Extending this
linearly gives the \emph{(big) quantum product} on $H_\st^*(\calX, \calR)$.
\end{definit}
\begin{remark}
Notice that the Gromov-Witten invariants with $n<3$ do not affect the
quantum product.
\end{remark}
\begin{lemma}\label{lemma_d3}
For all $i,j,h$, we have \[\frac{\partial^3\Phi(\gamma)}{\partial t_i \partial
  t_j \partial t_h}= \sum_{n\geq 0}\sum_{\beta\in
  A_1(\sfrac{X}{k})}\frac{1}{n!}\langle\I_{0,n+3,\beta}^\calX\rangle(T_i\otimes
T_j\otimes T_h \otimes \gamma^n) q^\beta\text{.}\]
\end{lemma}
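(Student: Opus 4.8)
The plan is to differentiate the defining formal series for $\Phi$ term by term and to identify each resulting term by multilinearity of $\langle\Inb\rangle$ together with the $S_n$-covariance axiom.

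First I recall that $\Phi(\gamma)=\sum_{n\geq 0}\sum_{\beta}\frac1{n!}\langle\Inb\rangle(\gamma^{n})\,q^{\beta}$ is an element of $\calR=R\llbracket t_0,\ldots,t_m\rrbracket$ and that each $\partial/\partial t_h$ is a continuous super-derivation of $\calR$; hence differentiation commutes with the (formally convergent) double sum, and it suffices to compute $\frac{\partial^{3}}{\partial t_i\partial t_j\partial t_h}$ applied to a single summand $\frac1{n!}\langle\Inb\rangle(\gamma^{n})$ for fixed $n$ and $\beta$. For $n<3$ this summand is homogeneous of degree $n$ in the variables $t_0,\ldots,t_m$ (equivalently it vanishes, by the convention $\langle\Inb\rangle=0$ for $n<3$), so its third derivative is zero; thus only $n\geq 3$ contributes.

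Next I carry out the differentiation for $n\geq 3$. The element $\gamma=\sum_k t_kT_k$ is \emph{even}, since each $t_kT_k$ has even total parity ($t_k$ has the same degree as $T_k$); consequently $\gamma^{n}$ is unambiguous, moving $\gamma$ past a homogeneous element or permuting copies of $\gamma$ produces no sign, and $\partial\gamma/\partial t_h=T_h$. Applying the super-Leibniz rule, and then the $S_n$-covariance axiom (which makes $\langle\Inb\rangle$ graded-symmetric in its arguments) to move the new inputs into the first three slots, three times in succession yields
\[
\frac{\partial^{3}}{\partial t_i\partial t_j\partial t_h}\langle\Inb\rangle(\gamma^{n})
= n(n-1)(n-2)\,\langle\Inb\rangle\bigl(T_i\otimes T_j\otimes T_h\otimes\gamma^{\,n-3}\bigr);
\]
here the Koszul signs accumulated when the (possibly odd) derivations $\partial/\partial t_i,\partial/\partial t_j$ are carried past the coefficients $T_h$ and $T_j$ are exactly the graded-symmetry signs needed to reorder $T_i,T_j,T_h$ into the first three slots, so they cancel and the formula carries no sign. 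Dividing by $n!$ turns the combinatorial factor $n(n-1)(n-2)$ into $1/(n-3)!$. Summing over $n\geq 3$ and over $\beta$, and reindexing $n\mapsto n+3$, gives
\[
\frac{\partial^{3}\Phi(\gamma)}{\partial t_i\partial t_j\partial t_h}
=\sum_{n\geq 0}\sum_{\beta\in A_1(\sfrac{X}{k})}\frac1{n!}\,\langle\I_{0,n+3,\beta}^{\calX}\rangle\bigl(T_i\otimes T_j\otimes T_h\otimes\gamma^{n}\bigr)\,q^{\beta},
\]
which is the claimed identity.

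The main obstacle is precisely the sign bookkeeping in the middle step: one must check that the Koszul signs generated by the odd/even derivations $\partial/\partial t_i,\partial/\partial t_j,\partial/\partial t_h$ acting on the $t$-monomials match the graded-symmetry signs supplied by the $S_n$-covariance axiom, so that the $n(n-1)(n-2)$ contributions add coherently rather than partially cancel. The evenness of $\gamma$ eliminates most potential signs and reduces this to the short sign chase indicated above, entirely parallel to the corresponding genus-zero computation in \cite{AGV08}.
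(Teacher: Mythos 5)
Your proposal is correct and follows essentially the same route as the paper: termwise differentiation of the defining series, with only $n\geq 3$ contributing, the combinatorial factor $n(n-1)(n-2)$ converting $1/n!$ into $1/(n-3)!$, and a reindexing $n\mapsto n+3$ (the paper phrases this via the multi-index expansion $\sum_{|\ua|=n}\langle\Inb\rangle(T^{\ua})t^{\ua}/\ua!$ rather than the Leibniz rule on $\gamma^{n}$, and simply assumes even cohomology to avoid signs, whereas you track them via the evenness of $\gamma$ — a cosmetic difference).
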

\begin{proof}
For simplicity, we will assume that $H_\st^*(\calX, \mathcal{R})$ has only
even cohomology so that we don't have to worry about signs. We have
\[\frac{\partial^3\Phi(\gamma)}{\partial t_i \partial t_j \partial
    t_h} =\frac{\partial^3}{\partial t_i \partial t_j \partial
    t_h}\sum_{n\text{, }\beta}
  \sum_{|\ua|=n}\langle\Inb\rangle(T^{\ua})\frac{t^{\ua}}{\ua!}q^\beta=\sum_{n\text{, }\beta}
  \sum_{|\ua|=n}\langle\Inb\rangle(T^{\ua'})\frac{t^{\ua'}}{\ua'!}q^\beta\text{,}\]
where $\ua'=\ua -e_i-e_j-e_h$ and $e_i=(0,\ldots,0,1,0,\ldots,0)$ with
$1$ in the $i$-th position. Moreover
\begin{align*}
  \sum_{n\text{,
    }\beta}\frac{1}{n!}\langle\I_{0,n+3,\beta}^\calX\rangle(T_i\otimes
  T_j\otimes T_h \otimes \gamma^n) q^\beta &=\sum_{n\text{, }\beta}
  \sum_{|\ua|=n}\langle\I_{0,n+3,\beta}^\calX\rangle(T_i\otimes T_j\otimes
  T_h \otimes T^{\ua})\frac{t^{\ua}}{\ua!}q^\beta\\ &=\sum_{n\text{,
    }\beta}
  \sum_{|\ua|=n+3}\langle\I_{0,n+3,\beta}^\calX\rangle(T^{\ua+e_i+e_j+e_h})\frac{t^{\ua}}{\ua!}q^\beta\text{.}\qedhere
\end{align*}
\end{proof}
\begin{theorem}[WDVV equation]\label{theorem_wdvv}
The Gromov-Witten potential satisfies the
equation \[\sum_{e,f}\frac{\partial^3\Phi}{\partial t_i \partial t_j
  \partial t_e}g^{ef}\frac{\partial^3\Phi}{\partial t_f \partial t_h
  \partial t_l}={(-1)}^{\deg t_i (\deg t_j + \deg
  t_h)}\sum_{e,f}\frac{\partial^3\Phi}{\partial t_j \partial t_h
  \partial t_e}g^{ef}\frac{\partial^3\Phi}{\partial t_f \partial t_i
  \partial t_l}\text{,}\] for all $i,j,h,l$.
\end{theorem}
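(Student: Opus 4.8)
The plan is to deduce the WDVV equation from the Splitting axiom together with the elementary geometry of $\overline{\mathcal{M}}_{\sfrac{0,4}{k}}$, exactly as in characteristic zero; none of the steps is sensitive to the characteristic of $k$. I would start from the fact that $\overline{\mathcal{M}}_{\sfrac{0,4}{k}}\cong\bbP^1_k$, with its three boundary points corresponding to the three ways of splitting the four marked points into two pairs: since any two $k$-points of $\bbP^1_k$ are rationally equivalent, these three boundary classes agree in $A_0(\overline{\mathcal{M}}_{\sfrac{0,4}{k}})$, and hence, via $\cl$, in cohomology. Fixing indices $i,j,h,l$, labelling four distinguished marked points by these indices, and taking for each $n\geq 0$ the flat forgetful morphism $\pi\colon\overline{\mathcal{M}}_{\sfrac{0,n+4}{k}}\to\overline{\mathcal{M}}_{\sfrac{0,4}{k}}$ that remembers the four distinguished points, I would pull the relation $[\,\{i,j\}|\{h,l\}\,]=[\,\{j,h\}|\{i,l\}\,]$ back along $\pi$ to obtain $\sum_{A}[D_A]=\sum_{A'}[D_{A'}]$ in $A_*(\overline{\mathcal{M}}_{\sfrac{0,n+4}{k}})$, where $D_A$ runs over the boundary divisors whose generic point is a two-component curve carrying $i,j$ (and an arbitrary subset of the remaining $n$ markings) on one side and $h,l$ on the other, and $D_{A'}$ similarly with $j,h$ on one side and $i,l$ on the other (each such component carries at least two of the distinguished markings plus the node, so all stability conditions are automatic).

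Each $D_A$ is the image of the representable gluing morphism $\gl_A$ appearing in the Splitting subsection, so the next step is: cap $e^*(T_i\otimes T_j\otimes T_h\otimes T_l\otimes\gamma^{\otimes n})\cap{[\mathcal{K}_{0,n+4}(\sfrac{\calX}{k},\beta)]}^\virt$ with $[D_A]$, push forward to a point, and apply the Splitting axiom — that is, the computation of $\gl_A^!$ of the virtual class through Proposition~\ref{prop_gl} — together with the projection formula. This expresses the $D_A$-contribution as $\sum_{\beta_1+\beta_2=\beta}\sum_{e,f}\langle\text{I}_{0,n_1+3,\beta_1}^\calX\rangle(T_i\otimes T_j\otimes T_e\otimes\gamma^{\otimes n_1})\,g^{ef}\,\langle\text{I}_{0,n_2+3,\beta_2}^\calX\rangle(T_f\otimes T_h\otimes T_l\otimes\gamma^{\otimes n_2})$, summed over the splittings of the $n$ auxiliary markings, with $[\Delta]=\sum_{e,f}g^{ef}T_e\otimes T_f$ inserted at the two points over the node. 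Then I would multiply by $q^\beta$ and the weight $t^{\ua}/\ua!$ and sum over $n$, $\beta$, $\ua$: the count of ways to distribute the auxiliary markings combines with $1/n!$ to give $\tfrac{1}{n_1!}\tfrac{1}{n_2!}$, the sums over $n_1+n_2=n$ and $\beta_1+\beta_2=\beta$ turn into a Cauchy product, and by Lemma~\ref{lemma_d3} its two factors are $\frac{\partial^3\Phi}{\partial t_i\partial t_j\partial t_e}$ and $\frac{\partial^3\Phi}{\partial t_f\partial t_h\partial t_l}$. Summing over $e,f$, the $\pi$-pullback of $[\,\{i,j\}|\{h,l\}\,]$ thus yields the left-hand side of the asserted identity and the pullback of $[\,\{j,h\}|\{i,l\}\,]$ yields the right-hand side; the sign ${(-1)}^{\deg t_i(\deg t_j+\deg t_h)}$ is the Koszul sign produced, via $S_n$-covariance, when $T_i$ is moved past $T_j$ and $T_h$ in passing between the two groupings.

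All the geometric ingredients here — $\overline{\mathcal{M}}_{\sfrac{0,4}{k}}\cong\bbP^1_k$ with rationally equivalent boundary points, flatness of the forgetful morphism, representability and finiteness of the gluing morphisms, and functoriality of the virtual class — hold over an arbitrary field and are established or recalled in the preceding sections, so the real content is combinatorial. The hard part will be precisely this bookkeeping: organizing the sum over the boundary divisors $D_A$ and the distribution of auxiliary markings so that the two Cauchy products of third derivatives of $\Phi$ come out cleanly, and checking that all Koszul signs collapse to the single displayed factor. The Fundamental class and Divisor axioms proved above are what legitimize the resummation when terms of the potential with few marked points, or the forgetting of auxiliary markings, enter.
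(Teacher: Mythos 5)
Your proposal is correct and follows essentially the same route as the paper: pull back the rational equivalence of the three boundary points of $\overline{\mathcal{M}}_{\sfrac{0,4}{k}}\cong\bbP^1_k$ along the forgetful map to $\overline{\mathcal{M}}_{\sfrac{0,n+4}{k}}$, decompose the resulting divisor $D(ij|hl)$ into the pieces $D(A|B)$ indexed by partitions of the auxiliary markings, evaluate each piece by the Splitting axiom with $[\Delta]=\sum g^{ef}T_e\otimes T_f$ inserted at the node, and resum via Lemma~\ref{lemma_d3} into the product of third derivatives. The paper even sidesteps the Koszul-sign bookkeeping you flag as the hard part by assuming only even cohomology.
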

\begin{proof}
For simplicity, we will assume that $H_\st^*(\calX, \mathcal{R})$ has only
even cohomology so that we don't have to worry about signs. 
If we
set \[F(ij|hl)=\frac{\partial^3\Phi}{\partial t_i \partial t_j
  \partial t_e}g^{ef}\frac{\partial^3\Phi}{\partial t_f \partial t_h
  \partial t_l}\text{,}\] then we want to show that
$F(ij|hl)= F(jh|il)$. Consider the following cartesian diagram
  \[
  \begin{tikzpicture}[xscale=3.8,yscale=-2.0]
    \node (A1_0) at (-0.5, 1) {$D(ij|hl)$};
    \node (A1_1) at (0.5, 1) {$\overline{\mathcal{M}}_{\sfrac{0,n+4}{k}}$};
    \node (A2_0) at (-0.5, 2) {$\overline{\mathcal{M}}_{\sfrac{0,\{i,j\}\cup \bullet}{k}}\times_k \overline{\mathcal{M}}_{\sfrac{0,\{h,l\}\cup \bullet}{k}}$};
    \node (A2_2) at (-1.4, 2) {$\spec k$};
    \node (A2_1) at (0.5, 2) {$\overline{\mathcal{M}}_{\sfrac{0,4}{k}}$};
    \path (A2_2) edge [-,double distance=1.5pt]node [auto] {$\scriptstyle{}$} (A2_0);
    \path (A2_0) edge [->]node [auto] {$\scriptstyle{\gl}$} (A2_1);
    \path (A1_0) edge [->]node [auto] {$\scriptstyle{}$} (A1_1);
    \path (A1_0) edge [->]node [auto] {$\scriptstyle{}$} (A2_0);
    \path (A1_1) edge [->]node [auto] {$\scriptstyle{\rho}$} (A2_1);
  \end{tikzpicture}
  \]
where the image of $\gl$ is a boundary point of
$\overline{\mathcal{M}}_{\sfrac{0,4}{k}}\cong \bbP_k^1$. Since the boundary
points are linearly equivalent, the same is true for the fibers of
$\rho$ over these points, hence $D(ij|hl)$ and $D(jh|il)$ are linearly
equivalent divisors in $\overline{\mathcal{M}}_{\sfrac{0,n+4}{k}}$. Let $A \cup
B$ be a partition of $\{1, \ldots, n+4\}$ such that $i,j\in A$ and
$h,l \in B$. Let us set $\overline{\mathcal{M}}_{A,B}=\overline{\mathcal{M}}_{\sfrac{0,A \cup \bullet}{k}}\times_k\overline{\mathcal{M}}_{\sfrac{0,B \cup \bullet}{k}}$ and
form the fiber square
  \[
  \begin{tikzpicture}[xscale=2.4,yscale=-1.6]
    \node (A0_0) at (0, 0) {$D(A|B)$};
    \node (A0_1) at (1, 0) {$D(ij|hl)$};
    \node (A1_0) at (0, 1) {$\overline{\mathcal{M}}_{A,B}$};
    \node (A1_1) at (1, 1) {$\overline{\mathcal{M}}_{\sfrac{0,n+4}{k}}$};
    \path (A0_0) edge [->]node [auto] {$\scriptstyle{}$} (A0_1);
    \path (A1_0) edge [->]node [auto] {$\scriptstyle{\gl}$} (A1_1);
    \path (A0_1) edge [->]node [auto] {$\scriptstyle{}$} (A1_1);
    \path (A0_0) edge [->]node [auto] {$\scriptstyle{}$} (A1_0);
  \end{tikzpicture}
  \]
  then $D(ij|hl)= \bigsqcup_{\substack{A\cup B = \{1, \ldots, n+4\}\\
      i,j \in A\text{, }h,l\in B}}D(A|B)$. We set
  \[\overline{\mathcal{K}}^{(\beta_1,\beta_2)}=\mathcal{K}_{0,A\cup
    \bullet}(\sfrac{\calX}{k}, \beta_1)\times_k\mathcal{K}_{0,B\cup
    \bullet}(\sfrac{\calX}{k}, \beta_2)\text{.}\] Let us set for
  simplicity $\gamma_{n_1}= T_i\otimes T_j\otimes \gamma^{n_1}$ and
  $\gamma_{n_2}= T_h\otimes T_l\otimes \gamma^{n_2}$. Then, by
  Lemma~\ref{lemma_d3} and splitting axiom,
\begin{align*}
F(ij|hl)&= \sum_{\beta_1, \beta_2,n_1,
  n_2,e,f}\frac{1}{n_1!n_2!}\langle\I_{0,n_1+3,\beta_1}^\calX\rangle(T_e\otimes
\gamma_{n_1})g^{ef}\langle\I_{0,n_2+3,\beta_2}^\calX\rangle(T_f\otimes \gamma_{n_2}) q^{\beta_1+\beta_2}\\ &= \sum_{\beta,
  n}\sum_{\substack{\beta_1+ \beta_2=\beta\\ n_1
    +n_2=n}}\sum_{e,f}\frac{1}{n_1!n_2!}\int_{{[\mathcal{K}^{(\beta_1,
        \beta_2)}]}^\virt}g^{e,f}e_{(1,2)}^*(T_e \otimes \gamma_{n_1}\otimes T_f\otimes
\gamma_{n_2}) q^{\beta}\\ &= \sum_{\beta, n}\sum_{\substack{\beta_1+
    \beta_2=\beta\\ n_1
    +n_2=n}}\frac{1}{n_1!n_2!}\int_{{[\mathcal{K}^{(\beta_1,
        \beta_2)}]}^\virt}e_{(1,2)}^*(\gamma_{n_1}\otimes [\Delta]\otimes \gamma_{n_2}) q^{\beta}\\
&=
\sum_{\beta, n}\sum_{\substack{A \cup B =
    \{1, \ldots, n+4\}\\ i,j \in A \text{, }h,l \in
    B}}\frac{1}{n!}\int_{\overline{\mathcal{M}}_{A,B}}\gl^!\I_{0,n+4,\beta}^\calX(T_i\otimes
T_j\otimes T_h\otimes T_l\otimes \gamma^{n}) q^{\beta}\\
&=
\sum_{\beta, n}\sum_{\substack{A \cup B =
    \{1, \ldots, n+4\}\\ i,j \in A \text{, }h,l \in
    B}}\frac{1}{n!}\int_{D(A|B)}\I_{0,n+4,\beta}^\calX(T_i\otimes
T_j\otimes T_h\otimes T_l\otimes \gamma^{n}) q^{\beta}\\
&=
\sum_{\beta, n}\frac{1}{n!}\int_{D(ij|hl)}\I_{0,n+4,\beta}^\calX(T_i\otimes
T_j\otimes T_h\otimes T_l\otimes \gamma^{n}) q^{\beta}\text{.}
\end{align*}
Since $D(ij|hl)$ and $D(jh|il)$ are linerly equivalent, it follows that $F(ij|hl)=F(jh|il)$.
\end{proof}
\begin{prop}\label{prop_*}
The quantum product is supercommutative with identity $T_0$ and associative.
\end{prop}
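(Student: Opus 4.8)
The plan is to deduce the three assertions from facts already in hand: supercommutativity from the supercommutativity of the formal variables $t_i$ (equivalently, from the $S_n$-covariance axiom), the identity property from the fundamental class axiom, and associativity from the WDVV equation of Theorem~\ref{theorem_wdvv}. As in the proofs of Lemma~\ref{lemma_d3} and Theorem~\ref{theorem_wdvv}, I carry out the bookkeeping assuming $H_\st^*(\calX,\calR)$ is concentrated in even degree; the general case differs only by the Koszul signs that the definitions have been arranged to accommodate. Throughout, write $\partial_i$ for $\partial/\partial t_i$.

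\emph{Supercommutativity.} By definition $T_i * T_j = \sum_{e,f}(\partial_i\partial_j\partial_e\Phi)\,g^{ef}T_f$, and since the $t_i$ supercommute the mixed third partials of $\Phi$ satisfy $\partial_i\partial_j\partial_e\Phi = (-1)^{\deg t_i\deg t_j}\,\partial_j\partial_i\partial_e\Phi$; hence $T_i * T_j = (-1)^{\deg t_i\deg t_j}\,T_j * T_i$. (Via Lemma~\ref{lemma_d3} this is precisely the $S_n$-covariance axiom applied to the invariants appearing in $\partial_i\partial_j\partial_e\Phi$.)

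\emph{Identity.} I compute $T_0 * T_j = \sum_{e,f}(\partial_0\partial_j\partial_e\Phi)\,g^{ef}T_f$ and claim $\partial_0\partial_j\partial_e\Phi = g_{je}$, where $g_{ef}=\int_{\inertia}\tfrac1{\rank}T_e\cup\iota^*T_f$ is the orbifold Poincar\'e pairing, for which $[\Delta]=\sum_{e,f}g^{ef}T_e\otimes T_f$ is by construction the inverse, so that $\sum_e g_{je}g^{ef}=\delta_j^f$. By Lemma~\ref{lemma_d3}, $\partial_0\partial_j\partial_e\Phi = \sum_{n,\beta}\tfrac1{n!}\langle\I_{0,n+3,\beta}^\calX\rangle(T_0\otimes T_j\otimes T_e\otimes\gamma^n)\,q^\beta$, and $T_0=1$ is the fundamental class insertion $\id$ (of degree $0$, hence movable to the last slot by $S_n$-covariance with no sign). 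For $(n,\beta)\neq(0,0)$ the first part of the fundamental class axiom writes $\I_{0,n+3,\beta}^\calX(T_j\otimes T_e\otimes\gamma^n\otimes\id)$ as a pullback along the point-forgetting morphism $\phi\colon\overline{\mathcal{M}}_{\sfrac{0,n+3}{k}}\to\overline{\mathcal{M}}_{\sfrac{0,n+2}{k}}$, whose fibres are one-dimensional, so the projection formula gives $\int_{\overline{\mathcal{M}}_{\sfrac{0,n+3}{k}}}\phi^*(\,\cdot\,)=0$; and the term $n=0,\beta=0$ is, by the second part of the axiom, exactly $\int_{\inertia}\tfrac1{\rank}T_j\cup\iota^*T_e=g_{je}$. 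Hence $T_0 * T_j = T_j$, and $T_j * T_0 = T_j$ follows by supercommutativity.

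\emph{Associativity.} Expanding the quantum product twice, the coefficient of $T_b$ in $(T_i * T_j)* T_h$ is $\sum_{e,f,a}(\partial_i\partial_j\partial_e\Phi)\,g^{ef}(\partial_f\partial_h\partial_a\Phi)\,g^{ab}$, and the coefficient of $T_b$ in $T_i * (T_j * T_h)$ is, after reordering by the supercommutativity just established, $\sum_{e,f,a}(\partial_j\partial_h\partial_e\Phi)\,g^{ef}(\partial_f\partial_i\partial_a\Phi)\,g^{ab}$. Contracting the WDVV equation of Theorem~\ref{theorem_wdvv} against $g^{lb}$ and summing over $l$ — legitimate since $(g^{ef})$ is invertible — while absorbing the sign $(-1)^{\deg t_i(\deg t_j+\deg t_h)}$ into that reordering, yields $(T_i * T_j)* T_h = T_i * (T_j * T_h)$ for all $i,j,h$, and hence for all of $H_\st^*(\calX,\calR)$ by $\calR$-bilinearity. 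The only input beyond the axioms already proved is the vanishing $\int_{\overline{\mathcal{M}}_{\sfrac{0,n+3}{k}}}\phi^*(\,\cdot\,)=0$, immediate from the projection formula; the one genuinely delicate point is the sign bookkeeping in the non-even case, and all signs disappear in the even case.
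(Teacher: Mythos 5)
Your proposal is correct and follows essentially the same route as the paper: $S_n$-covariance gives supercommutativity, the fundamental class axiom gives the identity, and contracting WDVV against $g^{ab}$ gives associativity. The only cosmetic difference is in the identity step, where you justify the vanishing of the $(n,\beta)\neq(0,0)$ terms by the projection formula along the forgetful map and quote orbifold Poincar\'e duality ($\sum_e g_{je}g^{ef}=\delta_j^f$) as known, whereas the paper derives that duality inline from the K\"unneth decomposition of $[\Delta]$ and simply asserts the vanishing.
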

\begin{proof}
By Lemma~\ref{lemma_d3} and $S_n$-covariance axiom,
\begin{align*}
T_i * T_j &= \sum_{\beta, n,
  e,f}\frac{1}{n!}\langle\I_{0,n+3,\beta}^\calX\rangle(T_i\otimes
T_j\otimes T_e \otimes \gamma^n) g^{ef}T_fq^\beta\\
 &= \sum_{\beta, n,
  e,f}\frac{1}{n!}{(-1)}^{\deg T_i \deg T_j}\langle\I_{0,n+3,\beta}^\calX\rangle(T_j\otimes
T_i\otimes T_e \otimes \gamma^n) g^{ef}T_fq^\beta\\
&= {(-1)}^{\deg T_i \deg T_j}T_j * T_i\text{.}
\end{align*}
Let $\Delta \colon \inertia \rightarrow \inertia^2$ be the diagonal (\ref{delta})
and let $p_i\colon \inertia^2\rightarrow \inertia$ be the natural
projections for $i=1,2$. By the fundamental class axiom,
\begin{align*}
T_i &= p_{2 *}\Delta_*(\Delta^!p_1^*\iota^*(T_i))\\
&= p_{2 *}(p_1^*\iota^*(T_i)\cup [\Delta])\\
&= \sum_{e,f}g^{ef}p_{2 *}((\iota^*(T_i)\otimes T_0)\cup (T_e\otimes T_f))\\
&= \sum_{e,f}g^{ef}p_{2 *}((\iota^*(T_i)\cup T_e)\otimes T_f)\\
&= \sum_{e,f}\langle \I_{0,3,0}^\calX \rangle (T_0
\otimes T_i \otimes T_e)g^{ef}T_f\text{.}
\end{align*}
Moreover, we have $\langle\I_{0,n+3,\beta}^\calX\rangle(\bullet \otimes
T_0)=0$ unless $\beta=0$ and $n=3$. Therefore
\begin{align*}
T_0 * T_i &= \sum_{\beta, n,
  e,f}\frac{1}{n!}\langle\I_{0,n+3,\beta}^X\rangle(T_0\otimes
T_i\otimes T_e \otimes \gamma^n) g^{ef}T_fq^\beta\\
 &= \sum_{e,f}\langle\I_{0,3,0}^X\rangle(T_0\otimes
T_i\otimes T_e) g^{ef}T_f= T_i\text{.}
\end{align*}
Finally, we prove that the quantum product is associative.  For
simplicity, we will assume that $H_\st^*(\calX, \mathcal{R})$ has only even
cohomology so that we don't have to worry about signs. We have \[(T_i
* T_j) * T_h = \sum_{e,f}\frac{\partial^3\Phi}{\partial t_i \partial
  t_j \partial t_e}g^{ef}T_e * T_h=
\sum_{c,d,e,f}\frac{\partial^3\Phi}{\partial t_i \partial t_j \partial
  t_e}g^{ef}\frac{\partial^3\Phi}{\partial t_f \partial t_h \partial
  t_c}g^{cd}T_d\] and \[T_i * (T_j * T_h) = {(-1)}^{\deg T_i (\deg T_j
  + \deg T_h)}(T_j * T_h)*T_i\text{,}\] since the quantum product is
supercommutative. Therefore, associativity follows from
Theorem~\ref{theorem_wdvv}.
\end{proof}
\subsection{Reconstruction for genus zero Gromov-Witten invariants.}
\begin{theorem}\label{theorem_reconstr}
  If $H_\st^*(\calX)$ is generated by $H_\st^2(\calX)$ then every
  genus zero Gromov-Witten invariant can be uniquely reconstructed
  starting with the following system of Gromov-Witten
  invariants \[\insieme{\I_{0,3,\beta}^\calX(\gamma_1 \otimes
    \gamma_2\otimes \gamma_3)}{\beta \cdot c_1(T_{\sfrac{\calX}{k}})\leq
    \dim_k \calX +1\text{, }\deg \gamma_3=2}\text{.}\]
\end{theorem}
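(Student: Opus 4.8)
The plan is to adapt the classical reconstruction argument of Kontsevich–Manin (as presented in Manin's book, and carried out in the orbifold setting in the appendix of \cite{AGV08}) to the present mixed-characteristic situation. The key point is that all the tools we need — the WDVV equation (Theorem~\ref{theorem_wdvv}), the divisor axiom, the fundamental class axiom, the splitting axiom, and the string/dilaton-type relations encoded in the fundamental class axiom — have been established above without any hypothesis on the characteristic of $k$. So the argument is purely formal once one has these axioms, and the proof consists in reproducing that formal argument.

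First I would introduce, following \cite{AGV08}, the \emph{first reconstruction step}: using the divisor axiom one reduces any invariant $\langle\I_{0,n,\beta}^\calX\rangle(\gamma_1\otimes\cdots\otimes\gamma_n)$ in which one of the insertions, say $\gamma_n$, lies in $H_\st^2(\calX)$ and comes from $H^2(X)$, to $(\beta\cdot\gamma_n)\langle\I_{0,n-1,\beta}^\calX\rangle(\gamma_1\otimes\cdots\otimes\gamma_{n-1})$, provided $n-1\geq 3$; combined with the fundamental class axiom this handles the cases $n\leq 3$ as well. Since by hypothesis $H_\st^*(\calX)$ is generated as a ring by $H_\st^2(\calX)$, every cohomology insertion is a polynomial in degree-$2$ classes, and one uses multilinearity together with the splitting axiom to trade a product $\gamma\cdot\gamma'$ in one insertion for a sum over $\beta_1+\beta_2=\beta$ of products of invariants with that insertion split into $\gamma$ and $\gamma'$ on two separate marked points (this is exactly where WDVV enters, via the identification of the two boundary divisor classes $D(ij|hl)$ and $D(jh|il)$ on $\overline{\mathcal{M}}_{0,n}$). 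The combinatorial bookkeeping — an induction on $n$ and on $\beta\cdot c_1(T_{\sfrac{\calX}{k}})$, showing that each such move strictly decreases a suitable complexity measure — is the heart of the argument and I would state it as a lemma, with the induction organized so that the base case is precisely the listed system $\insieme{\I_{0,3,\beta}^\calX(\gamma_1\otimes\gamma_2\otimes\gamma_3)}{\beta\cdot c_1(T_{\sfrac{\calX}{k}})\leq\dim_k\calX+1,\ \deg\gamma_3=2}$. The bound $\beta\cdot c_1(T_{\sfrac{\calX}{k}})\leq\dim_k\calX+1$ appears because of the grading axiom: for $n=3$ the invariant $\langle\I_{0,3,\beta}^\calX\rangle$ is forced to vanish unless the degrees of the three insertions add up to $2(\dim_k\calX+c_1(T_{\sfrac{\calX}{k}})\cdot\beta)$, and since each insertion has (real) degree at most $2\dim_k\calX + (\text{bounded age shift})$, only finitely many $\beta$ — those with $\beta\cdot c_1(T_{\sfrac{\calX}{k}})$ below the stated bound — can contribute when one of the insertions is constrained to lie in $H_\st^2$.

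The main obstacle I anticipate is not conceptual but is the careful verification that the WDVV-driven recursion actually \emph{terminates} and that the reconstruction is \emph{unique}: one must exhibit a well-ordering of the set of invariants (for instance, order first by $\beta\cdot c_1(T_{\sfrac{\calX}{k}})$, then by $n$, then by the number of insertions not already in $H_\st^2$) and check that every application of the divisor axiom, the fundamental class axiom, and WDVV expresses a given invariant in terms of strictly smaller ones, \emph{with coefficients that do not depend on any unknown invariant}. For uniqueness one observes that the whole scheme is deterministic: once the listed seed invariants are fixed, each step computes a new invariant with no freedom. A secondary technical point worth a remark is that everything takes place with $\bbQ_l$-coefficients in $l$-adic cohomology rather than with $\bbQ$-coefficients in singular cohomology, but since the axioms above are all formulated and proved in that setting (and the Poincaré pairing $g^{ef}$, the diagonal class $[\Delta]=\sum_{e,f}g^{ef}T_e\otimes T_f$, and the Künneth decomposition are all available by \cite{milne}), no change to the classical argument is needed. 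I would close by remarking that combined with Theorem~\ref{theorem_definv} this yields, for a smooth proper tame Deligne-Mumford stack over a Dedekind domain whose cohomology is generated in degree $2$, a reconstruction of all genus-zero invariants of the special fibers from those of the generic fiber.
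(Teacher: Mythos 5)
Your plan is correct and follows essentially the same route as the paper: the Kontsevich--Manin first reconstruction argument, using the WDVV equation to solve for the invariant whose last insertion is a cup product $\delta'\cup\delta$ in terms of invariants that are lower in a well-ordering by degree class, number of markings, and degree of the remaining insertions, with the divisor axiom reducing $n$ whenever an insertion has degree $2$ and the grading axiom forcing the bound $\beta\cdot c_1(T_{\sfrac{\calX}{k}})\leq\dim_k\calX+1$ on the seed three-point invariants. The combinatorial induction you defer to a lemma is exactly what the paper's proof carries out, so there is no substantive difference in approach.
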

\begin{proof}
Apply the WDVV equation
(Theorem~\ref{theorem_wdvv}) to $\gamma_1\otimes \cdots \otimes \gamma_{n+1}$
with indeces $\{i,j,h,l\}=\{1,2,n,n+1\}$. Let us define a partial
order on pairs $(\beta, n)$, with $n \geq 3$ and $\beta\in {A_1(\sfrac{X}{k})}_+$, by setting $(\beta, n)>(\beta', n')$ if and only if either $\beta=
\beta' + \beta''$ or $\beta= \beta'$ and $n > n'$. Then there are four
terms of higher order in the WDVV equation each of the
form \[\I_{a,b}=\sum_{e,f}\langle\I_{0,3,0}^\calX\rangle(\gamma_a\otimes
\gamma_b\otimes T_e)g^{ef}\langle \I_{0,n-1,\beta}^\calX\rangle(T_f
\otimes (\otimes_{s \neq a,b}\gamma_s))\text{,}\] with $(a,b) \in
\{(1,2),(n,n+1),(2,n),(1,n+1)\}$. As shown in the proof of
Proposition~\ref{prop_*}, we have
\[\gamma_a \cup \gamma_b =\sum_{e,f}\langle \I_{0,3,0}^\calX \rangle (\gamma_a \otimes \gamma_b
\otimes T_e)g^{ef}T_f\text{,}\] hence $\I_{a,b}= \langle
\I_{0,n-1,\beta}^\calX\rangle (\gamma_a\cup \gamma_b \otimes (\otimes_{s
  \neq a,b}\gamma_s))$. Let us consider
$\langle\I_{0,n,\beta}^\calX\rangle(\gamma_1 \otimes \cdots \otimes
\gamma_n)$. If $\deg \gamma_n=2$, then we can apply divisor axiom to
reduce $n$. Otherwise, since $H_\st^*(\calX)$ is generated by $H_\st^2(\calX)$, we can
write $\gamma_n=\sum_i \delta_i'\cup \delta_i$, with $\deg
\delta_i=2$. By linearity, we can assume $\gamma_n=\delta' \cup
\delta$, with $\deg \delta=2$. Apply the construction above with
$\gamma_n=\delta'$ and $\gamma_{n+1}=\delta$. Then, by WDVV equation,
we get
\begin{align*}
\pm \langle \I_{0,n-1,\beta}^\calX\rangle (\gamma_1\cup \gamma_2 \otimes
\gamma_3 &\otimes \cdots \gamma_{n-1}\otimes \delta' \otimes \delta)\pm
\langle \I_{0,n-1,\beta}^\calX\rangle (\gamma_1 \otimes \cdots \otimes
\gamma_{n-1}\otimes \delta' \cup\delta)\\ \pm \langle
\I_{0,n-1,\beta}^\calX\rangle (\gamma_1\cup \delta \otimes \gamma_2
\otimes &\cdots \gamma_{n-1}\otimes \delta')\pm \langle
\I_{0,n-1,\beta}^\calX\rangle (\gamma_1\otimes \gamma_2\cup \delta'
\otimes \gamma_ \otimes \cdots \gamma_{n-1}\otimes \delta)=
\\ &=\text{a combination of higher order terms.}
\end{align*}
By divisor axiom, the first and the fourth summands are lifted from
$\overline{\mathcal{M}}_{\sfrac{0,n-1}{k}}$. Moreover in the third summand we
have $\deg \delta' < \deg \gamma_n$. If $\deg \delta'=2$ then, by
divisor axiom, we can reduce $n$, otherwise we repeat this trick and
in a finite number of iterations we will reduce $n$.
Finally, we can apply the procedure described above to
$\langle\I_{0,3,\beta}^\calX\rangle(\gamma_1\otimes\gamma_2\otimes\gamma_3)$
and decrease $\deg \gamma_3\geq 2$.
\end{proof}

{\it E-mail address:} {\tt flavia.poma@gmail.com}

\end{document}